\newtheorem{theorem}{Theorem}[section]
\newtheorem*{theorem*}{Main Theorem}
\newtheorem{corollary}[theorem]{Corollary}
\newtheorem{lemma}[theorem]{Lemma}
\newtheorem{proposition}[theorem]{Proposition}
\newtheorem{definition-proposition}[theorem]{Definition-Proposition}
\theoremstyle{definition}
\newtheorem{definition}[theorem]{Definition}
\newtheorem{remark}[theorem]{Remark}
\newtheorem*{question*}{Question}
\newtheorem{question**}{Question}
\newtheorem*{conjecture*}{Conjecture}
\newtheorem{example}[theorem]{Example}
\newtheorem{claim}{Claim}
\newtheorem*{claim*}{Claim}
\newtheorem{mainthm}{Theorem}
\def\Mor{\operatorname{Mor}}\def\Ob{\operatorname{Ob}}
\def\Hom{\operatorname{Hom}}
\def\mod{\operatorname{\mathsf{mod}}}
\def\Ab{\mathsf{Ab}}
\def\add{\operatorname{\mathsf{add}}}
\newcommand{\op}{\mathsf{op}}
\newcommand{\A}{\mathcal{A}}
\newcommand{\C}{\mathcal{C}}\newcommand{\D}{\mathcal{D}}
\renewcommand{\H}{\mathcal{H}}
\renewcommand{\L}{\mathcal{L}}\newcommand{\M}{\mathcal{M}}\newcommand{\N}{\mathcal{N}}
\newcommand{\R}{\mathcal{R}}
\renewcommand{\S}{\mathcal{S}}
\newcommand{\U}{\mathcal{U}}\newcommand{\V}{\mathcal{V}}
\newcommand{\W}{\mathcal{W}}\newcommand{\X}{\mathcal{X}}
\newcommand{\End}{\operatorname{End}}
\newcommand{\Ker}{\operatorname{Ker}}
\renewcommand{\Im}{\operatorname{Im}}
\newcommand{\id}{\mathsf{id}}
\newcommand{\xto}{\xrightarrow}
\newcommand{\Sn}{\mathscr{S}_\N}
\def\cone{\operatorname{\mathsf{Cone}}}
\def\cocone{\operatorname{\mathsf{CoCone}}}
 \newcommand{\lra}{\longrightarrow}    
 \newcommand{\dra}{\dashrightarrow}    
\begin{document}
\setlength{\baselineskip}{15pt}
\title[Localization of triangulated categories]{Localization of triangulated categories with respect to extension-closed subcategories}
\author{Yasuaki Ogawa}
\email{ogawa.yasuaki.gh@cc.nara-edu.ac.jp} %
\address{Center for Educational Research of Science and Mathematics, Nara University of Education, Takabatake-cho, Nara, 630-8528, Japan}
\keywords{Verdier localization, Triangulated category, Extriangulated category, $t$-structure, Cotorsion pair}
\begin{abstract}
The aim of this paper is to develop a framework for localization theory of triangulated categories $\C$, that is, from a given extension-closed subcategory $\N$ of $\C$, we construct a natural extriangulated structure on $\C$ together with an exact functor $Q:\C\to\widetilde{\C}_\N$ satisfying a suitable universality, which unifies several phenomena.
Precisely, a given subcategory $\N$ is thick if and only if the localization $\widetilde{\C}_\N$ corresponds to a triangulated category. In this case, $Q$ is nothing other than the usual Verdier quotient.
Furthermore, it is revealed that $\widetilde{\C}_\N$ is an exact category if and only if $\N$ satisfies a generating condition $\cone(\N,\N)=\C$.
Such an (abelian) exact localization $\widetilde{\C}_\N$ provides a good understanding of some cohomological functors $\C\to\Ab$, e.g., the heart of $t$-structures on $\C$ and the abelian quotient of $\C$ by a cluster-tilting subcategory $\N$.
\end{abstract}
\maketitle

\tableofcontents

\section*{Introduction}\label{sec_intro}
The triangulated category was introduced by Grothendieck and Verdier to formulate the derived category through a more conceptual construction, what we call, the Verdier quotient \cite{Ver96}, which has had an impact not only on the representation theory but also the algebraic geometry and many other mathematical fields.
The Verdier quotient is a triangulated analogue of the Serre quotient of abelian categories \cite{Gab62}.
Both of them are formulated as the Gabriel-Zisman localization with respect to the class $\mathscr{S}_\N$ of morphisms determined by a given thick/Serre subcategory $\N$ such that $\mathscr{S}_\N$ forms a multiplicative system, and thus the localization $\C[\Sn^{-1}]$ becomes easy to handle \cite{GZ67}.
Recently, the notion of extriangulated category was introduced by Nakaoka and Palu \cite{NP19} as a unification of triangulated and exact categories.
The localization theory of extriangulated categories was developed in the pursuit of unification of Verdier/Serre quotients \cite{NOS21},  which tells us that an exact functor $(Q,\mu)\colon (\C,\mathbb{E},\mathfrak{s})\to (\widetilde{\C},\widetilde{\mathbb{E}},\widetilde{\mathfrak{s}})$ of extriangulated categories with suitable universality can be established from an extriangulated category $(\C,\mathbb{E},\mathfrak{s})$ and a certain thick subcategory $\N\subseteq \C$.
The aim of this article is, as an application of such extriangulated localizations, to construct a new framework for localization theory of triangulated categories $\C$. That is, from a given extension-closed subcategory $\N\subseteq\C$, we construct an ``exact'' functor $Q\colon \C\to\widetilde{\C}_\N$ enjoying expected properties.
More precisely, our main results are summarized as follows.

\begin{mainthm}[Theorems~\ref{thm_main1}, \ref{cor_tri}, \ref{cor_exact}]\label{mainthm}
Let $\C$ be a triangulated category and regard it as a natural extriangulated category $(\C,\mathbb{E},\mathfrak{s})$.
Assume that a full subcategory $\N$ of $\C$ is closed under taking extensions, direct summands and isomorphisms.
\begin{enumerate}
\item[\textnormal{(0)}]
The subcategory $\N$ naturally defines a relative extriangulated structure $(\C,\mathbb{E}_\N,\mathfrak{s}_\N)$.
\item[\textnormal{(1)}]
The subcategory $\N$ is thick with respect to the relative structure $(\C,\mathbb{E}_\N,\mathfrak{s}_\N)$.
Moreover, we have an extriangulated localization $(Q,\mu)\colon (\C,\mathbb{E}_\N,\mathfrak{s}_\N)\to (\widetilde{\C}_\N,\widetilde{\mathbb{E}}_\N,\widetilde{\mathfrak{s}}_\N)$.
\item[\textnormal{(2)}]
The subcategory $\N$ is thick in the triangulated category $(\C,\mathbb{E},\mathfrak{s})$ if and only if it is biresolving with respect to the relative structure $(\C,\mathbb{E}_\N,\mathfrak{s}_\N)$ if and only if the resulting category $(\widetilde{\C}_\N,\widetilde{\mathbb{E}}_\N,\widetilde{\mathfrak{s}}_\N)$ is triangulated.
In this case, the localization $(Q,\mu)$ is nothing but the Verdier quotient.
\item[\textnormal{(3)}]
Suppose $\N$ to be functorially finite.
Then, $\N$ satisfies $\cone(\N,\N)=\C$ in the triangulated category $(\C,\mathbb{E},\mathfrak{s})$ if and only if $\N$ is Serre with respect to the relative structure $(\C,\mathbb{E}_\N,\mathfrak{s}_\N)$ if and only if the resulting category $\widetilde{\C}_\N$ is abelian.
Furthermore, the functor $Q\colon (\C,\mathbb{E},\mathfrak{s})\to\widetilde{\C}_\N$ from the original triangulated category is cohomological.
\end{enumerate}
\begin{table}[h]
\centering
  \begin{tabular}{|c|c:c:c|l|}  \hline
    \multirow{2}{*}{$\N$} & extension-closed & thick  & $\cone(\N,\N)=\C$ &in $(\C,\mathbb{E},\mathfrak{s})$\\ \cline{2-5} 
      & thick & biresolving & Serre &in $(\C,\mathbb{E}_\N,\mathfrak{s}_\N)$\\ \hline
     $\cellcolor[gray]{0.9}\widetilde{\C}_\N$ & \cellcolor[gray]{0.9}extriangulated & \cellcolor[gray]{0.9}\ \ triangulated\ \ & \cellcolor[gray]{0.9} abelian &\diagbox[width=2.5cm]{\ }{\ }\\ \cline{1-5}
  \end{tabular}
\end{table}
\end{mainthm}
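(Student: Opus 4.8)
The plan is to derive all of (0)--(3) from the general localization theorem for extriangulated categories \cite{NOS21}, applied to a relative substructure of $\C$ attached to $\N$. For (0) and (1), I would define $(\C,\mathbb{E}_\N,\mathfrak{s}_\N)$ by declaring an $\mathfrak{s}$-triangle $A\xrightarrow{x}B\xrightarrow{y}C\xrightarrow{\delta}\Sigma A$ to be an $\mathfrak{s}_\N$-conflation precisely when $x$ is $\N$-monic (every $A\to N$ with $N\in\N$ factors through $x$) and $y$ is $\N$-epic (every $N\to C$ with $N\in\N$ factors through $y$); equivalently $\mathbb{E}_\N(C,A)=\{\delta\in\mathbb{E}(C,A)\mid \beta_\ast\delta=0$ and $\gamma^\ast\delta=0$ for all $\beta\colon A\to N$, $\gamma\colon N\to C$ with $N\in\N\}$. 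Additivity of this subbifunctor is immediate; that it is \emph{closed}, so that $(\C,\mathbb{E}_\N,\mathfrak{s}_\N)$ is again extriangulated, reduces to checking (ET4) and (ET4${}^{\op}$) by an octahedral diagram chase, and this is precisely where extension-closedness of $\N$ is used. This is (0). For (1), note that an $\N$-monic (resp.\ $\N$-epic) morphism between two objects of $\N$ is automatically split, by applying the defining surjectivity with $N$ the source (resp.\ target); hence in an $\mathfrak{s}_\N$-conflation two of whose terms lie in $\N$ the third does too --- the two split cases, together with extension-closedness for $A,C\in\N\Rightarrow B\in\N$ --- so $\N$ is thick in $(\C,\mathbb{E}_\N,\mathfrak{s}_\N)$, and the localization theorem of \cite{NOS21}, whose hypotheses are readily verified against the explicit $\mathfrak{s}_\N$, produces $(Q,\mu)$.

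For (2), I would unwind both conditions against the relative structure and show directly that $\N$ is thick in $(\C,\mathbb{E},\mathfrak{s})$, i.e.\ a thick triangulated subcategory, if and only if it is biresolving in $(\C,\mathbb{E}_\N,\mathfrak{s}_\N)$: in the forward direction one fabricates the required $\mathfrak{s}_\N$-(co)resolutions of an arbitrary object from closure of $\N$ under $\Sigma^{\pm1}$ and cones, and the converse recovers exactly those closure properties. Granting this, the morphisms inverted by $Q$ coincide with $\{f\mid \cone(f)\in\N\}$, so $(Q,\mu)$ and the Verdier quotient $\C\to\C/\N$ are Gabriel--Zisman localizations at the same class and are therefore identified, together with their triangulated structures. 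This part is essentially formal once the dictionary established for (1) is in hand.

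For (3), the core, assume $\N$ extension-closed and functorially finite. The decisive equivalence is $\cone(\N,\N)=\C$ if and only if $\N$ is Serre in $(\C,\mathbb{E}_\N,\mathfrak{s}_\N)$. Functorial finiteness supplies, for every $X$, left and right $\N$-approximations; combining their cones and cocones with the hypothesis $\cone(\N,\N)=\C$ through the octahedral axiom yields for each $X$ a genuine $\mathfrak{s}_\N$-conflation both of whose outer terms lie in $\N$, and it is this presentation that converts ``generation by cones over $\N$'' into the Serre closure property (closedness under $\mathfrak{s}_\N$-subobjects and $\mathfrak{s}_\N$-quotients); conversely the Serre property produces such conflations and one reads off $\cone(\N,\N)=\C$. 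Once $\N$ is Serre, functorial finiteness furnishes enough $\mathfrak{s}_\N$-projectives and $\mathfrak{s}_\N$-injectives, and the extriangulated analogue of ``a Serre quotient of an abelian category is abelian'' upgrades the exact quotient of the abstract to the abelian category $\widetilde{\C}_\N$. Finally, to see that $Q\colon(\C,\mathbb{E},\mathfrak{s})\to\widetilde{\C}_\N$ is cohomological, take a genuine triangle $A\to B\to C\to\Sigma A$; using the $\mathfrak{s}_\N$-conflations with ends in $\N$ built above and the octahedral axiom, decompose it into a finite string of $\mathfrak{s}_\N$-conflations, apply the exact functor $Q$ (which annihilates $\N$) to obtain short exact sequences in $\widetilde{\C}_\N$, and splice them --- using naturality of $\mu$ to match the connecting morphisms --- into the desired long exact sequence.

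I expect the main obstacles to be twofold. First, verifying that $\mathbb{E}_\N$ is a closed subbifunctor: the (ET4)/(ET4${}^{\op}$) bookkeeping is where extension-closedness of $\N$ genuinely does work and must be organized carefully. Second, the cohomologicality of $Q$ on the original triangulated structure in (3): arbitrary triangles are not $\mathfrak{s}_\N$-conflations, so one must break them apart through $\N$ and then splice, and arranging the realizations and connecting maps so that the pieces fit together is the delicate point. The implication ``$\N$ Serre $\Rightarrow\cone(\N,\N)=\C$'' and the abelianness of $\widetilde{\C}_\N$ also require attention, but should follow the pattern already visible in the $t$-structure-heart and cluster-tilting examples.
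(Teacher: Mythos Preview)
Your definition of the relative structure is not the one the paper uses, and this discrepancy is fatal for the rest of the argument. You set
\[
\mathbb{E}'_\N(C,A)=\{\delta\in\mathbb{E}(C,A)\mid \beta_*\delta=0,\ \gamma^*\delta=0\ \text{for all }\beta\colon A\to N,\ \gamma\colon N\to C,\ N\in\N\},
\]
i.e.\ you ask that $x$ be $\N$-monic and $y$ be $\N$-epic. The paper instead defines $\mathbb{E}_\N=\mathbb{E}^L_\N\cap\mathbb{E}^R_\N$ via the conditions (Lex)/(Rex): for every $\gamma\colon N\to C$ the composite $h\circ\gamma$ need only \emph{factor through an object of} $\N[1]$ (equivalently, vanish in $\overline\C=\C/[\N]$), and dually. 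Your condition demands these composites vanish in $\C$, so $\mathbb{E}'_\N\subsetneq\mathbb{E}_\N$ in general. Incidentally, your $\mathbb{E}'_\N$ is closed for \emph{any} $\N$ by standard relative theory, so your remark that extension-closedness is ``precisely where'' closedness is established is off; in the paper it is the more delicate (Lex)/(Rex) structure whose closedness genuinely requires $\N$ to be extension-closed.

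The downstream effect is that your $\L'$ and $\R'$ collapse: an $\mathfrak{s}'_\N$-inflation $A\to B$ with cone $N\in\N$ forces the deflation $B\to N$ to be $\N$-epic, hence split, so $\L'$ consists only of split monomorphisms $A\hookrightarrow A\oplus N$; dually for $\R'$. Consequently your $\Sn'$ is exactly the class inverted by the ideal quotient, and the Gabriel--Zisman localization you obtain is just $\C/[\N]$. This cannot be the paper's $\widetilde\C_\N$: already for a thick subcategory $\N$ the category $\C/[\N]$ is typically not triangulated and is not the Verdier quotient, and for a $t$-structure with $\N=\add(\U*\V)$ the category $\C/[\N]$ is not the heart. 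The paper's $\Sn$ is strictly larger---it is characterized as the class of $g$ sitting in a triangle $A\xrightarrow{f}B\xrightarrow{g}C\xrightarrow{h}A[1]$ with $\overline f=\overline h=0$ in $\overline\C$, equivalently the morphisms that are both monic and epic in $\overline\C$---and it is this richer class, produced by the (Lex)/(Rex) structure, that yields the correct $\widetilde\C_\N$ and makes (2) and (3) go through. Your plan for (2) and (3) therefore rests on a foundation that does not deliver the right localization; you would need to replace $\mathbb{E}'_\N$ by the paper's $\mathbb{E}_\N$ and redo the verifications (thickness, the description $\Sn=\R_{\mathsf{sp}}\circ\L=\R\circ\L_{\mathsf{sp}}$, and (MR1)--(MR4)) for that structure.
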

\noindent
Our construction simultaneously contains the Verdier quotient and certain types of cohomological functors as mentioned below.

As well as the Verdier/Serre quotient, other localizations with respect to a multiplicative system appear in various contexts.
Rump showed that, as for an integral preabelian category $\C$, the class $\mathscr{S}$ of monic and epic morphisms forms a multiplicative system and the localization $\C[\mathscr{S}^{-1}]$ is abelian \cite{Rum01}.
In view of Rump's localization, Beligiannis and Buan-Marsh investigated the natural cohomological functor $\C(T,-)\colon \C\to \mod\End_\C(T)^{\op}$ for a rigid object $T$ in a triangulated category $\C$.
They proved that the extension-closed subcategory $\N:=\Ker\C(T,-)$ of $\C$ naturally determines a class $\mathscr{S}_\N$ of morphisms and there exists a natural equivalence $\C[\Sn^{-1}]\simeq\mod\End_\C(T)^{\op}$.
They also proved that, although $\mathscr{S}_\N$ is not a multiplicative system in $\C$, but $\overline{\Sn}$ becomes a multiplicative system in $\overline{\C}:=\C/[\N]$ the ideal quotient by the morphisms factoring through objects in $\N$ \cite{Bel13, BM13a, BM13b}.
Many authors push this perspective further in various directions by using Abe-Nakaoka's heart of (twin) cotorsion pairs \cite{Nak13, LN19, Liu17, HS20, Oga22}.
Whereas J{\o}rgensen, Palu and Shah investigated relations between a relative extriangulated structure on $\C$ determined by $\N$ and an abelian exact structure on $\mod\End_\C(T)^{\op}$ in terms of the Grothendieck group \cite{JP22, JS22} (see also \cite{Pal08, PPPP19}).
Our localization recovers and strengthens some relevant results in the sense that
it explains how the abelian exact structure on $\mod\End_\C(T)^{\op}$ inherits from the triangulated category $\C$.
In fact, it also streamlines the constructions of a wider class of cohomological functors containing the above mentioned ones.

This article is organized as follows.
The first section is devoted to recall needed foundations of the localization theory of extriangulated categories.
In Section 2, we formulate our extriangulated localization $\widetilde{\C}_\N$ of a triangulated category $\C$ with respect to an extension-closed subcategory $\N\subseteq \C$.
In Section 3 and 4, we reveal necessary and sufficient conditions for the resulting category $\widetilde{\C}_\N$ to be triangulated and exact.
In Section 5, we demonstrate relevance of our localization for 
some cohomological functors.

\subsection*{Notation and convention}
All categories and functors in this article are always assumed to be additive (except for Subsection 1.1).
For an additive category $\C$, the set of morphisms $A\rightarrow B$ in $\C$ is denoted by $\C(A,B)$ or more simply denoted by $(A,B)$ if there is no confusion.
The classes of objects and morphisms in $\C$ are denoted by $\Ob\C$ and $\Mor\C$, respectively.
The symbol $\C^{\op}$ stands for the opposite category of $\C$.
All subcategory $\U\subseteq \C$ is always assumed to be full, additive and closed under isomorphisms.
For $X\in\C$, if $\C(U,X)=0$ for any $U\in\U$, we write abbreviately $\C(\U,X)=0$.
Similar notations will be used in obvious meanings.

\section{Preliminary}\label{sec_pre}

\subsection{Gabriel-Zisman localization}\label{ssec:Gabriel-Zisman_localization}
Since we are interested in Gabriel-Zisman localizations of extriangulated categories which are still equipped with extriangulated structures, in this subsection, we recall the definition of the Gabriel-Zisman localization, following \cite{Fri08} (see also \cite[Section I.2]{GZ67}).

\begin{definition}
Let $\C$ and $\D$ be categories and $\mathscr{S}$ a class of morphisms in $\C$.
A functor $Q\colon \C\to\D$ is called a \textit{Gabriel-Zisman localization of $\C$ with respect to $\mathscr{S}$} if the following universality holds:
\begin{enumerate}
\item[\textnormal{(1)}] $Q(s)$ is an isomorphism in $\D$ for any $s\in\mathscr{S}$;
\item[\textnormal{(2)}] For any functor $F\colon \C\to\D'$ which sends each morphism in $\mathscr{S}$ to an isomorphism in $\D'$, there uniquely exists a functor $F'\colon \D\to\D'$ such that $F=F'\circ Q$.
\end{enumerate}
In this case, we denote the localization functor by $Q\colon \C\to\D=\C[\mathscr{S}^{-1}]$.
\end{definition}

Since the Gabriel-Zisman localization $\C[\mathscr{S}^{-1}]$ always exists provided there are no set-theoretic obstructions,
whenever we consider the Gabriel-Zisman localization of $\C$, we assume that $\C$ is skeletally small.
Morphisms in the category $\C[\mathscr{S}^{-1}]$ can be regarded as compositions of the original morphisms and the formal inverses, see \cite[Thm. 2.1]{Fri08} for the details.
However such a description is hard to control.
If the class $\mathscr{S}$ satisfies the following conditions and its dual, 
any morphism in $\C[\mathscr{S}^{-1}]$ has a very nice description.

\begin{definition}
Let $\mathscr{S}$ be a class of morphisms in an additive category $\C$.
\begin{enumerate}
\item[(MS0)] The identity morphisms of $\C$ lie in $\mathscr{S}$ and $\mathscr{S}$ is closed under composition.
Also, $\mathscr{S}$ is closed under taking finite direct sums. Namely, if $f_i\in\mathscr{S}(X_i,Y_i)$ for $i=1,2$, then $f_1\oplus f_2\in\mathscr{S}(X_1\oplus X_1,Y_1\oplus Y_2)$.
\item[(MS1)] Any diagram of the form
\[
\xymatrix@R=16pt@C=16pt{
&B\ar[d]^f\\
C\ar[r]^s&D
}
\]
with $s\in\mathscr{S}$ can be completed in a commutative square
\[
\xymatrix@R=16pt@C=16pt{
A\ar[r]^{s'}\ar[d]_{f'}&B\ar[d]^f\\
C\ar[r]^s&D
}
\]
with $s'\in\mathscr{S}$.
\item[(MS2)] If $s\colon Y\to Y'$ in $\mathscr{S}$ and $f,f':X\to Y$ are morphisms such that $sf=sf'$, then there exists $s'\colon X'\to X$ in $\mathscr{S}$ such that $fs'=f's'$.
\end{enumerate}
If the above conditions and the dual are satisfied, $\mathscr{S}$ is called a \emph{multiplicative system} of $\C$.
We also say that the class $\mathscr{S}$ \emph{admits a calculus of left and right fractions}.
In this case, if $\C$ is additive, then so is the resulting category $\C[\mathscr{S}^{-1}]$. Moreover, the quotient functor $Q\colon \C\to \C[\S^{-1}]$ is an additive functor.
\end{definition}

\begin{remark}
The condition {\rm (MS0)} is a bit different from the original one defined in \cite{GZ67}.
However, since all categories considered in this article are additive, the difference does not affect on the formulation of the category $\C[\mathscr{S}^{-1}]$.
\end{remark}

As is well-known, the Verdier localization of a triangulated category $\C$ with respect to its thick subcategory $\N$ is defined to be a Gabriel-Zisman localization which admits a calculus of left and right fractions.
In fact, it is the localization of $\C$ with respect to the multiplicative system $\Sn$ consisting of morphisms whose cones belong to $\N$  (e.g. \cite{Nee01}).
The aim of this article is to expand the Verdier localization to a wider case that a given subcategory $\N$ is not thick but still extension-closed.

\subsection{Extriangulated categories}\label{ssec_ET}
This subsection is devoted to recall the localization theory of extriangulated category with respect to a suitable class of morphisms which was introduced in \cite{NOS21}, in the pursuit of unifying the Verdier and Serre quotients.
First we briefly recall some terminology and basic results on extriangulated categories.
All extriangulated categories we consider in this article arise from a given triangulated category $\C$ as a relative theory \cite{INP19} or a localization \cite{NOS21} of $\C$. So, in fact, we do not use the axioms of extriangulated categories and we do not include all the details (see \cite[Sect. 2]{NP19}).
An \textit{extriangulated category} is defined to be a triple $(\C,\mathbb{E},\mathfrak{s})$ of
\begin{itemize}
\item[-] an additive category $\C$;
\item[-] a biadditive bifunctor $\mathbb{E}\colon \C^{\op}\times\C\to \Ab$, where $\Ab$ is the category of abelian groups;
\item[-] a correspondence $\mathfrak{s}$ which associates each equivalence class of  a sequence  $A\lra B\lra C$ in $\C$ to an element in $\mathbb{E}(C,A)$ for any $C,A\in\C$,
\end{itemize}
which satisfies some `additivity' and `compatibility'.
An equivalence class of $A\lra B\lra C$ is denoted by $[A\lra B\lra C]$.
An extriangulated category $(\C,\mathbb{E},\mathfrak{s})$ is simply denoted by $\C$ if there is no confusion.
We also recall the following basic terminology which will be used in many places.

\begin{definition}
Let $(\C,\mathbb{E},\mathfrak{s})$ be an extriangulated category.
\begin{enumerate}
\item[\textnormal{(1)}] An element $\delta\in\mathbb{E}(C, A)$ is called an \emph{$\mathbb{E}$-extension} for any $A,C\in\C$.
For $a\in\C(A,A')$ and $c\in\C(C',C)$, we write as $a_*\delta =\mathbb{E}(C,a)(\delta)$ and $c^*\delta =\mathbb{E}(c,A)(\delta)$.
\item[\textnormal{(2)}] A sequence $A\overset{f}{\lra}B\overset{g}{\lra}C$ corresponding to an $\mathbb{E}$-extension $\delta\in\mathbb{E}(C,A)$ is called an $\mathfrak{s}$-\textit{conflation}.
 In addition, $f$ and $g$ are called an \emph{$\mathfrak{s}$-inflation} and an \emph{$\mathfrak{s}$-deflation}, respectively.
A pair $\langle A\overset{f}{\lra}B\overset{g}{\lra}C, \delta\rangle$ of $\mathbb{E}$-extension $\delta$ and a corresponding $\mathfrak{s}$-conflation is also denoted by $A\overset{f}{\lra}B\overset{g}{\lra}C\overset{\delta}{\dashrightarrow}$ and we call it an \emph{$\mathfrak{s}$-triangle}.
\item[\textnormal{(3)}]
A \emph{morphism} of $\mathfrak{s}$-triangles from $\langle A\overset{f}{\lra}B\overset{g}{\lra}C, \delta\rangle$ to $\langle A'\overset{f'}{\lra}B'\overset{g'}{\lra}C', \delta' \rangle$ is a triplet $(a,b,c)$ of morphisms which makes the following diagram commutative and satisfies $a_*\delta=c^*\delta'$.
\[
 \xymatrix{
 A\ar[r]^{f}\ar[d]_a&B\ar[r]^{g}\ar[d]_b&C\ar[d]^c\ar@{-->}[r]^{\delta}&\\
 A'\ar[r]^{f'}&B'\ar[r]^{g'}&C'\ar@{-->}[r]^{\delta'}&
 }
\]
\end{enumerate}
\end{definition}

Keeping in mind the triangulated case, we introduce the following notions.

\begin{proposition}
Let $\C$ be an extriangulated category.
For an $\mathfrak{s}$-inflation $f\in\C(A,B)$, there exists an $\mathfrak{s}$-triangle $A\overset{f}{\lra} B\overset{g}{\lra} C\overset{\delta}{\dra}$.
Then we call $C$ a \emph{cone} of $f$ and put $\cone(f):=C$.
Similarly, we denote the object $A$ by $\cocone(g)$ and call it a \emph{cocone} of $g$.
For a given $\mathfrak{s}$-inflation $f$, $\cone(f)$ is uniquely determined up to isomorphism.
A dual statement holds for $\cocone(g)$.
\end{proposition}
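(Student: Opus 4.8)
The plan is to treat the two assertions separately, since the existence of the $\mathfrak{s}$-conflation is purely definitional and only the uniqueness of the cone (resp.\ cocone) carries content. That a given $\mathfrak{s}$-inflation $f\in\C(A,B)$ sits in some $\mathfrak{s}$-conflation $A\xrightarrow{f}B\xrightarrow{g}C$ realizing an $\mathbb{E}$-extension $\delta\in\mathbb{E}(C,A)$ is exactly what it means for $f$ to be an $\mathfrak{s}$-inflation, so here there is nothing to prove and we simply set $\cone(f):=C$.

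For the uniqueness, I would start from two $\mathfrak{s}$-triangles $A\xrightarrow{f}B\xrightarrow{g}C\overset{\delta}{\dashrightarrow}$ and $A\xrightarrow{f}B\xrightarrow{g'}C'\overset{\delta'}{\dashrightarrow}$ sharing the same $\mathfrak{s}$-inflation $f$, and first produce a comparison morphism: applying the axiom (ET3) of \cite{NP19} to the (trivially commuting) square formed by $\id_A$ and $\id_B$ yields a morphism $c\colon C\to C'$ such that $(\id_A,\id_B,c)$ is a morphism of $\mathfrak{s}$-triangles, so in particular $g'=c g$ and $c^{\ast}\delta'=\delta$. It then remains to see that any such $c$ is automatically an isomorphism — note that, as in the triangulated case, $c$ itself need not be unique, only its invertibility matters.

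That last step is the extriangulated ``five lemma'', which I would deduce from the long exact sequences attached to an $\mathfrak{s}$-triangle (\cite{NP19}): applying $\C(W,-)$ to $(\id_A,\id_B,c)$ gives a commutative ladder between the six-term exact sequences
\[
\C(W,A)\to\C(W,B)\to\C(W,C)\to\mathbb{E}(W,A)\to\mathbb{E}(W,B)\to\mathbb{E}(W,C)
\]
of $\delta$ and of $\delta'$, in which every vertical map except $\C(W,c)$ and $\mathbb{E}(W,c)$ is an identity; the ordinary five lemma in $\Ab$ then forces $\C(W,c)\colon\C(W,C)\to\C(W,C')$ to be bijective for every $W\in\C$, whence $c$ is an isomorphism by the Yoneda lemma. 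The assertion about $\cocone(g)$ is the $\C^{\op}$-version of this argument: apply (ET3)$^{\op}$ and the $\C(-,W)$-form of the six-term exact sequence.

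The only mildly delicate point is the opening move — confirming that (ET3) really does apply to a square built from identity morphisms and that the comparison map it returns is compatible with the chosen realizations $\delta,\delta'$ — but this is immediate once the axioms are unwound, and everything afterwards is a formal diagram chase. Since the paper deliberately refrains from restating the extriangulated axioms, in the write-up I would present this proof as a short appeal to the corresponding statements of \cite{NP19} rather than reprove them.
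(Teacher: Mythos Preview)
Your argument is correct and is exactly the standard route: produce the comparison morphism via (ET3) and then invoke the long exact $\Hom/\mathbb{E}$-sequences together with the five lemma to conclude it is an isomorphism. The paper itself gives no proof of this proposition at all; it is stated as a basic fact, consistent with the earlier disclaimer that the foundational properties of extriangulated categories are taken from \cite{NP19} without repetition. Your closing remark---that in the write-up one would simply appeal to \cite{NP19}---therefore matches the paper's treatment precisely.
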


For any subcategories $\U$ and $\V$ in $\C$, we denote by $\cone(\V,\U)$ the subcategory consisting of objects $X$ appearing in an $\mathfrak{s}$-triangle $V\lra U\lra X\dra $ with $U\in\U$ and $V\in\V$.
A subcategory $\cocone(\V,\U)$ is defined similarly.

The next lemma directly follows from \cite[Prop.~1.20]{LN19} (see also \cite[Cor.~3.16]{NP19}, \cite[Prop.~2.22]{HS20}) which shows that, like triangulated categories, extriangulated categories admit weak pushouts and weak pullbacks.
It will be used in many places.

\begin{lemma}
\label{lem:wPO_wPB}
The following properties hold.
\begin{enumerate}
\item[\textnormal{(1)}]
For any $\mathfrak{s}$-triangle $A\overset{f}\lra B\overset{g}{\lra} C\overset{\delta}{\dra} $ together with a morphism $a\in\C(A, A')$ and the associated $\mathfrak{s}$-triangle $A'\overset{f'}\lra B'\overset{g'}{\lra} C\overset{a_*\delta}{\dra} $,
there exists $b\in\C(B,B')$ which gives a morphism of $\mathfrak{s}$-triangles 
\[
 \xymatrix{
 A\ar@{}[rd]|{\textnormal{(wPO)}}\ar[r]^{f}\ar[d]_a&B\ar[r]^{g}\ar[d]^b&C\ar@{=}[d]\ar@{-->}[r]^{\delta}&\\
 A'\ar[r]^{f'}&B'\ar[r]^{g'}&C\ar@{-->}[r]^{a_*\delta}&
 }
\]
and makes $A\overset{\binom{f}{a}}{\lra} B\oplus A'\overset{(b\ -f')}{\lra} B'\overset{g'^*\delta}{\dra}$ an $\mathfrak{s}$-triangle.
Furthermore, the commutative square ${\rm (wPO)}$ is a \emph{weak pushout of $f$ along $a$} in the following sense. 
Given morphisms $d\in\C(A',D)$ and $e\in\C(B,D)$ with $da=ef$, there exists $h\in\C(B',D)$ such that $hb=d$ and $hf'=e$. 
\item[\textnormal{(2)}]
Dually, for any $\mathfrak{s}$-triangle $A\overset{f}\lra B\overset{g}{\lra} C\overset{\delta}{\dra} $ together with a morphism $c\in\C(C',C)$ and the associated $\mathfrak{s}$-triangle $A\overset{f'}\lra B'\overset{g'}{\lra} C'\overset{c^*\delta}{\dra} $,
there exists $b\in\C(B',B)$ which gives a morphism of $\mathfrak{s}$-triangles
\[
 \xymatrix{
 A\ar[r]^{f'}\ar@{=}[d]&B'\ar@{}[rd]|{\textnormal{(wPB)}}\ar[r]^{g'}\ar[d]_b&C'\ar[d]^c\ar@{-->}[r]^{c^*\delta}&\\
 A\ar[r]^{f}&B\ar[r]^{g}&C\ar@{-->}[r]^{\delta}&
 }
\]
and makes $B'\overset{\binom{-g'}{b}}{\lra} C'\oplus B\overset{(c\ g)}{\lra} C\overset{f'_*\delta}{\dra}$ an $\mathfrak{s}$-triangle.
The commutative square ${\rm (wPB)}$ is a \emph{weak pullback of $g$ along $c$}, which is defined dually to a weak pushout.
\end{enumerate}
\end{lemma}


Triangulated/exact structures on a additive category $\C$ naturally give rise to extriangulated structures.
In this case, we say that the extriangulated category \emph{corresponds to a triangulated/exact category} (see \cite[Prop.~3.22, Exam.~2.13]{NP19} for details).
Moreover, there exist useful criteria for an extriangulated category to correspond to a triangulated/exact category.

\begin{proposition}\label{prop_extri_to_tri}
Let $(\C, \mathbb{E}, \mathfrak{s})$ be an extriangulated category, in which any morphisms are $\mathfrak{s}$-inflations and $\mathfrak{s}$-deflations.
Then, it corresponds to a triangulated category.
In this case, for any triangle $A\overset{f}{\lra}B\overset{g}{\lra}C\overset{h}{\lra}A[1]$,
we denote by $A\overset{f}{\lra}B\overset{g}{\lra}C\overset{h}{\dashrightarrow}$ the corresponding $\mathfrak{s}$-triangle.
\end{proposition}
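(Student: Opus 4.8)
The plan is to extract a shift autoequivalence of $\C$ from the hypothesis and then to take the distinguished triangles to be the $\mathfrak{s}$-triangles, read off through the resulting natural isomorphism $\mathbb{E}(C,A)\cong\C(C,A[1])$. First I would build the shift. For every $X\in\C$ the morphism $X\to 0$ is, by assumption, an $\mathfrak{s}$-inflation, so the preceding proposition provides an $\mathfrak{s}$-conflation $X\to 0\to X[1]$ with a realizing $\mathbb{E}$-extension $\delta_X\in\mathbb{E}(X[1],X)$, and $X[1]=\cone(X\to 0)$ is determined up to isomorphism. Applying $\C(W,-)$ and using the six-term exact sequence $\C(W,0)\to\C(W,X[1])\to\mathbb{E}(W,X)\to\mathbb{E}(W,0)$ available in any extriangulated category, together with $\C(W,0)=0=\mathbb{E}(W,0)$, yields an isomorphism $\C(W,X[1])\xrightarrow{\ \sim\ }\mathbb{E}(W,X)$ natural in $W$; since $\mathbb{E}(-,X)$ is functorial in $X$ as well, Yoneda's lemma promotes $X\mapsto X[1]$ to an additive endofunctor $[1]$ of $\C$ with $\C(-,-[1])\cong\mathbb{E}(-,-)$. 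Dually, every $0\to X$ is an $\mathfrak{s}$-deflation, so one forms $X[-1]:=\cocone(0\to X)$ and obtains an endofunctor $[-1]$ with $\C((-)[-1],-)\cong\mathbb{E}(-,-)$. That $[1]$ and $[-1]$ are mutually quasi-inverse is then immediate: both $X\to 0\to X[1]$ and $(X[1])[-1]\to 0\to X[1]$ are $\mathfrak{s}$-conflations realizing the same $\mathfrak{s}$-deflation $0\to X[1]$, so the uniqueness of cocones in the preceding proposition gives $X\cong(X[1])[-1]$, and symmetrically $X\cong(X[-1])[1]$ (one checks these are natural, or deduces it from the adjunction $[-1]\dashv[1]$ encoded by the two displayed natural isomorphisms).

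With the shift in hand, I would declare $A\xrightarrow{f}B\xrightarrow{g}C\xrightarrow{h}A[1]$ to be distinguished exactly when it is isomorphic, as a sequence, to one arising from an $\mathfrak{s}$-conflation $A\xrightarrow{f}B\xrightarrow{g}C\overset{\delta}{\dashrightarrow}$ with $h$ the image of $\delta$ under $\C(C,A[1])\cong\mathbb{E}(C,A)$. With this choice the triangulated axioms unwind from the extriangulated ones. The $\mathfrak{s}$-conflation $X\xrightarrow{\id}X\to 0$, closure of $\mathfrak{s}$-conflations under isomorphism, and the fact that every morphism is an $\mathfrak{s}$-inflation (hence sits in an $\mathfrak{s}$-conflation) give (TR1); completion of a commutative square to a morphism of $\mathfrak{s}$-triangles gives (TR3); and ET4 together with its dual --- applicable to any composable pair of morphisms precisely because all morphisms are both $\mathfrak{s}$-inflations and $\mathfrak{s}$-deflations --- yields the octahedron (TR4), once connecting morphisms are consistently matched with connecting $\mathbb{E}$-extensions via the isomorphism above.

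The remaining axiom (TR2), rotation, is where the hypothesis is genuinely used a second time. To see that $B\xrightarrow{g}C\xrightarrow{h}A[1]\xrightarrow{-f[1]}B[1]$ is again distinguished, I would form the weak pushout of $f\colon A\to B$ along $A\to 0$ (equivalently, apply ET4 to the factorization $A\xrightarrow{f}B\to 0$): since $\delta$ maps into $\mathbb{E}(C,0)=0$, this produces an $\mathfrak{s}$-conflation whose outer terms are $C$ and $A[1]=\cone(A\to 0)$, i.e. an $\mathfrak{s}$-triangle $B\to C\to A[1]\dashrightarrow$; a diagram chase through the weak pushout square identifies its middle morphism with $g$ and its connecting $\mathbb{E}$-extension with $\pm f[1]$, so the rotate is distinguished, and the inverse rotation is obtained dually via a weak pullback along $0\to B[1]$, using that $g$ is an $\mathfrak{s}$-deflation. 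The main obstacle of the whole argument is exactly this step: in a general extriangulated category $g$ need not be an $\mathfrak{s}$-inflation nor $f$ an $\mathfrak{s}$-deflation, so the rotated sequences carry no a priori extriangulated content, and it is precisely the standing hypothesis that supplies the missing half of the data; the only real labor is then the sign- and connecting-map bookkeeping through the weak pushout and pullback squares recalled in this section. (This is, in substance, \cite[Prop.~3.22]{NP19}.)
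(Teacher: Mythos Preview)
Your argument is essentially correct and self-contained: you build the suspension from the conflation $X\to 0\to X[1]$, identify $\mathbb{E}(-,-)\cong\C(-,-[1])$ via the long exact sequence, and then read off the triangulated axioms from the extriangulated ones, with (TR2) obtained by applying (ET4) to the composite $A\xrightarrow{f}B\to 0$. The bookkeeping you flag (signs, naturality of the quasi-inverse) is routine.

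By contrast, the paper does not argue at all: its entire proof is the sentence ``It directly follows from \cite[Thm.~6.20]{NP19} and \cite[Cor.~7.6]{NP19}.'' Those references pass through the Frobenius/model-structure machinery of \cite{NP19}: one observes that the hypothesis forces the projective--injectives to be zero (since $0\to X$ is a deflation and $X\to 0$ an inflation for every $X$), so $(\C,\mathbb{E},\mathfrak{s})$ is Frobenius with trivial stable quotient, and the cited results then identify $\C$ itself with a triangulated category. Your approach is more elementary and more transparent for this particular statement, since it avoids the detour through Hovey twin cotorsion pairs; the paper's route, on the other hand, gets the result for free once that machinery is in place. One small correction: your closing parenthetical points to \cite[Prop.~3.22]{NP19}, but that is the converse (triangulated $\Rightarrow$ extriangulated); the relevant direction is handled in \S\S6--7 of \cite{NP19}, precisely the results the paper invokes.
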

\begin{proof}
It directly follows from \cite[Thm. 6.20]{NP19} and \cite[Cor. 7.6]{NP19}.
\end{proof}

\begin{proposition}\cite[Cor. 3.18]{NP19}\label{prop_extri_to_exact}
Let $(\C, \mathbb{E}, \mathfrak{s})$ be an extriangulated category, in which any $\mathfrak{s}$-inflation is monic and any $\mathfrak{s}$-deflation is epic.
Then, it corresponds to an exact category.
\end{proposition}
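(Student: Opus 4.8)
We must exhibit a Quillen exact structure on $\C$ whose associated extriangulated structure is $(\C,\mathbb{E},\mathfrak{s})$. The plan is to take as conflations exactly the $\mathfrak{s}$-conflations $A\overset{f}{\lra}B\overset{g}{\lra}C$, to verify the exact-category axioms for this class, and then to check that $\mathbb{E}$ is recovered as $\Ext^1$ of the resulting exact category, compatibly with the realizations $\mathfrak{s}$. I would organise the argument in three steps: (i) each $\mathfrak{s}$-conflation is a kernel--cokernel pair; (ii) the axioms of an exact category hold for this class; (iii) the canonical comparison $\mathbb{E}\to\Ext^1$ is a natural isomorphism respecting $\mathfrak{s}$.

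For (i): for any $\mathfrak{s}$-triangle $A\overset{f}{\lra}B\overset{g}{\lra}C\overset{\delta}{\dashrightarrow}$ and any $X\in\C$, the extriangulated axioms give exact sequences $\C(X,A)\to\C(X,B)\to\C(X,C)$ and $\C(C,X)\to\C(B,X)\to\C(A,X)$ (see \cite{NP19}). Hence $gf=0$; any $v\colon B\to X$ with $vf=0$ factors through $g$, uniquely because $g$ is epic by hypothesis, so $g=\Cok f$; dually, using that $f$ is monic, $f=\Ker g$. Thus every $\mathfrak{s}$-conflation is a kernel--cokernel pair.

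For (ii): split sequences $A\lra A\oplus C\lra C$ are $\mathfrak{s}$-conflations (realizing the split extension $0\in\mathbb{E}(C,A)$), the class is closed under isomorphism and finite direct sums by the additivity of $\mathbb{E}$ and $\mathfrak{s}$, and closure of $\mathfrak{s}$-inflations (resp. $\mathfrak{s}$-deflations) under composition follows from axiom (ET4) (resp. its dual). For pushouts, let $f\in\C(A,B)$ be an $\mathfrak{s}$-inflation sitting inside $A\overset{f}{\lra}B\overset{g}{\lra}C\overset{\delta}{\dashrightarrow}$ and let $a\in\C(A,A')$ be arbitrary; the weak pushout (Po) along $a$ produces an $\mathfrak{s}$-conflation $A\overset{\binom{-f}{a}}{\lra}B\oplus A'\overset{(b\ f')}{\lra}B'$ with $bf=f'a$ and $g'b=g$, where $A'\overset{f'}{\lra}B'\overset{g'}{\lra}C\overset{a_*\delta}{\dashrightarrow}$. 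By step (i) the deflation $(b\ f')$ is a cokernel of $\binom{-f}{a}$, so $(B',b,f')$ is the pushout of $B\overset{f}{\longleftarrow}A\overset{a}{\lra}A'$, and its new leg $f'$ is an $\mathfrak{s}$-inflation. Hence pushouts of inflations along arbitrary morphisms exist and stay inflations. Dually, the weak pullback (Pb) of a deflation $g$ along any $c\colon C'\to C$, together with the $\mathfrak{s}$-conflation $E\overset{\binom{-b}{g'}}{\lra}B\oplus C'\overset{(g\ c)}{\lra}C$, shows pullbacks of deflations exist and stay deflations. This yields an exact structure $\E$ on $\C$.

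For (iii): I would study $\theta_{C,A}\colon\mathbb{E}(C,A)\to\Ext^1_{\E}(C,A)$, $\delta\mapsto[\mathfrak{s}(\delta)]$. Surjectivity is immediate since by construction every conflation of $\E$ has the form $\mathfrak{s}(\delta)$; additivity holds because the sum of $\mathbb{E}$-extensions is realized by the Baer sum of conflations (an instance of the additivity and compatibility of $\mathfrak{s}$); and injectivity follows from the realization axioms: if $\mathfrak{s}(\delta)$ and $\mathfrak{s}(\delta')$ are equivalent via $h\colon B\overset{\sim}{\lra}B'$ with $hf=f'$ and $g'h=g$, then (ET3) lifts this to a morphism $(1_A,h,c)$ of $\mathfrak{s}$-triangles $\delta\to\delta'$; since $cg=g'h=g$ and $g$ is epic, $c=1_C$, whence $\delta=(1_A)_*\delta=(1_C)^*\delta'=\delta'$. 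Thus $\theta$ is a natural isomorphism, and $(\C,\mathbb{E},\mathfrak{s})$ corresponds to the exact category $(\C,\E)$. The hypothesis is used essentially in (i) and in upgrading the weak pushout/pullback squares of (ii) to honest (co)cartesian squares; the main obstacle I would anticipate is step (iii), i.e.\ verifying that the abstract $\Ext^1$ of the exact category just built is canonically $\mathbb{E}$ --- this forces one to invoke the realization axiom (ET3) and the additivity/compatibility of $\mathfrak{s}$, rather than only formal manipulations with short exact sequences. (This is precisely \cite[Cor.~3.18]{NP19}.)
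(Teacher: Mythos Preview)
The paper does not give its own proof of this proposition: it simply records the statement and cites \cite[Cor.~3.18]{NP19} as its source, with no accompanying proof environment. So there is nothing in the paper to compare against directly.

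That said, your sketch is correct and is essentially the argument one finds in \cite{NP19}. The three-step plan is the natural one: (i) the long exact $\Hom$-sequences together with the monic/epic hypothesis force each $\mathfrak{s}$-conflation to be a kernel--cokernel pair; (ii) the extriangulated axioms supply exactly the closure properties needed for a Quillen exact structure, with the monic/epic hypothesis upgrading weak (co)cartesian squares to genuine ones; (iii) the comparison $\mathbb{E}\to\Ext^1$ is bijective, injectivity coming from (ET3) plus epicness of deflations as you describe. Your identification of step (iii) as the place where the realization axioms are genuinely invoked, rather than only formal exact-category manipulations, is accurate.
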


One of the advantages of revealing an extriangulated structure lies in the fact that it is closed under certain basic operations: extension-closed subcategories; relative theory; localization.

\subsubsection{Extension-closed subcategories}
The first one says that an extension-closed subcategory of an extriangulated category is still equipped with an extriangulated structure.

\begin{proposition}\cite[Rem. 2.18]{NP19}\label{prop:closed_under_operation}
Let $(\C,\mathbb{E},\mathfrak{s})$ be an extriangulated category and $\N$ an extension-closed subcategory of $\C$.
If we define $\mathbb{E}|_\N$ to be the restriction of $\mathbb{E}$ to $\N^{\op}\times \N$ and $\mathfrak{s}|_\N:=\mathfrak{s}|_{\mathbb{E}|_\N}$, then $(\N,\mathbb{E}|_\N,\mathfrak{s}|_\N)$ becomes an extriangulated category.
\end{proposition}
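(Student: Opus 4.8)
The plan is to verify that the restricted triple $(\N,\mathbb{E}|_\N,\mathfrak{s}|_\N)$ satisfies the defining axioms \textnormal{(ET1)}--\textnormal{(ET4)} and their duals of an extriangulated category from \cite{NP19}, reducing each one to the corresponding property -- already available -- of $(\C,\mathbb{E},\mathfrak{s})$. The single piece of genuine content is a well-definedness observation: if $\delta\in\mathbb{E}|_\N(C,A)=\mathbb{E}(C,A)$ with $A,C\in\N$ and $\mathfrak{s}(\delta)=[A\lra B\lra C]$, then $B\in\N$, precisely because $\N$ is extension-closed. Hence $\mathfrak{s}|_\N$ assigns to each $\mathbb{E}|_\N$-extension an equivalence class of sequences whose three terms all lie in $\N$; and since $\N$ is a full subcategory closed under isomorphisms, two such sequences are equivalent in $\N$ exactly when they are equivalent in $\C$, so $\mathfrak{s}|_\N$ is a well-defined correspondence on $\mathbb{E}|_\N$. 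Likewise, the split extension $0\in\mathbb{E}|_\N(C,A)$ is realized by the split conflation $A\lra A\oplus C\lra C$, whose middle term lies in $\N$ since $\N$ is additive.

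Granting this, \textnormal{(ET1)} for $\mathbb{E}|_\N$ is immediate, being the restriction of the biadditive bifunctor $\mathbb{E}$ to $\N^{\op}\times\N$. For \textnormal{(ET2)} (that $\mathfrak{s}|_\N$ is an additive realization), one copies the data supplied by $\mathfrak{s}$ in $\C$: given $a\colon A\to A'$ and $c\colon C'\to C$ in $\N$, the extensions $a_*\delta$ and $c^*\delta$ again belong to $\mathbb{E}|_\N$, their $\C$-realizations have middle terms in $\N$ by the observation above, and the compatibility squares produced in $\C$ consist of objects and morphisms of $\N$ (using fullness), hence are the required squares in $\N$. For \textnormal{(ET3)} and \textnormal{(ET3)$^{\op}$}, the morphism of $\mathfrak{s}$-triangles furnished in $\C$ involves only objects that are already vertices of $\mathfrak{s}|_\N$-triangles and only morphisms between such objects, so by fullness it is already a morphism of $\mathfrak{s}|_\N$-triangles; no new objects appear, so nothing more is needed.

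The only axioms where a new object is created are \textnormal{(ET4)} and its dual, and this is the step to watch. Given $\mathfrak{s}|_\N$-triangles $A\lra B\lra D\dashrightarrow$ and $B\lra C\lra F\dashrightarrow$ -- so that $A,B,C,D,F\in\N$ by the very definition of $\mathfrak{s}|_\N$ -- axiom \textnormal{(ET4)} in $\C$ produces an object $E$ together with $\mathfrak{s}$-triangles $A\lra C\lra E\dashrightarrow$ and $D\lra E\lra F\dashrightarrow$ and the usual compatibility data. Applying extension-closedness to the conflation $D\lra E\lra F$, whose outer terms $D$ and $F$ lie in $\N$, yields $E\in\N$; all morphisms in the resulting octahedral diagram then lie in $\N$ by fullness, and the relations among the $\mathbb{E}$-extensions and the commutativities transfer verbatim from $\C$, so the diagram is an \textnormal{(ET4)}-diagram in $\N$. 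The dual \textnormal{(ET4)$^{\op}$} follows by the same argument in $\C^{\op}$, using that $\N^{\op}$ is extension-closed in $\C^{\op}$. The ``hard part'' is therefore purely a matter of bookkeeping: at each invocation of an axiom of $\C$, one must check that every newly appearing object is the middle term of a conflation whose two outer terms are already known to lie in $\N$, so that extension-closedness applies; no deeper property of $\C$ is ever required.
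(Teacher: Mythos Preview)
The paper does not give its own proof of this proposition; it merely cites \cite[Rem.~2.18]{NP19} and states the result without argument. Your proposal supplies exactly the standard axiom-by-axiom verification one would expect: the key observation that extension-closedness forces the middle term of every realized conflation to lie in $\N$, followed by checking that each axiom \textnormal{(ET1)}--\textnormal{(ET4)} and its dual inherits from $(\C,\mathbb{E},\mathfrak{s})$ because any new object created is a middle term of a conflation with outer terms already in $\N$. This is correct and is precisely the reasoning behind the cited remark in \cite{NP19}, so there is nothing to compare against in the present paper and your argument stands on its own.
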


\subsubsection{Relative theory}
The second one is a relative structure of an extriangulated category.
A ``weaker'' structure with respect to a given triangulated/exact category is considered in various contexts, e.g., \cite{Bel00, Kra00} for triangulated cases and \cite{AS93, DRSSK99} for exact cases.
These concepts are generalized in terms of extriangulated categories as below.

\begin{proposition}\label{prop_relative}\cite[Prop. 3.16]{HLN21}
Let us consider an extriangulated category $(\C,\mathbb{E},\mathfrak{s})$.
The following conditions are equivalent for a biadditive subfunctor $\mathbb{F}\subseteq \mathbb{E}$.
\begin{enumerate}
\item[{\rm (1)}]
$(\C,\mathbb{F},\mathfrak{s}|_\mathbb{F})$ forms an extriangulated category, where $\mathfrak{s}|_\mathbb{F}$ is the restriction of $\mathfrak{s}$ to $\mathbb{F}$.
\item[{\rm (2)}]
$\mathfrak{s}|_\mathbb{F}$-inflations are closed under compositions.
\item[{\rm (3)}]
$\mathfrak{s}|_\mathbb{F}$-deflations are closed under compositions.
\end{enumerate}
If $\mathbb{F}$ satisfies the above equivalent conditions, it is called a \emph{closed} subfunctor of $\mathbb{E}$.
Furthermore, we say that the extriangulated structure $(\C,\mathbb{F},\mathfrak{s}|_\mathbb{F})$ is \emph{relative} to $(\C,\mathbb{E},\mathfrak{s})$ or a \emph{relative theory} of $(\C,\mathbb{E},\mathfrak{s})$.
\end{proposition}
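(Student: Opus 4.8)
The plan is to establish $(1)\Rightarrow(2)$, $(1)\Rightarrow(3)$ and the converses; since $(\C,\mathbb{F},\mathfrak{s}|_\mathbb{F})$ and its opposite extriangulated category play symmetric roles, it suffices to treat $(1)\Rightarrow(2)$ and $(2)\Rightarrow(1)$. The first is immediate: in any extriangulated category the $\mathfrak{s}$-inflations are closed under composition (apply (ET4) to $A\xrightarrow{f}B\to\cone(f)\dashrightarrow$ and $B\xrightarrow{g}C\to\cone(g)\dashrightarrow$ to see that $gf$ is an $\mathfrak{s}$-inflation), so the same holds in $(\C,\mathbb{F},\mathfrak{s}|_\mathbb{F})$ as soon as it is known to be extriangulated. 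For $(2)\Rightarrow(1)$ I would first isolate the genuinely new input by proving the ``bridge'' $(2)\Rightarrow(3)$, and only then verify the axioms, since the verification becomes routine once both (2) and (3) are in hand.

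Granting (2) and (3), the axiom check runs as follows. (ET1) holds because $\mathbb{F}$ is assumed to be an additive bifunctor, and (ET2) because $\mathfrak{s}|_\mathbb{F}$ is literally the restriction of the realization $\mathfrak{s}$; (ET3) and its dual are inherited, since a morphism of $\mathbb{F}$-extensions is a morphism of $\mathbb{E}$-extensions with the same realizing conflations, so the morphism of conflations supplied by (ET3) for $(\C,\mathbb{E},\mathfrak{s})$ serves. For (ET4), I would feed two $\mathfrak{s}|_\mathbb{F}$-triangles $A\xrightarrow{f}B\xrightarrow{f'}D\overset{\delta}{\dashrightarrow}$ and $B\xrightarrow{g}C\xrightarrow{g'}F\overset{\delta'}{\dashrightarrow}$ into (ET4) for $(\C,\mathbb{E},\mathfrak{s})$, obtaining an octahedron with a connecting $\mathfrak{s}$-triangle $D\to E\to F\overset{f'_*\delta'}{\dashrightarrow}$ and a ``diagonal'' $\mathfrak{s}$-triangle $A\xrightarrow{gf}C\to E\overset{\delta''}{\dashrightarrow}$. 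Now $f'_*\delta'=\mathbb{E}(F,f')(\delta')\in\mathbb{F}$ because $\mathbb{F}$ is a subfunctor, while $\delta''\in\mathbb{F}$ because $gf$ is an $\mathfrak{s}|_\mathbb{F}$-inflation by (2), by way of the \emph{detection principle}: any $\mathbb{E}$-extension realized by a conflation whose inflation is an $\mathfrak{s}|_\mathbb{F}$-inflation automatically lies in $\mathbb{F}$. This principle is the one delicate preliminary; it follows from the uniqueness of cones of $\mathfrak{s}$-inflations recorded above, together with the standard compatibility of $\mathfrak{s}$ with isomorphisms of conflations, which identifies the extension realized by $A\xrightarrow{x}B\xrightarrow{y}C$ with the $\mathbb{E}(c,A)$-image of the one realized by $A\xrightarrow{x}B\xrightarrow{y'}C'$ along the canonical isomorphism $c\colon C'\to C$. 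Hence the whole octahedron lives in $\mathfrak{s}|_\mathbb{F}$, so (ET4) holds, and the dual of (ET4) holds in the same way using (3); thus $(\C,\mathbb{F},\mathfrak{s}|_\mathbb{F})$ is extriangulated.

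The remaining, and principal, task is the bridge $(2)\Rightarrow(3)$, which I expect to be the main obstacle. Given composable $\mathfrak{s}|_\mathbb{F}$-deflations $p\colon B\to C$ and $q\colon C\to D$, realized by $\mathbb{F}$-triangles $K\xrightarrow{i}B\xrightarrow{p}C\overset{\epsilon}{\dashrightarrow}$ and $L\xrightarrow{j}C\xrightarrow{q}D\overset{\epsilon'}{\dashrightarrow}$, I would apply the dual of (ET4) in $(\C,\mathbb{E},\mathfrak{s})$ to produce $G=\cocone(qp)$, an $\mathfrak{s}$-triangle $G\xrightarrow{m}B\xrightarrow{qp}D\overset{\eta}{\dashrightarrow}$, an $\mathfrak{s}$-triangle $K\xrightarrow{n}G\xrightarrow{\bar q}L\overset{j^*\epsilon}{\dashrightarrow}$, and a commutative square presenting $G$ as the weak pullback of $p$ along $j$; in particular $i=mn$ and $pm=j\bar q$, while $j^*\epsilon\in\mathbb{F}$ by the subfunctor property, so $K\xrightarrow{n}G\xrightarrow{\bar q}L\overset{j^*\epsilon}{\dashrightarrow}$ is an $\mathfrak{s}|_\mathbb{F}$-triangle, and the weak pullback yields an $\mathfrak{s}|_\mathbb{F}$-triangle $G\xrightarrow{\binom{-m}{\bar q}}B\oplus L\xrightarrow{(p\ j)}C$ (its $\mathbb{E}$-extension being a functorial image of $\epsilon$). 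The goal is $\eta\in\mathbb{F}$. The key move is this: composing the $\mathfrak{s}|_\mathbb{F}$-inflation $\binom{-m}{\bar q}$ with the $\mathfrak{s}|_\mathbb{F}$-inflation $\mathrm{id}_B\oplus j\colon B\oplus L\to B\oplus C$ and invoking (2), the morphism $\binom{-m}{pm}\colon G\to B\oplus C$ --- which equals both $(\mathrm{id}_B\oplus j)\circ\binom{-m}{\bar q}$ (using $pm=j\bar q$) and $\binom{-\mathrm{id}_B}{p}\circ m$ --- is an $\mathfrak{s}|_\mathbb{F}$-inflation. Since $\binom{-\mathrm{id}_B}{p}\colon B\to B\oplus C$ is a split inflation, feeding the $\mathfrak{s}$-triangle $G\xrightarrow{m}B\xrightarrow{qp}D\overset{\eta}{\dashrightarrow}$ and the split $\mathfrak{s}$-triangle $B\xrightarrow{\binom{-\mathrm{id}_B}{p}}B\oplus C\to C$ into (ET4) for $(\C,\mathbb{E},\mathfrak{s})$ produces a diagonal $\mathfrak{s}$-triangle $G\xrightarrow{\binom{-m}{pm}}B\oplus C\to E'\overset{\delta''}{\dashrightarrow}$ together with a split connecting $\mathfrak{s}$-triangle $D\xrightarrow{d}E'\to C$ satisfying $d^*\delta''=\eta$; as $\binom{-m}{pm}$ is an $\mathfrak{s}|_\mathbb{F}$-inflation, the detection principle gives $\delta''\in\mathbb{F}$, and then $\eta=\mathbb{E}(d,G)(\delta'')\in\mathbb{F}$. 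Hence $qp$ is an $\mathfrak{s}|_\mathbb{F}$-deflation. The crux throughout is exactly this bridge: the subfunctor property disposes of every extension in sight that is visibly a pullback or pushout of $\epsilon$ or $\epsilon'$, but capturing the ``diagonal'' extensions $\eta$ and $\delta''$ requires channeling the composition hypothesis through two successive applications of the ambient octahedron and gluing them with the detection principle.
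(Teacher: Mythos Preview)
The paper does not supply a proof of this proposition; it is quoted from \cite[Prop.~3.16]{HLN21} and used as a black box throughout. There is therefore nothing in the present paper to compare your argument against.

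Your proof is correct and the architecture is the right one: reduce by duality to $(1)\Rightarrow(2)$ and $(2)\Rightarrow(1)$, isolate the bridge $(2)\Rightarrow(3)$ as the only substantive step, and then run the axiom check with both (2) and (3) in hand. The bridge argument is clean: the Mayer--Vietoris conflation $G\to B\oplus L\to C$ realizes $n_*\epsilon\in\mathbb{F}$, the direct-sum inflation $\mathrm{id}_B\oplus j$ lies in $\mathbb{F}$ by additivity, so (2) forces $\binom{-m}{pm}$ to be an $\mathfrak{s}|_\mathbb{F}$-inflation, and a second (ET4) against the split triangle on $\binom{-\mathrm{id}_B}{p}$ exhibits $\eta$ as $d^*\delta''$ with $\delta''\in\mathbb{F}$ by detection. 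The one place that deserves a firmer sentence is the \emph{detection principle} itself. Your justification via ``the canonical isomorphism $c$'' is morally right but slightly loose, since two distinct $\mathbb{E}$-extensions may a priori share a realization class. A watertight version uses the long exact sequence of \cite[Cor.~3.12]{NP19}: from the $\mathfrak{s}|_\mathbb{F}$-triangle $A\xrightarrow{x}B\xrightarrow{y'}C'\overset{\delta'}{\dashrightarrow}$ one has exactness of
\[
\C(C,C')\xrightarrow{(\delta')^{\#}}\mathbb{E}(C,A)\xrightarrow{x_*}\mathbb{E}(C,B),
\]
and $x_*\delta=0$ because $x$ is already the inflation of the $\mathfrak{s}$-triangle realizing $\delta$; hence $\delta=c^*\delta'$ for some $c\in\C(C,C')$, and $\delta\in\mathbb{F}$ since $\mathbb{F}$ is a subfunctor. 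With this patch every step, including the axiom verification for (ET4) and its dual, goes through as written.
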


\subsubsection{Localization}
The third operation is a certain localization which is instrumental to formulate our statements.
In advance, we recall from \cite[Def. 2.11]{NOS21} the definition of exact functors between extriangulated categories, see also \cite[Lem. 3.19]{BTHSS23}.

\begin{definition}\label{Def_exact_functor}
Let $(\C,\mathbb{E},\mathfrak{s})$, $(\C',\mathbb{E}',\mathfrak{s}')$ and $(\C'',\mathbb{E}'',\mathfrak{s}'')$ be extriangulated categories.
\begin{enumerate}
\item
{\rm (\cite[Def. 2.23]{B-TS21})}
An \emph{exact functor} $(F,\phi)\colon (\C,\mathbb{E},\mathfrak{s})\to(\C',\mathbb{E}',\mathfrak{s}')$ is a pair of an additive functor $F\colon \C\to\C'$ and a natural transformation $\phi\colon \mathbb{E}\rightarrow\mathbb{E}'\circ(F^{\op}\times F)$ which satisfies
\[
\mathfrak{s}'(\phi_{C,A}(\delta))=[F(A)\overset{F(x)}{\lra}F(B)\overset{F(y)}{\lra}F(C)]
\]
for any $\mathfrak{s}$-triangle $A\overset{x}{\lra}B\overset{y}{\lra}C\overset{\delta}{\dashrightarrow}$ in $\C$.
\item
For exact functors $(F,\phi)\colon (\C,\mathbb{E},\mathfrak{s})\to (\C',\mathbb{E}',\mathfrak{s}')$ and $(F',\phi')\colon (\C',\mathbb{E}',\mathfrak{s}')\to (\C'',\mathbb{E}'',\mathfrak{s}'')$, their composition $(F'',\phi'')=(F'\phi')\circ(F,\phi)$ is defined to be the pair of $F''= F'\circ F$ and $\phi''= (\phi'\cdot(F^{\op}\times F))\circ\phi$, where $\phi'\cdot(F^{\op}\times F)$ denotes the whiskering of $\phi'$ with $F^{\op}\times F$.
\item
Let $(F,\phi),(G,\psi)\colon (\C,\mathbb{E},\mathfrak{s})\to (\C',\mathbb{E}',\mathfrak{s}')$ be exact functors. A \emph{natural transformation} $\eta\colon (F,\phi)\to (G,\psi)$ \emph{of exact functors} is a natural transformation $\eta\colon F\to G$ of additive functors, which satisfies
\begin{equation*}
(\eta_A)_*\phi_{C,A}(\delta)=(\eta_C)^*\psi_{C,A}(\delta)
\end{equation*}
for any $\delta\in\mathbb{E}(C,A)$.
\item
{\rm (\cite[Prop. 2.13]{NOS21})}
An exact functor $(F,\phi)\colon (\C,\mathbb{E},\mathfrak{s})\to(\C',\mathbb{E}',\mathfrak{s}')$ is called an \emph{exact equivalence}, if there exists an exact functor $(G,\psi)\colon (\C',\mathbb{E}',\mathfrak{s}')\to(\C,\mathbb{E},\mathfrak{s})$ and a natural isomorphisms $\eta\colon (G,\psi)\circ (F,\phi)\cong\id_{\C}$ and $\eta'\colon (F,\phi)\circ(G,\psi)\cong\id_{\C'}$ of exact functors.
\end{enumerate}
\end{definition}

\begin{remark}\label{Rem_exact_functor}
\begin{enumerate}
\item
The notion of an exact functor coincides with the usual ones when both of $(\C,\mathbb{E},\mathfrak{s})$ and $(\C',\mathbb{E}',\mathfrak{s}')$ correspond to exact or triangulated categories, see \cite[Thm.~2.33, 2.34]{B-TS21} for the details.
\item
A triangle equivalence of triangulated categories gives rise to an exact equivalence with respect to the corresponding extriangulated structures.
\item
Similarly, an equivalence of abelian categories can be regarded as an exact equivalence in an obvious way.
\end{enumerate}
\end{remark}

Extension-closedness and the relative theory provide typical examples of exact functors, which is mentioned in \cite[Exam. 3.8]{Hau21} from the viewpoint of $n$-extriangulated subcategories.

\begin{example}
\label{ex_exact_functor}
Let $(\C,\mathbb{E},\mathfrak{s})$ be any extriangulated category. 
\begin{enumerate}
\item
We regard  an extension-closed subcategory $\N\subseteq\C$ as a natural extriangulated category $(\N,\mathbb{E}|_\N,\mathfrak{s}|_\N)$.
The canonical inclusion functor $\mathsf{inc}\colon\N\hookrightarrow\C$ induces an exact functor 
$(\mathsf{inc},\iota)\colon (\N,\mathbb{E}|_\N,\mathfrak{s}|_\N)\to(\C,\mathbb{E},\mathfrak{s})$, where $\iota\colon \mathbb{E}|_\N \rightarrow \mathbb{E}$ is the canonical inclusion. 

\item
For a closed subfunctor $\mathbb{F}\subseteq\mathbb{E}$ and the relative extriangulated category $(\C,\mathbb{F},\mathfrak{s}|_\mathbb{F})$, 
the identity $\id_{\C}$ and the inclusion $\iota \colon\mathbb{F}\hookrightarrow\mathbb{E}$ give rise to an exact functor $(\id_{\C},\iota)\colon (\C,\mathbb{F},\mathfrak{s}|_\mathbb{F})\to(\C,\mathbb{E},\mathfrak{s})$. 

\end{enumerate}
\end{example}

In the rest of this section, we recall the main result of \cite{NOS21} which provides a sufficient condition for an extriangulated category $\C$ and a class $\mathscr{S}$ of morphisms to impose a natural extriangulated structure on the Gabriel-Zisman localization $\widetilde{\C}:=\C[\mathscr{S}^{-1}]$.
We fix a class $\mathscr{S}$ of morphisms in $\C$ which satisfies {\rm (MS0)} and associate a full subcategory $\N_{\mathscr{S}}\subseteq\C$ as below.

\begin{definition}\label{Def_NS}
Let $\mathscr{S}$ be the above. Define $\N_{\mathscr{S}}\subseteq\C$ to be the full subcategory consisting of objects $N\in\C$ such that both $N\to 0$ and $0\to N$ belong to $\mathscr{S}$. 
It is obvious that $\N_{\mathscr{S}}\subseteq\C$ is an additive subcategory.
\end{definition}

For any subcategory $\N\subseteq\C$,
we will denote the ideal quotient by $p\colon \C\to \overline{\C}=\C/[\N]$, and $\overline{f}$ will denote a morphism in $\overline{\C}$ represented by $f\in\C(X,Y)$.
We put $\overline{\mathscr{S}}:=\{\overline{s}\mid s\in\mathscr{S}\}$.
Note that $\overline{f}=0$ if and only if $f$ factors through an object in $\N$.
In this case, we write $f\in[\N]$.
Also, let $\overline{\mathscr{S}}^*$ be the closure of $\overline{\mathscr{S}}$ with respect to compositions with isomorphisms in $\overline{\C}$.

\begin{theorem}\cite[Thm. 3.5]{NOS21}\label{Thm_Mult_Loc}
Let $\mathscr{S}$ be a class of morphisms with {\rm (MS0)} and consider the ideal quotient $p\colon \C\to\overline{\C}:=\C/[\N_\mathscr{S}]$ and the localization $Q\colon \C\to\C[\mathscr{S}^{-1}]=:\widetilde{\C}$.
We assume that $\overline{\mathscr{S}}=\overline{\mathscr{S}}^*$ holds.
\begin{enumerate}
\item[{\rm (1)}]
Suppose that $\overline{\mathscr{S}}$ satisfies the following conditions {\rm (MR1),\ldots,(MR4)}.
Then we obtain an extriangulated category $(\widetilde{\C},\widetilde{\mathbb{E}},\widetilde{\mathfrak{s}})$ together with an exact functor $(Q,\mu)\colon (\C,\mathbb{E},\mathfrak{s})\to (\widetilde{\C},\widetilde{\mathbb{E}},\widetilde{\mathfrak{s}})$.
\begin{itemize}
\item[{\rm (MR1)}] $\overline{\mathscr{S}}$ satisfies $2$-out-of-$3$ with respect to compositions in $\overline{\C}$, i.e., for any composed morphism $g\circ f$, if two of $\{f,g,g\circ f\}$ belong to $\overline{\mathscr{S}}$, then so does the third.
\item[{\rm (MR2)}] $\overline{\mathscr{S}}$ is a multiplicative system in $\overline{\C}$.
\item[{\rm (MR3)}] Let $\langle A\overset{x}{\lra}B\overset{y}{\lra}C,\delta\rangle$, $\langle A'\overset{x'}{\lra}B'\overset{y'}{\lra}C',\delta'\rangle$ be any pair of $\mathfrak{s}$-triangles, and let $a\in\C(A,A'),c\in\C(C,C')$ be any pair of morphisms satisfying $a_*\delta=c^*\delta'$. If $\overline{a}$ and $\overline{c}$ belong to $\overline{\mathscr{S}}$, then there exists $\overline{b}\in\overline{\mathscr{S}}(B,B')$ which satisfies $\overline{b}\circ\overline{x}=\overline{x}'\circ\overline{a}$ and $\overline{c}\circ\overline{y}=\overline{y}'\circ\overline{b}$.
\item[{\rm (MR4)}] $\overline{\mathcal{M}}_{\mathsf{inf}}:=\{ \overline{v}\circ \overline{x}\circ \overline{u}\mid x\ \text{is an}\ \mathfrak{s}\text{-inflation}, \overline{u},\overline{v}\in\overline{\mathscr{S}}\}$ is closed under composition in $\overline{\C}$.
Dually, $\overline{\mathcal{M}}_{\mathsf{def}}:=\{ \overline{v}\circ \overline{y}\circ \overline{u}\mid y\ \text{is an}\ \mathfrak{s}\text{-deflation}, \overline{u},\overline{v}\in\overline{\mathscr{S}}\}$ is closed under compositions.
\end{itemize}

\item[{\rm (2)}]
The exact functor $(Q,\mu)\colon (\C,\mathbb{E},\mathfrak{s})\to (\widetilde{\C},\widetilde{\mathbb{E}},\widetilde{\mathfrak{s}})$ obtained in {\rm (1)} is characterized by the following universality.
\begin{itemize}
\item[{\rm (i)}]
For any exact functor $(F,\phi)\colon (\C,\mathbb{E},\mathfrak{s})\to (\D,\mathbb{F},\mathfrak{t})$ such that $F(s)$ is an isomorphism for any $s\in\mathscr{S}$, there exists a unique exact functor $(\widetilde{F},\widetilde{\phi})\colon (\widetilde{\C},\widetilde{\mathbb{E}},\widetilde{\mathfrak{s}})\to (\D,\mathbb{F},\mathfrak{t})$ with $(F,\phi)=(\widetilde{F},\widetilde{\phi})\circ (Q,\mu)$.
\item[{\rm (ii)}] For any pair of exact functors $(F,\phi),(G,\psi)\colon (\C,\mathbb{E},\mathfrak{s})\to (\D,\mathbb{F},\mathfrak{t})$ which send  any $s\in\mathscr{S}$ to isomorphisms, let $(\widetilde{F},\widetilde{\phi}),(\widetilde{G},\widetilde{\psi})\colon (\widetilde{\C},\widetilde{\mathbb{E}},\widetilde{\mathfrak{s}})\to (\D,\mathbb{F},\mathfrak{t})$ be the exact functors obtained in {\rm (i)}. Then for any natural transformation $\eta\colon (F,\phi)\to(G,\psi)$ of exact functors, there uniquely exists a natural transformation $\widetilde{\eta}\colon (\widetilde{F},\widetilde{\phi})\to(\widetilde{G},\widetilde{\psi})$ of exact functors satisfying $\eta=\widetilde{\eta}\circ (Q,\mu)$.
\end{itemize}
\end{enumerate}
The exact functor $(Q,\mu)\colon (\C,\mathbb{E},\mathfrak{s})\to (\widetilde{\C},\widetilde{\mathbb{E}},\widetilde{\mathfrak{s}})$ or the resulting category $(\widetilde{\C},\widetilde{\mathbb{E}},\widetilde{\mathfrak{s}})$ obtained in {\rm (1)} is called an \emph{extriangulated localization} of $\C$ with respect to $\mathscr{S}$.
\end{theorem}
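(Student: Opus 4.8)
The plan is to reduce the whole construction to a localization of the ideal quotient $\overline{\C}$ and then build the extriangulated data by a single filtered colimit along $\overline{\mathscr{S}}$. First I would note that every object of $\N_{\mathscr{S}}$ becomes a zero object of $\widetilde{\C}=\C[\mathscr{S}^{-1}]$, so $Q$ annihilates the ideal $[\N_{\mathscr{S}}]$ and hence factors as $\C\xrightarrow{p}\overline{\C}\to\widetilde{\C}$; conversely this second functor inverts $\overline{\mathscr{S}}$, so $\widetilde{\C}\simeq\overline{\C}[\overline{\mathscr{S}}^{-1}]$. By \textnormal{(MR2)} the latter localization admits a calculus of left and right fractions, so every morphism of $\widetilde{\C}$ is a roof $\overline{s}^{-1}\overline{f}$ with $\overline{s}\in\overline{\mathscr{S}}$, and composition, addition, and equality of morphisms are governed by the usual roof calculus; this is what will make the diagram chases below tractable.

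Next I would define the bifunctor $\widetilde{\mathbb{E}}\colon\widetilde{\C}^{\op}\times\widetilde{\C}\to\Ab$ on objects by
\[
\widetilde{\mathbb{E}}(C,A):=\varinjlim\ \mathbb{E}(C',A'),
\]
the colimit running over the filtered diagram of $\overline{\mathscr{S}}$-morphisms $C'\to C$ and $A\to A'$ (the transition maps being pullback in the first and pushforward in the second variable); the $2$-out-of-$3$ property \textnormal{(MR1)} together with \textnormal{(MR2)} makes this indexing filtered, so the colimit exists in $\Ab$ and is additive in each variable, and $\mu_{C,A}\colon\mathbb{E}(C,A)\to\widetilde{\mathbb{E}}(QC,QA)$ is simply the canonical structure map. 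For the realization $\widetilde{\mathfrak{s}}$, a class of $\widetilde{\mathbb{E}}(C,A)$ is represented by an honest $\delta\in\mathbb{E}(C',A')$ with $\mathfrak{s}$-conflation $A'\lra B'\lra C'$, and transporting this conflation along the fractions $C\leftarrow C'$ and $A\to A'$ inside $\widetilde{\C}$ yields a conflation whose class I would declare to be $\widetilde{\mathfrak{s}}$ of the extension. The delicate point, independence of the representative, follows because two representatives are linked by a zig-zag of morphisms in $\overline{\mathscr{S}}$, and \textnormal{(MR1)} together with \textnormal{(MR3)} — which completes the middle term of a morphism of $\mathfrak{s}$-triangles after passing to $\overline{\C}$ — forces the two transported conflations to become equivalent in $\widetilde{\C}$.

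Then I would verify the axioms of an extriangulated category for $(\widetilde{\C},\widetilde{\mathbb{E}},\widetilde{\mathfrak{s}})$ in the sense of \cite{NP19}. Functoriality of $\widetilde{\mathfrak{s}}$ under $\widetilde{\mathbb{E}}$-pushout and $\widetilde{\mathbb{E}}$-pullback, together with the more elementary axioms and their duals, reduces through the roof calculus to the corresponding facts for $\mathbb{E}$ after refining along $\overline{\mathscr{S}}$, with \textnormal{(MR3)} supplying the needed completions of morphisms between $\widetilde{\mathfrak{s}}$-triangles. The substantive axiom is the octahedron-type one and its dual, and this is exactly where \textnormal{(MR4)} is used: by the roof calculus $\overline{\mathcal{M}}_{\mathsf{inf}}$ and $\overline{\mathcal{M}}_{\mathsf{def}}$ are precisely the $\widetilde{\mathfrak{s}}$-inflations and $\widetilde{\mathfrak{s}}$-deflations of $\widetilde{\C}$, so their closedness under composition is what permits the mapping-cone data to be assembled — once a composable pair of $\widetilde{\mathfrak{s}}$-inflations has been refined to a single picture at the $\mathbb{E}$-level, the octahedron axiom for $\mathbb{E}$ finishes the job. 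Exactness of $(Q,\mu)$ is then immediate, since $\mu$ is the colimit structure map and $\mathfrak{s}$-triangles are sent to $\widetilde{\mathfrak{s}}$-triangles by construction of the realization.

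Finally, for part (2): the ordinary Gabriel-Zisman universality produces a unique functor $\widetilde{F}\colon\widetilde{\C}\to\D$ with $F=\widetilde{F}\circ Q$, and the natural transformation $\phi\colon\mathbb{E}\to\mathbb{F}\circ(F^{\op}\times F)$, being compatible with refinement along morphisms that $F$ inverts, factors uniquely through the defining colimit to yield $\widetilde{\phi}\colon\widetilde{\mathbb{E}}\to\mathbb{F}\circ(\widetilde{F}^{\op}\times\widetilde{F})$; its exactness equation is then checked on representatives. Uniqueness of $\widetilde{\phi}$ is forced by the colimit, and statement (ii) is the analogous $2$-categorical descent of a natural transformation $\eta$, the compatibility identity passing to the colimit. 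I expect the genuine obstacle to be concentrated in two places: proving that $\widetilde{\mathfrak{s}}$ is well defined on $\widetilde{\mathbb{E}}$-extension classes, and verifying the octahedron-type axiom and its dual — both are zig-zag/roof arguments in which \textnormal{(MR1)}, \textnormal{(MR3)} and especially \textnormal{(MR4)} carry essentially all of the weight. This is the argument carried out in \cite[Thm.~3.5]{NOS21}.
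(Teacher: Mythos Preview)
The paper does not prove this theorem; it is quoted verbatim from \cite[Thm.~3.5]{NOS21} as background, and immediately after the statement the author writes ``We refer to \cite[Sec.~3]{NOS21} for an explicit form of $\widetilde{\mathbb{E}}$ and $\widetilde{\mathfrak{s}}$.'' So there is no in-paper proof to compare your proposal against.

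That said, your sketch is a faithful outline of the construction in \cite{NOS21}: the factorization through $\overline{\C}$, the definition of $\widetilde{\mathbb{E}}$ as a filtered colimit over $\overline{\mathscr{S}}$-roofs, the role of \textnormal{(MR3)} in making $\widetilde{\mathfrak{s}}$ well defined, and the use of \textnormal{(MR4)} for closure of inflations/deflations under composition are exactly the load-bearing points there. Your final sentence already acknowledges this, so nothing further is needed.
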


We refer to \cite[Sect. 3]{NOS21} for explicit forms of $\widetilde{\mathbb{E}}$ and $\widetilde{\mathfrak{s}}$.
By definition, any $\widetilde{\mathfrak{s}}$-inflation (resp. $\widetilde{\mathfrak{s}}$-deflation) comes from an $\mathfrak{s}$-inflation (resp. $\mathfrak{s}$-deflation) in the original category $(\C, \mathbb{E}, \mathfrak{s})$.
More precisely, we have the lemma below.

\begin{lemma}\cite[Lem. 3.32]{NOS21}\label{lem_inf}
The following holds for any morphism $\alpha$ in $(\widetilde{\C},\widetilde{\mathbb{E}},\widetilde{\mathfrak{s}})$.
\begin{enumerate}
\item[{\rm (1)}] $\alpha$ is an $\widetilde{\mathfrak{s}}$-inflation in $\widetilde{\C}$ if and only if $\alpha=\beta\circ Q(f)\circ \gamma$ holds for some $\mathfrak{s}$-inflation $f$ in $\C$ and isomorphisms $\beta,\gamma$ in $\widetilde{\C}$.
\item[{\rm (2)}] $\alpha$ is an $\widetilde{\mathfrak{s}}$-deflation in $\widetilde{\C}$ if and only if $\alpha=\beta\circ Q(f)\circ\gamma$ holds for some $\mathfrak{s}$-deflation $f$ in $\C$ and isomorphisms $\beta,\gamma$ in $\widetilde{\C}$.
\end{enumerate}
\end{lemma}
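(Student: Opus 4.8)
The plan is to deduce both equivalences from the explicit form of the localized structure $(\widetilde{\C},\widetilde{\mathbb{E}},\widetilde{\mathfrak{s}})$ recalled in \cite[Sec.~3]{NOS21}, together with two elementary facts available in any extriangulated category: (a) an exact functor carries $\mathfrak{s}$-triangles to $\widetilde{\mathfrak{s}}$-triangles, so $Q$ of an $\mathfrak{s}$-inflation (resp.\ $\mathfrak{s}$-deflation) is an $\widetilde{\mathfrak{s}}$-inflation (resp.\ $\widetilde{\mathfrak{s}}$-deflation) by Definition~\ref{Def_exact_functor}(1); and (b) the class of $\mathfrak{s}$-inflations (resp.\ $\mathfrak{s}$-deflations) is stable under composition with isomorphisms on either side, which is routine from the axioms (post-composition transports a conflation along a triple $(\mathrm{id},\beta,\mathrm{id})$, pre-composition corresponds to a pushout along an isomorphism). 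Since (2) is obtained from (1) by passing to opposite categories, I would only treat (1).

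For the ``if'' direction, I would start from $\alpha=\beta\circ Q(f)\circ\gamma$ with $f\in\C(A,B)$ an $\mathfrak{s}$-inflation and $\beta,\gamma$ isomorphisms in $\widetilde{\C}$; choosing an $\mathfrak{s}$-triangle $A\overset{f}{\lra}B\overset{g}{\lra}C\overset{\delta}{\dashrightarrow}$ and applying $(Q,\mu)$ gives the $\widetilde{\mathfrak{s}}$-triangle $Q(A)\overset{Q(f)}{\lra}Q(B)\overset{Q(g)}{\lra}Q(C)\overset{\mu_{C,A}(\delta)}{\dashrightarrow}$, so $Q(f)$ is an $\widetilde{\mathfrak{s}}$-inflation, and fact (b) then upgrades this to $\alpha$ being an $\widetilde{\mathfrak{s}}$-inflation.

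For the ``only if'' direction, I would take an $\widetilde{\mathfrak{s}}$-inflation $\alpha\colon X\to Y$, complete it to an $\widetilde{\mathfrak{s}}$-triangle $X\overset{\alpha}{\lra}Y\overset{\beta'}{\lra}Z\overset{\widetilde{\delta}}{\dashrightarrow}$, and unwind the construction of $\widetilde{\mathbb{E}}$ and its realization in \cite[Sec.~3]{NOS21}: every $\widetilde{\mathbb{E}}$-extension is, up to pullback and pushout along morphisms of $\overline{\mathscr{S}}$ --- which $Q$ inverts --- of the form $\mu_{C,A}(\delta)$ for some $\delta\in\mathbb{E}(C,A)$. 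Hence the $\widetilde{\mathfrak{s}}$-triangle above is isomorphic, as a conflation in $\widetilde{\C}$, to the $(Q,\mu)$-image $Q(A)\overset{Q(f)}{\lra}Q(B)\overset{Q(g)}{\lra}Q(C)\overset{\mu_{C,A}(\delta)}{\dashrightarrow}$ of an $\mathfrak{s}$-triangle $A\overset{f}{\lra}B\overset{g}{\lra}C\overset{\delta}{\dashrightarrow}$ of $\C$, and reading off the vertical isomorphisms of such a conflation isomorphism yields isomorphisms $\gamma\colon X\xrightarrow{\sim}Q(A)$ and $\beta\colon Q(B)\xrightarrow{\sim}Y$ with $\alpha=\beta\circ Q(f)\circ\gamma$, while $f$ is an $\mathfrak{s}$-inflation by construction.

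The main obstacle is precisely this last point: extracting from the somewhat involved explicit description of $\widetilde{\mathbb{E}}$ and $\widetilde{\mathfrak{s}}$ in \cite{NOS21} the clean statement that an arbitrary $\widetilde{\mathfrak{s}}$-triangle is isomorphic, as a conflation, to the $(Q,\mu)$-image of an honest $\mathfrak{s}$-triangle of $\C$ (the informal remark ``$\widetilde{\mathfrak{s}}$-inflations come from $\mathfrak{s}$-inflations'' preceding the lemma being exactly the shadow of this). Once that identification is in place, the passage through isomorphisms in both directions, as well as the ``if'' direction, are bookkeeping.
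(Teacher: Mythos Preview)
The paper does not supply its own proof of this lemma: it is quoted verbatim as \cite[Lem.~3.32]{NOS21} and no proof environment follows. Your proposal is therefore not to be compared against an argument in the present paper, but rather against the source \cite{NOS21}, and what you outline is exactly the natural route one takes from the explicit construction of $(\widetilde{\mathbb{E}},\widetilde{\mathfrak{s}})$ given there. Your ``if'' direction is complete and correct; your ``only if'' direction correctly isolates the one substantive step, namely that every $\widetilde{\mathfrak{s}}$-conflation is isomorphic in $\widetilde{\C}$ to the $(Q,\mu)$-image of an $\mathfrak{s}$-conflation, which is precisely the content of the realization $\widetilde{\mathfrak{s}}$ in \cite[Sec.~3]{NOS21} once one unwinds how $\widetilde{\mathbb{E}}$ is built as a filtered colimit over $\overline{\mathscr{S}}$-fractions. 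You are right that this unwinding is where all the work sits, and you are also right that the informal remark preceding the lemma (``any $\widetilde{\mathfrak{s}}$-inflation comes from an $\mathfrak{s}$-inflation'') is exactly this statement in disguise. There is no gap in your strategy; the only thing missing is the bookkeeping you yourself flag.
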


From a certain subcategory $\N\subseteq \C$,
we can construct a class $\Sn$ of morphisms which satisfies ${\rm (MR1)},\ldots, {\rm (MR4)}$.
Before listing such subcategories $\N$, we introduce the notion of a thick subcategory of an extriangulated category \cite[Def. 4.1]{NOS21}.

\begin{definition}\label{def_thick}
A full subcategory $\N$ of $\C$ is called a \emph{thick} subcategory if it satisfies the following conditions.
\begin{enumerate}
\item
$\N$ is closed under direct summands and isomorphisms.
\item
$\N$ satisfies $2$-out-of-$3$ for $\mathfrak{s}$-conflations. Namely, for any $\mathfrak{s}$-conflation $A\lra B\lra C$, if two of $\{A, B, C\}$ belong to $\N$, then so does the third.
\end{enumerate}
Moreover, a thick subcategory $\N$ is said to be:
\begin{itemize}
\item[-]
\emph{biresolving}, if, for any object $C\in\C$, there exist an $\mathfrak{s}$-inflation $C\to N$ and an $\mathfrak{s}$-deflation $N'\to C$ with $N,N'\in\N$;
\item[-]
\emph{Serre}, if there exists an $\mathfrak{s}$-conflation $A\lra B\lra C$ with $B\in\N$, then we have $A,C\in\N$.
\end{itemize}
\end{definition}

Note that, in the case $\C$ corresponds to a triangulated category, the notion of thick subcategory coincides with the usual one.
Similarly, in the case that $\C$ corresponds to an exact category,
the notion of biresolving (resp. Serre) subcategory coincides with the one defined in \cite[p. 403]{Rum21} (resp. \cite[4.0.35]{C-E98}).

\begin{definition}\label{def_Sn_from_thick}
For a thick subcategory $\N$, we associate the following classes
of morphisms.
\begin{enumerate}
\item
$\L = \{f\in\Mor\C \mid f \ \textnormal{is an $\mathfrak{s}$-inflation with}\ \cone(f)\in\N\}$.
\item
$\R = \{g\in\Mor\C \mid g \ \textnormal{is an $\mathfrak{s}$-deflation with}\ \cocone(g)\in\N\}$.
\end{enumerate}
Define $\Sn$ to be the smallest subclass closed by compositions containing both $\L$ and $\R$.
\end{definition}

It is mentioned in \cite{NOS21} without proof that $\Sn$ satisfies condition {\rm (MS0)}.
The closedness under compositions is obvious, but to be precise, we include a short argument on the closedness under direct sums.
\begin{lemma}\label{lem:Sn_is_closed_under_direct_sums}
The class $\Sn$ is closed under taking finite direct sums.
\end{lemma}
\begin{proof}
Since $\N$ is additive, we can easily check that both $\L$ and $\R$ are closed under taking finite direct sums.
Let us consider a morphism $A_1\xto{f_1}B_1$ in $\L$ and $A_2\xto{f_2}B_2$ in $\R$.
We will show that $f_1\oplus f_2$ belongs to $\Sn$.
Note that there exists a factorization of $f_1\oplus f_2$ as below.
\[
A_1\oplus A_2\xto{f_1\oplus \id_{A_2}}B_1\oplus A_2\xto{\id_{B_1}\oplus f_2}B_1\oplus B_2.
\]
Since the identity belongs to $\L\cap \R$, we know $f\oplus \id_{A_2}\in\L$ and $\id_{B_1}\oplus f_2\in\R$ as mentioned above.
Thus, as $\Sn$ is closed under compositions, $f_1\oplus f_2$ still belongs to $\Sn$.
By mimicking the argument, we can prove the assertion inductively.
\end{proof}

The following is a list of subcategories $\N$ which induces an extriangulated localization with respect to $\Sn$, see \cite[Exam.~4.8, 4.9, 4.17, 4.33]{NOS21}.

\begin{example}\label{ex_list}
\begin{enumerate}
\item (Verdier quotient.) Let $\C$ be a triangulated category and $\N$ a thick subcategory of $\C$.
Then, the associated class $\Sn$ of morphisms satisfies ${\rm (MS0)} ,{\rm (MR1),\cdots,{\rm (MR4)}}$ and $\Sn=\L=\R$.
The extriangulated localization $Q$ is nothing but the usual Verdier quotient.

\item (Serre quotient.) Let $\C$ be an abelian category and $\N$ a Serre subcategory of $\C$.
Then, we have $\Sn=\L\circ\R$ which admits an extriangulated localization $Q$.
Such a localization is nothing but the usual Serre quotient.

\item (Biresolving quotient.) Let $\C$ be an extriangulated category and $\N$ a biresolving subcategory of $\C$.
Then, we have $\Sn=\L=\R$ which admits an extriangulated localization $Q$.
In this case, the resulting category $(\widetilde{\C},\widetilde{\mathbb{E}},\widetilde{\mathfrak{s}})$ corresponds to a triangulated category \cite[Cor.~4.27]{NOS21}, which is an extriangulated version of \cite[Thm. 5]{Rum21}.
\end{enumerate}
\end{example}

\section{Localization with respect to extension-closed subcategories}\label{sec_Localization}
\emph{In the rest, let $\C$ be a triangulated category with suspension $[1]$ and $\N$ a full subcategory of $\C$ which is closed under direct summands, isomorphisms and extensions.}
This section is devoted to formulate a localization of $\C$ with respect to $\N$ such that the resulting category $\widetilde{\C}_\N$ inherits a natural extriangulated structure from $\C$.

\subsection{A relative structure}
Remind that a triangulated category $\C$ admits a natural extriangulated structure $(\C,\mathbb{E},\mathfrak{s})$, see Proposition \ref{prop_extri_to_tri}.
We will show that any extension-closed subcategory $\N\subseteq\C$ determines natural extriangulated structures on $(\C,\mathbb{E},\mathfrak{s})$.
The following proposition is a main result in this subsection.


\begin{proposition}\label{prop_relative_str}
Let $\N$ be a full subcategory of $\C$ which is closed under direct summands, isomorphisms and extensions.
For any objects $A,C\in\C$, we define subsets of $\mathbb{E}(C,A)$ as follows.
\begin{enumerate}
\item[\textnormal{(1)}]
A subset $\mathbb{E}^L_\N(C,A)$ is defined as the set of morphisms $h\colon C\to A[1]$ satisfying the condition:
\begin{enumerate}
\item[{\rm (Lex)}] For any morphism $N\xto{x}C$ with $N\in\N$, $h\circ x$ factors through an object in $\N[1]$.
\end{enumerate}
\item[\textnormal{(2)}]
A subset $\mathbb{E}^R_\N(C,A)$ is defined as the set of morphisms $h\colon C\to A[1]$ satisfying the condition:
\begin{enumerate}
\item[{\rm (Rex)}] For any morphism $A\xto{y}N$ with $N\in\N$, $y\circ h[-1]$ factors through an object in $\N[-1]$.
\end{enumerate}
\end{enumerate}
The conditions ${\rm (Lex)}$ and ${\rm (Rex)}$ for an $\mathfrak{s}$-triangle $A\overset{f}{\lra}B\overset{g}{\lra}C\overset{h}{\dashrightarrow}$ are respectively understood through the following morphisms of $\mathfrak{s}$-triangles:
\[
\xymatrix{
N[-1]\ar[d]\ar[r]^{[\N]}&A\ar@{=}[d]\ar[r]^{f'}&B'\ar[d]\ar[r]^{g'}&N\ar[d]^x&A\ar[r]^{f}\ar[d]_y\ar@{}[rd]|{\rm (wPO)}&B\ar[d]\ar[r]^{g}&C\ar@{=}[d]\ar[r]^h&A[1]\ar[d]\\
C[-1]\ar[r]^{h[-1]}&A\ar[r]^{f}&B\ar[r]^g&C\ar@{}[lu]|{\rm (wPB)}&N\ar[r]^{f''}&B''\ar[r]^{g''}&C\ar[r]^{[\N]}&N[1]
}
\]
where the arrows labeled with $[\N]$ factor through objects in $\N$.
Then, both $\mathbb{E}^L_\N$ and $\mathbb{E}^R_\N$ give rise to closed subfunctors of $\mathbb{E}$.
In particular, putting $\mathbb{E}_\N:=\mathbb{E}^L_\N\cap\mathbb{E}^R_\N$, we have three extriangulated structures
\[(\C,\mathbb{E}^L_\N,\mathfrak{s}^L_\N),\quad (\C,\mathbb{E}^R_\N,\mathfrak{s}^R_\N),\quad (\C,\mathbb{E}_\N,\mathfrak{s}_\N)
\]
which are relative to $(\C,\mathbb{E},\mathfrak{s})$.
Here $\mathfrak{s}_\N$ is a restriction of $\mathfrak{s}$ to $\mathbb{E}_\N$ and other undefined symbols are used in similar meanings.
\end{proposition}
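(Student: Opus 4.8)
The plan is to prove the statement for $\mathbb{E}^L_\N$ in detail, to deduce the case of $\mathbb{E}^R_\N$ by passing to the opposite category $\C^{\op}$ (in which $\N^{\op}$ remains closed under direct summands and extensions, and $\mathbb{E}^R_\N$ turns into the ``$L$''-version $\mathbb{E}^L_{\N^{\op}}$), and finally to treat the intersection $\mathbb{E}_\N=\mathbb{E}^L_\N\cap\mathbb{E}^R_\N$. Identifying $\mathbb{E}(C,A)$ with $\C(C,A[1])$, one first checks that $\mathbb{E}^L_\N$ is an additive subfunctor of $\mathbb{E}$: the zero extension satisfies {\rm (Lex)}; if $h\circ x$ and $h'\circ x$ factor through $M[1]$ and $M'[1]$ with $M,M'\in\N$, then $(h+h')\circ x$ factors through $(M\oplus M')[1]$ with $M\oplus M'\in\N$; and precomposing $x$ with a morphism $C'\to C$, or postcomposing a factorization with $a[1]$ for $a\colon A\to A'$, visibly preserves the property of factoring through $\N[1]$. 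Granting this, Proposition~\ref{prop_relative} reduces the closedness of $\mathbb{E}^L_\N$ to showing that $\mathfrak{s}^L_\N$-deflations are closed under composition.

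The heart of the argument will be the following reformulation of {\rm (Lex)}: for an $\mathfrak{s}$-triangle $A\to B\xrightarrow{g}C\xrightarrow{h}A[1]$ one has $h\in\mathbb{E}^L_\N(C,A)$ if and only if every morphism $x\colon N\to C$ with $N\in\N$ factors through $g$ after precomposition with an $\mathfrak{s}|_\N$-deflation, i.e.\ there exist an $\mathfrak{s}|_\N$-deflation $\theta\colon P\to N$ with $P\in\N$ and a morphism $\xi\colon P\to B$ with $g\circ\xi=x\circ\theta$. For the ``only if'' direction I would write $h\circ x=\phi\circ\psi$ with $\psi\colon N\to M[1]$ and $M\in\N$, complete $\psi$ to a triangle $M\to P\xrightarrow{\theta}N\xrightarrow{\psi}M[1]$ (so $P\in\N$ because $M,N\in\N$ and $\N$ is extension-closed, whence $\theta$ is an $\mathfrak{s}|_\N$-deflation), and use $h\circ x\circ\theta=\phi\circ(\psi\circ\theta)=0$ together with exactness of $\C(P,B)\to\C(P,C)\to\C(P,A[1])$ to produce $\xi$. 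The ``if'' direction is the same computation read backwards: $g\circ\xi=x\circ\theta$ forces $h\circ x\circ\theta=0$, and exactness of $\C(M[1],A[1])\to\C(N,A[1])\to\C(P,A[1])$ (coming from the triangle defining $\theta$, with $M=\cocone(\theta)\in\N$) shows that $h\circ x$ factors through $M[1]\in\N[1]$. I expect this equivalence to be the main obstacle, and it is the only place where extension-closedness of $\N$ is used in an essential way.

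Once the reformulation is available, closedness of $\mathfrak{s}^L_\N$-deflations under composition becomes a short two-step diagram chase: given $\mathfrak{s}^L_\N$-deflations $g\colon B\to C$ and $g'\colon B''\to B$ and a morphism $x\colon N\to C$ with $N\in\N$, first lift $x$ along $g$ after an $\mathfrak{s}|_\N$-deflation $\theta_1\colon P_1\to N$ (with $P_1\in\N$), obtaining $\xi_1\colon P_1\to B$ with $g\circ\xi_1=x\circ\theta_1$, then lift $\xi_1$ along $g'$ after an $\mathfrak{s}|_\N$-deflation $\theta_2\colon P_2\to P_1$ (with $P_2\in\N$), obtaining $\xi_2\colon P_2\to B''$ with $g'\circ\xi_2=\xi_1\circ\theta_2$. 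Since $(\N,\mathbb{E}|_\N,\mathfrak{s}|_\N)$ is an extriangulated category by Proposition~\ref{prop:closed_under_operation}, the composite $\theta_1\circ\theta_2\colon P_2\to N$ is again an $\mathfrak{s}|_\N$-deflation, and $(g\circ g')\circ\xi_2=x\circ(\theta_1\circ\theta_2)$; applying the ``if'' direction of the reformulation to $g\circ g'$ then shows that its connecting map lies in $\mathbb{E}^L_\N$. Hence $\mathbb{E}^L_\N$, and dually $\mathbb{E}^R_\N$, is a closed subfunctor.

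For $\mathbb{E}_\N$, it is an additive subfunctor as an intersection of two such. Because the cone of a morphism in the triangulated category $\C$ is unique up to isomorphism compatible with the connecting maps, and $\mathbb{E}^L_\N$, $\mathbb{E}^R_\N$ are subfunctors, a morphism is an $\mathfrak{s}_\N$-inflation precisely when it is simultaneously an $\mathfrak{s}^L_\N$- and an $\mathfrak{s}^R_\N$-inflation; as each of the latter two classes is closed under composition (both subfunctors being closed), so is their intersection, and Proposition~\ref{prop_relative} applies once more. A secondary point to watch throughout is the bookkeeping of shifts and signs when translating between the $\mathfrak{s}$-triangle description in the statement — phrased via the weak pullback {\rm (Pb)} and the weak pushout {\rm (Po)} — and the triangulated reformulation used above.
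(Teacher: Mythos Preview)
Your argument is correct and complete, but it is genuinely different from the paper's. The paper verifies closedness of $\mathbb{E}^L_\N$ by checking that $\mathfrak{s}^L_\N$-\emph{inflations} compose: given $h\in\mathbb{E}^L_\N(C,A)$ and $b''\in\mathbb{E}^L_\N(B'',B)$, it forms the composite inflation $b\circ f$ via the octahedral axiom, then shows directly that the resulting connecting map $z$ satisfies {\rm (Lex)} through a chain of homotopy pullbacks, finishing with a forward reference to Lemma~\ref{lem_phantom} (the characterization of $\overline{h}=0$ via epicness of $\overline{g}$ in $\overline{\C}$) to conclude $\overline{\phi}=0$. You instead work on the \emph{deflation} side and introduce the lifting-property reformulation of {\rm (Lex)}: $h\in\mathbb{E}^L_\N(C,A)$ if and only if every $N\to C$ with $N\in\N$ lifts through $g$ after an $\mathfrak{s}|_\N$-deflation. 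This is a nice observation; it packages the extension-closedness of $\N$ once and for all, after which the two-step chase is essentially formal and relies only on Proposition~\ref{prop:closed_under_operation} (to know $\mathfrak{s}|_\N$-deflations compose). Your route is more self-contained --- it avoids the forward dependence on Lemma~\ref{lem_phantom} --- and arguably more conceptual, while the paper's argument is more hands-on but reuses machinery (the ideal quotient $\overline{\C}$ and Lemma~\ref{lem_phantom}) that is central to the rest of Section~\ref{sec_Localization} anyway. Your handling of $\mathbb{E}^R_\N$ by duality and of $\mathbb{E}_\N$ as the intersection is fine; the point that a morphism is an $\mathfrak{s}_\N$-inflation exactly when it is simultaneously an $\mathfrak{s}^L_\N$- and $\mathfrak{s}^R_\N$-inflation (by uniqueness of cones) is the right way to close the argument.
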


Before proving this proposition, we observe several extremal cases to understand what the conditions {\rm (Lex)} and {\rm (Rex)} mean.

\begin{example}\label{ex_relative_str_thick}
If a given subcategory $\N$ is thick in the triangulated category $(\C,\mathbb{E},\mathfrak{s})$, then we have $\mathbb{E}=\mathbb{E}^L_\N=\mathbb{E}^R_\N$.
In particular, the relative structure $(\C,\mathbb{E}_\N,\mathfrak{s}_\N)$ coincides with the original triangulated structure $(\C,\mathbb{E},\mathfrak{s})$.
\end{example}
\begin{proof}
Since $\N$ is thick, the conditions {\rm (Rex)} and {\rm (Lex)} in Proposition \ref{prop_relative_str} always hold for any $\mathfrak{s}$-triangle.
\end{proof}

\begin{example}\label{ex_relative_str_rigid1}
Let $\N$ be a rigid subcategory of $\C$, namely, $\mathbb{E}(N,N')=0$ for any $N,N'\in\N$.
Then, $\N$ is projective and injective in $(\C,\mathbb{E}_\N,\mathfrak{s}_\N)$ in the sense that $\mathbb{E}_\N(\N,-)=\mathbb{E}_\N(-,\N)=0$ (see \cite[Def. 3.23]{NP19}).
The structure $(\C,\mathbb{E}_\N,\mathfrak{s}_\N)$ is maximal with respect to the above property.
\end{example}
\begin{proof}
We shall show that any $N\in\N$ is projective in $(\C,\mathbb{E}_\N,\mathfrak{s}_\N)$.
Consider an $\mathfrak{s}_\N$-triangle $A\overset{f}{\lra} B\overset{g}{\lra} N\overset{h}{\dashrightarrow}$ which corresponds to an element in $\mathbb{E}_\N(N,A)$.
The condition ${\rm (Lex)}$ shows that $h$ factors through an object in $\N[1]$.
By the rigidity of $\N$, we get $h=0$ and the $\mathfrak{s}_\N$-triangle splits, which shows the projectivity of $N$.

To show the maximality of $(\C,\mathbb{E}_\N,\mathfrak{s}_\N)$, let us consider a relative structure $(\C,\mathbb{F},\mathfrak{t})$ with $\mathbb{F}(\N,-)=\mathbb{F}(-,\N)=0$.
It suffices to show that any $\mathfrak{t}$-triangle $A\overset{f}{\lra} B\overset{g}{\lra} C\overset{h}{\dashrightarrow}$ corresponding to an element in $\mathbb{F}(C,A)$ satisfies the condition ${\rm (Lex)}$.
For any morphism $N\xto{x}C$ from $N\in\N$, by the projectivity, $h\circ x$ should be zero.
Combining the dual argument, we finish the proof.
\end{proof}

\begin{example}\label{ex_relative_str_rigid}
Let $\X$ be a rigid contravariantly finite subcategory of $\C$ and put $\N:=\{C\in\C\mid (\X,C)=0\}$.
Since $\N$ is extension-closed in $\C$, we can consider the relative structure $(\C,\mathbb{E}^R_\N,\mathfrak{s}^R_\N)$.
In this case, $\mathbb{E}^R_\N$ can be understood as
\[
\mathbb{E}^R_\N(C,A)=\{h\in\mathbb{E}(C,A)\mid h\circ y=0\text{\ for all\ }y\colon X\to C\text{\ with\ }X\in\X\}.
\]
Such a structure is investgated in \cite{JS22} in terms of Grothendieck groups.
The idea goes back to \cite{PPPP19} where it was introduced to investigate relations between $\mathbf{g}$-vectors and cluster categories.
The above equality will be checked in Example \ref{ex_relative_str_rigid2}.
\end{example}

The proof of Proposition \ref{prop_relative_str} will be completed after a few auxiliary lemmas.
To this end, we need to consider the class $\mathscr{S}'_\N$ of morphisms $g\in\Mor\C$ which induces a triangle $A\overset{f}{\lra}B\overset{g}{\lra}C\overset{h}{\lra}A[1]$ with $\overline{f}=0$ and $\overline{h}=0$.
Useful characterizations for $\mathscr{S}'_\N$ will be given in Lemma \ref{lem_phantom}.
Remind that we write $f\in [\N]$ if $f$ factors through an object in $\N$.


\begin{lemma}\label{lem_extension_closed}
Let $A\overset{f}{\lra} B\overset{g}{\lra} C\overset{h}{\lra} A[1]$ be a triangle in $\C$.
Then the following assertions hold.
\begin{enumerate}
\item[{\rm (1)}]
If $f\in [\N]$  and $C\in\N$, then $B\in\N$.
\item[{\rm (2)}]
If $h\in [\N]$ and $B\in\N$, then $C\in\N$.
\end{enumerate}
\end{lemma}
\begin{proof}
We only check the assertion (1), since (2) can be checked dually.
By the assumption, $f$ admits a factorization $f\colon A\xto{f_1}N\xto{f_2}B$.
Taking a homotopy pushout of $f$ along $f_1$, we get the following commutative diagram made of triangles.
\[
\xymatrix{
A\ar[r]^f\ar[d]_{f_1}\ar@{}[rd]|{\rm (wPO)}&B\ar[r]^g\ar[d]^b&C\ar[r]^h\ar@{=}[d]&A[1]\ar[d]\\
N\ar[r]&B'\ar[r]&C\ar[r]&N[1]
}
\]
Since $f$ factors through $f_1$, $b$ is a section.
If $C\in\N$, since $\N$ is closed under taking extensions and direct summands, we get $B'\in\N$ and $B\in\N$.
\end{proof}

\begin{lemma}\label{lem_phantom}
For a triangle $A\overset{f}{\lra} B\overset{g}{\lra} C\overset{h}{\lra} A[1]$ in $\C$, the following are equivalent.
\begin{enumerate}
\item[{\rm (i)}]
$\overline{h}=0$ in $\overline{\C}$.
\item[{\rm (ii)}]
The morphism $g$ admits a factorization $g=g_2\circ g_1$ with $\cone(g_1)\in\N$ and $g_2$ being a retraction.
\item[{\rm (iii)}]
The morphism $\overline{g}$ is epic in $\overline{\C}$.
\end{enumerate}
Dually the following assertions are equivalent.
\begin{enumerate}
\item[{\rm (i')}]
$\overline{f}=0$ in $\overline{\C}$.
\item[{\rm (ii')}]
The morphism $g$ admits a factorization $g=g_2\circ g_1$ with $\cocone(g_2)\in\N$ and $g_1$ being a section.
\item[{\rm (iii')}]
The morphism $\overline{g}$ is monic in $\overline{\C}$.
\end{enumerate}
In particular, $g$ belongs to $\mathscr{S}'_\N$ if and only if $\overline{g}$ is monic and epic in $\overline{\C}$.
\end{lemma}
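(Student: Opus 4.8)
The plan is to prove the cyclic implications $\mathrm{(i)}\Rightarrow\mathrm{(ii)}\Rightarrow\mathrm{(iii)}\Rightarrow\mathrm{(i)}$, and then to deduce the primed statements by running exactly the same arguments in $\C^{\op}$ (which is triangulated with shift $[-1]$, and in which $\N$ remains extension-closed), after rotating so that $g$ again occupies the middle position of the triangle. The closing assertion is then immediate: $g\in\mathscr{S}'_\N$ means precisely $\overline{f}=0$ and $\overline{h}=0$, which by $\mathrm{(i)}\Leftrightarrow\mathrm{(iii)}$ together with its dual says exactly that $\overline{g}$ is simultaneously epic and monic in $\overline{\C}$. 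The implication $\mathrm{(iii)}\Rightarrow\mathrm{(i)}$ is the cheap one: $h\circ g=0$ holds in $\C$, hence $\overline{h}\circ\overline{g}=0$ in $\overline{\C}$, so $\overline{g}$ epic forces $\overline{h}=0$.

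\textbf{The key implication $\mathrm{(i)}\Rightarrow\mathrm{(ii)}$.} I would start by writing $h=h_{2}\circ h_{1}$ with $h_{1}\colon C\to N$, $h_{2}\colon N\to A[1]$ and $N\in\N$, and passing to the rotated triangle $C[-1]\xrightarrow{h[-1]}A\xrightarrow{f}B\xrightarrow{g}C$. Applying the octahedral axiom to the factorization $h[-1]=h_{2}[-1]\circ h_{1}[-1]$ of $C[-1]\to A$ through $N[-1]$ produces, with $M:=\cone(h_{1}[-1])$, a morphism $\mu\colon M\to B$ satisfying $g\circ\mu=\beta$, where $\beta\colon M\to C$ is the connecting morphism of the triangle $C[-1]\to N[-1]\to M\xrightarrow{\beta}C$; rotating that triangle exhibits $M\xrightarrow{\beta}C\xrightarrow{h_{1}}N$ as a triangle, so $\cone(\beta)\cong N\in\N$. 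I would then take the homotopy pushout of $B\xleftarrow{\mu}M\xrightarrow{\beta}C$:
\[
\xymatrix@R=16pt@C=16pt{
M\ar[r]^{\beta}\ar[d]_{\mu}\ar@{}[rd]|{\rm (Po)}&C\ar[d]^{c}\\
B\ar[r]^{g_{1}}&Q
}
\]
Here $\cone(g_{1}\colon B\to Q)\cong\cone(\beta)\cong N\in\N$, while $g\circ\mu=\mathrm{id}_{C}\circ\beta$ lets the weak pushout property of $\mathrm{(Po)}$ furnish $g_{2}\colon Q\to C$ with $g_{2}\circ g_{1}=g$ and $g_{2}\circ c=\mathrm{id}_{C}$. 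Thus $g_{2}$ is a retraction, giving the required factorization $g=g_{2}\circ g_{1}$ with $\cone(g_{1})\in\N$.

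\textbf{The implication $\mathrm{(ii)}\Rightarrow\mathrm{(iii)}$.} First I would record the auxiliary fact that \emph{any $\phi\colon X\to Y$ with $\cone(\phi)\in\N$ has $\overline{\phi}$ epic in $\overline{\C}$}: given $\psi\colon Y\to Z$ with $\psi\circ\phi=b\circ a$ for some $a\colon X\to N_{1}$, $N_{1}\in\N$, the homotopy pushout of $N_{1}\xleftarrow{a}X\xrightarrow{\phi}Y$ yields an object $P$ sitting in a triangle $N_{1}\to P\to\cone(\phi)\to N_{1}[1]$; since $\N$ is extension-closed, $P\in\N$, and the weak pushout property factors $\psi$ through $P$, so $\overline{\psi}=0$. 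Applying this to the factorization of $\mathrm{(ii)}$: the section of the retraction $g_{2}$ descends to $\overline{\C}$, so $\overline{g_{2}}$ is a split epimorphism, and $\overline{g_{1}}$ is epic by the auxiliary fact; hence $\overline{g}=\overline{g_{2}}\circ\overline{g_{1}}$ is epic.

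\textbf{The hard part.} Everything beyond $\mathrm{(i)}\Rightarrow\mathrm{(ii)}$ is formal or routine: the dual chain $\mathrm{(i')}\Rightarrow\mathrm{(ii')}\Rightarrow\mathrm{(iii')}$ is the same argument transported to $\C^{\op}$, and the concluding equivalence merely combines the two chains. The genuinely substantive point is $\mathrm{(i)}\Rightarrow\mathrm{(ii)}$ — manufacturing, out of the mere homological datum that $h$ factors through $\N$, an actual object $Q$ witnessing a factorization of $g$ through a retraction; the mechanism that makes it work is that the cone $M$ of $h_{1}[-1]$ maps to $C$ with cone precisely $N\in\N$, so pushing $g$ out along $M\to C$ simultaneously creates the retraction $g_{2}$ and forces $\cone(g_{1})\cong N\in\N$.
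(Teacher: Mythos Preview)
Your proof is correct, and while it rests on the same essential ingredients as the paper's argument (homotopy pushouts/pullbacks plus extension-closedness of $\N$), the organization is genuinely different. The paper proves (i)$\Leftrightarrow$(ii) and (i)$\Leftrightarrow$(iii) as two separate equivalences rather than a single cycle. For (i)$\Rightarrow$(ii) the paper takes a homotopy \emph{pullback} of $h$ along $h_2$: this directly yields an object $\widetilde{C}$ together with a retraction $\widetilde{C}\to C$ (because $h$ factors through $h_2$) and a map $\widetilde{g}\colon B\to\widetilde{C}$ with $\cone(\widetilde{g})\cong N$, so the factorization appears in one step without the preliminary octahedron. Your route---first the octahedral axiom to manufacture $M\xrightarrow{\mu}B$ with $g\circ\mu=\beta$, then a pushout along $\mu$---is a step longer, but it pays off in (ii)$\Rightarrow$(iii), which becomes a clean corollary of the reusable auxiliary fact ``$\cone(\phi)\in\N\Rightarrow\overline{\phi}$ epic in $\overline{\C}$''. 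The paper instead proves (i)$\Rightarrow$(iii) directly, by essentially the same pushout-and-extension-closedness argument as your auxiliary fact but applied in situ to the map $\widetilde{g}$ already constructed; in exchange it must separately dispatch (ii)$\Rightarrow$(i) via the octahedral axiom, a step your cyclic arrangement avoids. Neither approach is strictly shorter; the paper's pullback for (i)$\Rightarrow$(ii) is more economical, while your version isolates a lemma of independent utility.
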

\begin{proof}
(i) $\Rightarrow$ (ii): 
By the assumption, $h$ admits a factorization $h\colon C\xto{h_1}N\xto{h_2}A[1]$ with $N\in\N$.
Taking a homotopy pullback of $h$ along $h_2$ yields the commutative diagram below
\begin{equation}\label{diag_phantom}
\xymatrix{
N[-1]\ar[d]_{h_2[-1]}\ar[r]&B\ar[r]^{\widetilde{g}}\ar@{=}[d]&\widetilde{C}\ar[r]^{\widetilde{h}}\ar[d]_c&N\ar[d]^{h_2}\\
A\ar[r]^f&B\ar[r]^g&C\ar[r]^h&A[1]\ar@{}[lu]|{\rm (wPB)}
}
\end{equation}
where all rows are triangles.
Since $h$ factors through $h_2$, $c$ is a retraction.
Thus we have a desired factorization $g=c\circ \widetilde{g}$.

(ii) $\Rightarrow$ (i): Due to the octahedral axiom, the assertion can be easily checked.

(i) $\Leftrightarrow$ (iii):
The implication (iii) $\Rightarrow$ (i) follows from $h\circ g=0$.
To show the converse, we assume that $h$ has a factorization $h\colon C\xto{h_1}N\xto{h_2}A[1]$ with $N\in\N$ and let $C\xto{x}X$ be a morphism with $\overline{x}\circ\overline{g}=0$.
By taking a homotopy pullback of $h$ along $h_2$, we obtain the same diagram as (\ref{diag_phantom}).
Our assumption shows that $x\circ g$ admits a factorization $x\circ g\colon B\xto{y_1}N'\xto{y_2}X$ with $N'\in\N$.
Taking a homotopy pushout of $\widetilde{g}$ along $y_1$, we have the following morphism of triangles.
\[
\xymatrix{
N[-1]\ar@{=}[d]\ar[r]&B\ar@{}[rd]|{\rm (wPO)}\ar[d]_{y_1}\ar[r]^{\widetilde{g}}&\widetilde{C}\ar[r]^{\widetilde{h}}\ar[d]^a&N\ar@{=}[d]\\
N[-1]\ar[r]&N'\ar[r]&N''\ar[r]&N
}
\]
Since $\N$ is extension-closed, we get $N''\in\N$.
Then we get $x\circ g=x\circ c\circ \widetilde{g}=y_2\circ y_1$ which induces a morphism $b\colon N''\to X$ with $b\circ a=x\circ c$.
The morphism $b$ is denoted by the dotted arrow in the commutative diagram below.
\[
\xymatrix{
B\ar@{}[rd]|{\rm (wPO)}\ar[r]^{\widetilde{g}}\ar[d]_{y_1}&\widetilde{C}\ar[d]^a\ar@/^10pt/[rdd]^-{x\circ c}&\\
N'\ar[r]\ar@/_10pt/[rrd]_{y_2}&N''\ar@{.>}[rd]^-{^{\exists}b}&\\
&&X
}
\]
We denote by $C\xto{c'}\widetilde{C}$ the section corresponding to the retraction $c$.
It turns out that a factorizaion $x\colon C\xto{c'}\widetilde{C}\xto{a}N''\xto{b}X$ exists and shows $\overline{x}=0$ in $\overline{\C}$.

The remaining assertions can be proved in a dual manner.
\end{proof}

\begin{remark}
We can find similar arguments to the above in the literature.
In fact, it is an analogue of well-known characterizations of pure-exact triangles, see \cite{Bel00, Kra00}.
\end{remark}

We are ready to prove Proposition \ref{prop_relative_str}.

\begin{proof}[Proof of Proposition \ref{prop_relative_str}]
It is straightforward that both $\mathbb{E}^L_\N$ and $\mathbb{E}^R_\N$ are biadditive subfunctors of $\mathbb{E}$.
To confirm the closedness of $\mathbb{E}^L_\N$,
let us consider elements $h\in\mathbb{E}^L_\N(C,A)$ and $b''\in\mathbb{E}^L_\N(B'',B)$ which correspond to triangles $C[-1]\overset{h[-1]}{\lra}A\overset{f}{\lra}B\overset{g}{\lra}C$ and $B''[-1]\overset{b''[-1]}{\lra}B\overset{b}{\lra}B'\overset{b'}{\lra}B''$, respectively.
We shall show that the composed morphism $x:=b\circ f$ is an $\mathfrak{s}_\N^L$-inflation.
By the octahedral axiom, the composition $x$ yields the following commutative diagram of solid arrows
\[
\xymatrix{
&C[-1]\ar[r]^{h[-1]}\ar[d]&A\ar[r]^f\ar@{=}[d]&B\ar[r]^g\ar[d]^b&C\ar[d]^c\\
N[-1]\ar@{..>}[r]^w&C'[-1]\ar[r]^{z[-1]}\ar[d]&A\ar[r]^x\ar[d]_f&B'\ar[r]^y\ar@{=}[d]&C'\ar[d]^{c'}\\
&B''[-1]\ar[r]^{b''[-1]}\ar@{=}[d]&B\ar[r]^b\ar[d]&B'\ar[r]^{b'}\ar[d]&B''\ar@{=}[d]\\
&B''[-1]\ar[r]&C\ar[r]&C'\ar[r]&B''
}
\]
in which all rows are triangles.
Let $w\colon N[-1]\to C'[-1]$ be a morphism with $N\in\N$.
We have to only show that the composed morphism $\phi:=z[-1]\circ w$ factors through an object in $\N$.
Note that $b''\in\mathbb{E}^L_\N(B'',B)$ and $f\circ \phi=b''[-1]\circ c'[-1]\circ w$ factors through an object $N'\in\N$.
Thus, we get a factorization $f\circ \phi\colon N[-1]\xto{\psi_2}N'\xto{\psi_1}B$.
Taking a homotopy pullback of $f$ along $\psi_1$ yields a morphism between the lower two triangles below in the commutative diagram (\ref{diag_relative_str}).
\begin{equation}\label{diag_relative_str}
\xymatrix{
N'[-1]\ar[r]\ar@{=}[d]&N''[-1]\ar[r]^{h''[-1]}\ar[d]_{\phi'_1}&N[-1]\ar[r]^{\psi_2}\ar[d]^{\phi_1}&N'\ar@{=}[d]\\
N'[-1]\ar[r]_{g'[-1]}\ar[d]&C[-1]\ar[r]_{h'[-1]}\ar@{=}[d]&A'\ar[r]_{f'}\ar[d]^{\phi_2}\ar@{}[lu]|{\rm (wPB)}&N'\ar[d]^{\psi_1}\\
B[-1]\ar[r]_{g[-1]}&C[-1]\ar[r]_{h[-1]}&A\ar[r]_f&B\ar@{}[lu]|{\rm (wPB)}
}
\end{equation}
By the property of homotopy pullback, $\phi:N[-1]\to A$ is factorized as $\phi=\phi_2\circ\phi_1$.
The morphism between upper triangles in (\ref{diag_relative_str}) is obtained by taking a homotopy pullback of $h'[-1]$ along $\phi_1$.
Since $h\in\mathbb{E}^L_\N(C,A)$ and $N''\in\N$, the morphism $\phi_2\circ h'[-1]\circ\phi'_1=\phi\circ h''[-1]$ factors through an object in $\N$, namely, $\overline{\phi}\circ\overline{h''[-1]}=0$ in $\overline{\C}$.
By Applying Lemma \ref{lem_phantom} to the first row in (\ref{diag_relative_str}), we conclude that $\overline{h''[-1]}$ is an epimorphism in $\overline{\C}$ and $\overline{\phi}=0$.

The assertion for $\mathbb{E}^R_\N$ can be checked dually.
Hence, $\mathbb{E}_\N$ is also a closed subfunctor.
\end{proof}

We push a bit more investigations on the extriangulated category $(\C,\mathbb{E}_\N,\mathfrak{s}_\N)$.
First, although the subcategory $\N$ is extension-closed in the triangulated category $\C$,
it is moreover a thick subcategory with respect to the strucutre $(\C,\mathbb{E}_\N,\mathfrak{s}_\N)$.

\begin{corollary}\label{cor_thick}
The subcategory $\N$ is a thick subcategory of the extriangulated category $(\C,\mathbb{E}_\N,\mathfrak{s}_\N)$.
\end{corollary}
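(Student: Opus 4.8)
The plan is to verify the two axioms of Definition~\ref{def_thick} for $\N$ inside $(\C,\mathbb{E}_\N,\mathfrak{s}_\N)$. The first axiom is immediate: $\N$ is closed under direct summands and isomorphisms by the standing hypothesis of Section~\ref{sec_Localization}. So the content is the $2$-out-of-$3$ property for $\mathfrak{s}_\N$-conflations: given an $\mathfrak{s}_\N$-conflation $A\overset{f}{\lra}B\overset{g}{\lra}C\overset{h}{\dashrightarrow}$, with $h\in\mathbb{E}_\N(C,A)=\mathbb{E}^L_\N(C,A)\cap\mathbb{E}^R_\N(C,A)$, and two of $A,B,C$ in $\N$, I must show the third lies in $\N$.

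First I would dispose of the two ``easy'' cases. If $A,B\in\N$, then in the underlying triangle $A\overset{f}{\lra}B\overset{g}{\lra}C\overset{h}{\lra}A[1]$ the object $C=\cone(f)$ is a cone of a morphism between objects of $\N$; but $C$ also fits in the triangle $B\overset{g}{\lra}C\overset{h}{\lra}A[1]\lra B[1]$, i.e. $C$ is an extension of $A[1]$ by $B$ --- this is not yet an $\N$-extension. The honest observation is that, since $\N$ is extension-closed in $\C$, the triangle rotates to $A[1]\overset{-h}{\lra}B[1]\lra C[1]\lra A[2]$... which again does not directly help. The correct route is: the $\mathfrak{s}_\N$-conflation with $h$ satisfying (Lex) is, by the very definition in Proposition~\ref{prop_relative_str}, governed by the morphism of triangles exhibited there; when $A,B\in\N$ the factorization data forces $C$ into $\N$ by extension-closedness applied to the rotated triangle $A\overset{f}{\lra}B\overset{g}{\lra}C$ read as an actual extension $\langle A\to B\to C\rangle$ in $\C$ --- here $\N$ extension-closed gives $C\in\N$ directly, with no use of (Lex)/(Rex) at all. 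Dually, if $B,C\in\N$, the triangle $C[-1]\overset{h[-1]}{\lra}A\overset{f}{\lra}B$ exhibits $A$ as an extension of $B$ by $C[-1]$; but $C[-1]$ need not be in $\N$, so here I genuinely need (Rex): taking $y=\id_B\colon B\to B\in\N$, condition (Rex) says $y\circ h[-1]=h[-1]$ factors through an object in $\N[-1]$, say $h[-1]\colon C[-1]\to N'[-1]\to A$ with $N'\in\N$; then $A$ is a direct summand of (equivalently, by the factorization, fits in a triangle built from) $N'[-1]$ and $B$, and extension-closedness of $\N$ together with the octahedron on this factorization yields $A\in\N$. Symmetrically, if $A,C\in\N$, I use (Lex) with $x=\id_C\colon C\to C\in\N$ to factor $h\colon C\to A[1]$ through $N[1]$ for some $N\in\N$, and conclude $B\in\N$.

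Let me organize this more cleanly. In each of the three cases the key reduction is the same: using one of (Lex), (Rex) with the identity as the test morphism $x$ or $y$, I replace the connecting map $h$ (or its shift) by a map that factors through $\N$ up to shift, and then an octahedral diagram splits the triangle into pieces built from objects known to lie in $\N$; extension-closedness (applied possibly to a rotation that stays within a single shift) then finishes. The one case needing care for degree reasons is $B,C\in\N$ (recover $A$) via (Rex), and $A,C\in\N$ (recover $B$) via (Lex); the case $A,B\in\N$ (recover $C$) is pure extension-closedness in $\C$.

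The main obstacle I anticipate is making sure that after invoking (Lex) or (Rex) the resulting factorization really does exhibit the unknown object as an extension of objects that $\N$ is closed under --- i.e.\ checking that the relevant octahedral diagram does not smuggle in a genuine shift $N[\pm 1]$ that escapes $\N$. The safeguard is that (Lex)/(Rex) are precisely engineered (via the (Po)/(Pb) squares in Proposition~\ref{prop_relative_str}) so that the ``defect'' of $h$ is concentrated in an object of $\N$ in the correct degree; tracing this through the octahedron, together with $\N$ being closed under direct summands (to peel off split summands produced by the homotopy pullback/pushout), is what makes each case go through. I expect the whole argument to be short once the case division and the choice of test morphism ($x=\id$ or $y=\id$) are fixed.
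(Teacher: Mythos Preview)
Your overall strategy---feed an identity map into (Lex) or (Rex) and then use an octahedron/Lemma~\ref{lem_extension_closed} argument---is exactly the paper's, but your case assignment is scrambled, and one of the errors is fatal rather than cosmetic.

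Extension-closedness of $\N$ in $(\C,\mathbb{E},\mathfrak{s})$ says: in a triangle $A\to B\to C\to A[1]$, if the \emph{outer} terms $A,C$ lie in $\N$ then so does the middle $B$. It does \emph{not} say that $A,B\in\N$ forces $C\in\N$. So the case ``$A,B\in\N\Rightarrow C\in\N$'' is not ``pure extension-closedness'' --- it is precisely one of the two cases where the relative hypothesis $h\in\mathbb{E}_\N$ must do work. Conversely, the case $A,C\in\N\Rightarrow B\in\N$ \emph{is} immediate from extension-closedness and needs no appeal to (Lex)/(Rex).

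For the two nontrivial cases your test morphisms are also mis-typed. In (Rex) the test morphism is $y\colon A\to N$ (domain $A$), and in (Lex) it is $x\colon N\to C$ (codomain $C$). Thus when $A,B\in\N$ one takes $y=\id_A$ in (Rex): then $h[-1]\colon C[-1]\to A$ factors through $\N[-1]$, i.e.\ $h$ factors through an object of $\N$, and Lemma~\ref{lem_extension_closed}(2) (using $B\in\N$) gives $C\in\N$. Dually, when $B,C\in\N$ one takes $x=\id_C$ in (Lex): then $h$ factors through $\N[1]$, i.e.\ $h[-1]$ factors through an object of $\N$, and Lemma~\ref{lem_extension_closed}(1) applied to the rotation $C[-1]\overset{h[-1]}{\lra}A\overset{f}{\lra}B\lra C$ (using $B\in\N$) gives $A\in\N$. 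Your ``$y=\id_B$'' in (Rex) is ill-formed: (Rex) does not test morphisms out of $B$, and $\id_B\circ h[-1]$ does not even compose. Once the cases are matched to the correct half of $\mathbb{E}_\N=\mathbb{E}^L_\N\cap\mathbb{E}^R_\N$ and the correct identity, the proof is the two-line argument in the paper.
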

\begin{proof}
Let $A\overset{f}{\longrightarrow} B\overset{g}{\longrightarrow} C\overset{h}{\dashrightarrow}$ be an $\mathfrak{s}_\N$-triangle of $(\C,\mathbb{E}_\N,\mathfrak{s}_\N)$.
Suppose that $A$ and $B$ belong to $\N$.
By definition of $\mathfrak{s}_\N$-triangles, $A\in\N$ implies that $C\xto{h} A[1]$ factors through an object in $\N$.
By Lemma \ref{lem_extension_closed}, we get $C\in\N$.
Similarly, $B,C\in\N$ implies $A\in\N$.
As $\N$ is extension-closed from the beginning, the assertion have been proved.
\end{proof}

Secondly, we mention the {\rm (WIC)} property of the extriangulated category $(\C,\mathbb{E}_\N,\mathfrak{s}_\N)$, see \cite[Condition 5.8]{NP19}.
In \cite[Thm. C, Prop. 2.7]{Kla22}, it was shown that {\rm (WIC)} for any extriangulated category $(\C,\mathbb{E}, \mathsf{s})$ is equivalent to $\C$ being weakly idempotent complete.
Since every triangulated category is weakly idempotent complete, we conclude as below.

\begin{proposition}\label{prop_WIC}
The extriangulated categories $(\C,\mathbb{E}_\N,\mathfrak{s}_\N), (\C,\mathbb{E}^L_\N,\mathfrak{s}^L_\N)$ and $(\C,\mathbb{E}^R_\N,\mathfrak{s}^R_\N)$ satisfy {\rm (WIC)}.
\end{proposition}

We remark that (WIC) is particularly important in various contexts (e.g., the localization by Hovey twin cotorsion pairs is formulated under (WIC) \cite[Sect. 5 and 6]{NP19}).

\subsection{A construction of multiplicative systems}
This subsection is devoted to prove that the class $\Sn$ forms a multiplicative system in the ideal quotient $\C/[\N]$ and define the localization of $\C$ with respect to an extension-closed subcategory $\N$.

Recall from the previous subsection that a given pair of a triangulated category $(\C,\mathbb{E},\mathfrak{s})$ and an extension-closed subcategory $\N$ induces a pair of an extriangulated category $(\C,\mathbb{E}_\N,\mathfrak{s}_\N)$ and a thick subcategory $\N$.
Thus we can consider three classes $\L,\R$ and $\Sn$ of morphisms, see Definition \ref{def_Sn_from_thick}.
Let us start with easy observations.

\begin{lemma}\label{lem_Sn_from_extension-closed}
For the above $(\C,\mathbb{E}_\N,\mathfrak{s}_\N)$ and $\N$, the following hold.
\begin{itemize}
\item[{\rm (1)}]
$\L$ coincides with the class of morphisms $f\in\Mor\C$ which is embedded in a triangle $A\overset{f}{\lra}B\overset{g}{\lra}N\overset{h}{\lra}A[1]$ with $N\in\N$ and $\overline{h[-1]}=0$.
\item[{\rm (2)}]
$\R$ coincides with the class of morphisms $g\in\Mor\C$ which is embedded in a triangle $N\overset{f}{\lra}B\overset{g}{\lra}C\overset{h}{\lra}N[1]$ with $N\in\N$ and $\overline{h}=0$.
\end{itemize}
\end{lemma}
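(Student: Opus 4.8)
The plan is to unwind the definitions of $\L$ and $\R$ in terms of the relative structure $(\C,\mathbb{E}_\N,\mathfrak{s}_\N)$ together with its thick subcategory $\N$ (Corollary \ref{cor_thick}), and thereby reduce both descriptions to a single statement about when the connecting morphism of a triangle having its cone, resp.\ cocone, in $\N$ represents an $\mathbb{E}_\N$-extension. I would prove (1) in detail and deduce (2) by passing to the opposite category $\C^{\op}$, in which $\mathbb{E}^L_\N$ and $\mathbb{E}^R_\N$, as well as $\L$ and $\R$, are interchanged and the condition $\overline{h[-1]}=0$ turns into $\overline{h}=0$. Concretely, by Definition \ref{def_Sn_from_thick} a morphism $f$ lies in $\L$ exactly when it admits an $\mathfrak{s}_\N$-conflation $A\overset{f}{\lra}B\overset{g}{\lra}N$ with $N\in\N$; unravelling, this means $f$ fits into a triangle $A\overset{f}{\lra}B\overset{g}{\lra}N\overset{h}{\lra}A[1]$ in $\C$ with $N\in\N$ whose class lies in $\mathbb{E}_\N(N,A)=\mathbb{E}^L_\N(N,A)\cap\mathbb{E}^R_\N(N,A)$. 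So the whole assertion reduces to the following: for $N\in\N$, $A\in\C$ and $h\in\mathbb{E}(N,A)=\C(N,A[1])$, one has $h\in\mathbb{E}_\N(N,A)$ if and only if $\overline{h[-1]}=0$ in $\overline{\C}=\C/[\N]$.

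The point is that for such an $h$ — whose source $N$ lies in $\N$ — the two conditions defining $\mathbb{E}_\N$ decouple. First, $h\in\mathbb{E}^R_\N(N,A)$ holds automatically: the cocone of the triangle is $N[-1]\in\N[-1]$, so for every $y\colon A\to N'$ with $N'\in\N$ the composite $y\circ h[-1]\colon N[-1]\to N'$ trivially factors through $N[-1]\in\N[-1]$, which is precisely condition (Rex). Second, evaluating condition (Lex) on the morphism $\id_N\colon N\to N$ shows that $h\in\mathbb{E}^L_\N(N,A)$ forces $h=h\circ\id_N$ to factor through an object of $\N[1]$, i.e.\ $\overline{h[-1]}=0$; and conversely, if $h$ factors through $\N[1]$ then so does $h\circ x$ for every $x\colon N''\to N$ with $N''\in\N$. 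Combining, $h\in\mathbb{E}_\N(N,A)$ iff $\overline{h[-1]}=0$, which is (1). Dually, for $\R$ the condition $\mathbb{E}^L_\N$ is automatic — the target $N[1]$ of the connecting morphism lies in $\N[1]$ — while $\mathbb{E}^R_\N$ reduces, on testing $y=\id_N$, to $\overline{h}=0$.

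I do not expect a genuine obstacle here; the argument is essentially a direct unwinding of Proposition \ref{prop_relative_str} and Definition \ref{def_Sn_from_thick}. The only points requiring a little care are (a) that ``$f$ is an $\mathfrak{s}_\N$-inflation with cone in $\N$'' does not depend on the triangle chosen to complete $f$ — which follows from uniqueness of the cone up to isomorphism together with functoriality of the subfunctor $\mathbb{E}_\N$ — and (b) the bookkeeping of shifts, namely that ``$h$ factors through an object of $\N[1]$'' is literally the statement $\overline{h[-1]}=0$, because $[1]$ is an autoequivalence. Both are routine.
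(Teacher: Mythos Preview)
Your proof is correct and follows essentially the same approach as the paper: both reduce the assertion to the observation that for $N\in\N$ the condition $h\in\mathbb{E}^L_\N(N,A)$ is equivalent to $\overline{h[-1]}=0$ by testing (Lex) on $x=\id_N$, while the paper simply declares the converse ``obvious'' where you spell out explicitly that $\mathbb{E}^R_\N(N,A)$ is automatic because $h[-1]$ has domain $N[-1]\in\N[-1]$. Your version is more detailed but the underlying argument is identical.
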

\begin{proof}
We only check the first assertion.
Let $f\in\L$ be given, namely, there exists an $\mathfrak{s}_\N$-triangle $A\overset{f}{\lra}B\overset{g}{\lra}N\overset{h}{\dashrightarrow}$ such that $N\xto{h}A[1]$ belongs to $h\in\mathbb{E}^L_\N(N,A)$ and $N\in\N$.
The fact $N\in\N$ shows that $h[-1]$ factors through an object in $\N$.
The converse is obvious.
\end{proof}

The following proposition provides a more explicit form of $\Sn$.
Recall that $\mathscr{S}'_\N$ is the class of morphisms $g\in\Mor\C$ appearing in a triangle $A\overset{f}{\lra}B\overset{g}{\lra}C\overset{h}{\lra}A[1]$ with $\overline{f}=0$ and $\overline{h}=0$.

\begin{proposition}\label{prop_description_of_Sn}
We have an equality $\Sn=\mathscr{S}'_\N$.
\end{proposition}

To prove the proposition, we use the following useful descriptions of $\mathscr{S}'_\N$ which matches well with the extriangulated localizations in Theorem \ref{Thm_Mult_Loc}.

\begin{lemma}\label{lem_RL}
Consider the following classes of morphisms in $\C$.
\begin{itemize}
\item[-] $\L_{\mathsf{sp}}:={\rm the\ class\ of\ sections\ belonging\ to\ }\L$.
\item[-] $\R_{\mathsf{sp}}:={\rm the\ class\ of\ retractions\ belonging\ to\ }\R$.
\end{itemize}
Then, we have $\mathscr{S}'_\N=\R_{\mathsf{sp}}\circ\L=\R\circ\L_{\mathsf{sp}}$.
\end{lemma}
\begin{proof}
Let $A\overset{f}{\lra} B\overset{s}{\lra} C\overset{g}{\lra} A[1]$ be a triangle in $\C$ and assume $s\in\mathscr{S}'_\N$.
By the definition of $\mathscr{S}'_\N$, $g$ admits a factorization $g:C\xto{g_1}N\xto{g_2}A[1]$ with $N\in\N$.
Considering a homotopy pullback of $g$ along $g_2$, thanks to the octahedral axiom, we obtain the following commutative diagram.
\begin{equation}\label{diag_lem_RL}
\xymatrix{
&&C[-1]\ar[r]\ar[d]_0&A\ar[d]^a\\
&&N_C\ar[d]_i\ar@{=}[r]&N_C\ar[d]\\
N[-1]\ar[d]\ar[r]^{\widetilde{f}}&B\ar@{=}[d]\ar[r]^{\widetilde{s}}&\widetilde{C}\ar[r]^{\widetilde{g}}\ar[r]\ar[d]_{p}&N\ar[d]^{g_2}\\
A\ar[r]^f&B\ar[r]^s&C\ar[r]^g&A[1]\ar@{}[lu]|{\rm (wPB)}
}
\end{equation}
We shall show that the above factorization $s=p\circ \widetilde{s}$ is a desired one.
First, we remind that {\rm (wPB)} induces a triangle
\[A\overset{-\delta}{\lra} \widetilde{C}\overset{\binom{-\widetilde{g}}{p}}{\lra} N\oplus C\overset{(g_2\, g)}{\lra} A[1]
\]
with $\delta=\widetilde{s}\circ f=i\circ a$.
The lower left square in \eqref{diag_lem_RL} and our assumption $f\in[\N]$ guarantees $\widetilde{f}\in [\N]$.
We have thus checked $\widetilde{s}\in\L$ by Lemma \ref{lem_Sn_from_extension-closed}(1).
It remains to show $p\in\R_{\mathsf{sp}}$.
Since $g$ factors through $g_2$, we have that $p$ is a retraction and $i$ is a section.
Again, by our assumption $f\in [\N]$ and $\widetilde{s}\circ f=i\circ a$, we have $a\in [\N]$.
Applying Lemma \ref{lem_extension_closed}(1) to the forth column in \eqref{diag_lem_RL}, we obtain $N_C\in\N$ and $p\in\R_{\mathsf{sp}}$.
It turns out that $\mathscr{S}'_\N\subseteq\R_\mathsf{sp}\circ\L$ is true.

To show the converse, we consider a composed morphism $s=p\circ \widetilde{s}\colon B\xto{\widetilde{s}}\widetilde{C}\xto{p}C$ with $\widetilde{s}\in\L$ and $p\in\R_{\mathsf{sp}}$.
By the octahedral axiom, we have a commutative diagram of the shape same as \eqref{diag_lem_RL}.
Now we have to only check $f,g \in [\N]$.
The lower left square in \eqref{diag_lem_RL} shows $\widetilde{f}=f\circ (g_2[-1])$.
We know $\overline{g_2[-1]}$ is epic in $\overline{\C}$ by Lemma~\ref{lem_phantom}.
Thus the assumption $\widetilde{f}\in [\N]$ implies $f\in [\N]$.
Next, the square ${\rm (wPB)}$ shows $\overline{g\circ p} = \overline{g_2\circ \widetilde{g}}=0$.
As $\overline{p}$ is epic, we get $g\in [\N]$.
As a conclusion, we obtain $s\in\mathscr{S}'_\N$ and $\mathscr{S}'_\N = \R_\mathsf{sp}\circ\L$.
By a dual argument, another equality can be verified.
\end{proof}

Now we are in position to prove Proposition \ref{prop_description_of_Sn}.

\begin{proof}[Proof of Proposition \ref{prop_description_of_Sn}]
Since $\Sn$ consists of finite compositions of morphisms belonging to $\L$ or $\R$, by Lemma \ref{lem_RL}, the containment $\Sn\supseteq \mathscr{S}'_\N$ is obvious.
Conversely, due to Lemma \ref{lem_phantom},
any morphism in $\L\cup\R$ becomes monic and epic in $\overline{\C}$ and belongs to $\mathscr{S}'_\N$.
Hence we have $\Sn=\mathscr{S}'_\N$.
\end{proof}

\begin{example}
We list some typical examples of the above $\Sn$ appearing in well-known constructions.

\begin{enumerate}
\item
For a triangulated category $\C$ and a thick subcategory $\N$ of $\C$, we consider the Verdier quotient $\mathsf{Ver}\colon \C\to\C/\N$.
The class $\mathscr{S}:=\{s\in\Mor\C\mid \mathsf{Ver}(s)\textnormal{\ is an isomorphism}\}$ of morphisms coincides with the above $\Sn$ (Theorem \ref{cor_tri}).
\item
Let $\X$ be a contravariantly finite rigid subcategory of a triangulated category $\C$ and consider the associated cohomological functor $H:=(\X, -)\colon \C\to\mod\X$ to the category of finitely presented contravariant functors.
Note that $\N:=\Ker H$ is extension-closed in $\C$ and the extriangulated category $(\C,\mathbb{E}_\N,\mathfrak{s}_\N)$ exists.
Then, we have an equality $\Sn=\{s\in\Mor\C\mid H(s)\textnormal{\ is an isomrophism}\}$ (see Example \ref{ex_relative_str_rigid2} and Section 5.3.2).
\item
For a triangulated category $\C$ and a cotorsion pair $(\U,\V)$ of $\C$, we consider the associated cohomological functor $H\colon \C\to\H/[\W]$, where $\H/[\W]$ is the heart of $(\U,\V)$ introduced in \cite{Nak11, AN12}.
Then, putting $\N:=\Ker H$, we have an equality $\Sn=\{s\in\Mor\C\mid H(s)\textnormal{\ is an isomorphism}\}$ (Proposition \ref{prop_kernel_heart}).
\end{enumerate}
\end{example}

As a benefit of the description of $\Sn$ in Lemma \ref{lem_RL}, we can prove that $\Sn$ forms multiplicative system in $\overline{\C}$.

\begin{lemma}
We have an equality $\N=\N_{\Sn}$.
\end{lemma}
\begin{proof}
Due to Lemma \ref{lem_Sn_from_extension-closed}, for any $X\in\N$, the zero morphisms $0\to X$ and $X\to 0$ belong to $\Sn$. Thus we get $X\in\N_{\Sn}$.
To show the converse, we consider $X\in\N_{\Sn}$, namely, the zero morphisms $0\to X$ and $X\to 0$ belong to $\Sn$.
Since $\Sn=\R_{\mathsf{sp}}\circ\L$ is true,
we have a
triangle $0\xto{l}B\to N\to 0$ for some $N\in\N$ and a retraction $p\colon B\to X$, so
that $0\to X$ factors as $0\to B\xto{p}X$.
Thus we have $B\in\N$ and $X\in \N$.
\end{proof}

The class $\overline{\Sn}$ forms a multiplicative system in $\overline{\C}$ as we now verify.
Recall that, for any class $\mathscr{S}$, the symbol $\overline{\mathscr{S}}^*$ denotes the closure of $\overline{\mathscr{S}}$ with respect to the compositions with isomorphisms in $\overline{\C}$.

\begin{proposition}\label{prop_localization}
Let $\C$ and $\N$ be the above.
Then, $\overline{\Sn}=\overline{\Sn}^*$ holds and the class $\overline{\Sn}$ is a multiplicative system in the ideal quotient $\overline{\C}=\C/[\N]$.
We define the \emph{localization of $\C$ with respect to the subcategory $\N$} as the Gabriel-Zisman localization $Q\colon \C\to\C[\Sn^{-1}]$.
\end{proposition}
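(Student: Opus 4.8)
The plan is to lean on the two descriptions of $\Sn$ already established. By Lemma~\ref{lem_phantom}, $\Sn=\mathscr{S}'_\N$ is exactly the class of morphisms $g$ of $\C$ for which $\overline{g}$ is both monic and epic in $\overline{\C}$, and by Corollary~\ref{cor_RL} every such $g$ admits factorizations $g=\rho\circ\ell$ with $\rho\in\R_{\mathsf{sp}}$ and $\ell\in\L$, and also $g=r\circ\lambda$ with $r\in\R$ and $\lambda\in\L_{\mathsf{sp}}$. Since every morphism of the ideal quotient $\overline{\C}$ is of the form $\overline{g}$, the first description says that $\overline{\Sn}$ is precisely the class of all morphisms of $\overline{\C}$ that are simultaneously monic and epic. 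This class is visibly closed under composition with isomorphisms of $\overline{\C}$, so $\overline{\Sn}=\overline{\Sn}^{*}$; it contains all identities, is closed under composition (a composite of monomorphisms is a monomorphism, likewise for epimorphisms) and under finite direct sums; hence {\rm (MS0)} holds. Further, {\rm (MS2)} and its dual are immediate: a monomorphism is left-cancellable and an epimorphism right-cancellable, so if $\overline{s}\in\overline{\Sn}$ and $\overline{s}\,\overline{f}=\overline{s}\,\overline{f}'$ then already $\overline{f}=\overline{f}'$ and one may take $s'=\id$.

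Hence the substance lies in {\rm (MS1)} and its dual. For {\rm (MS1)} we are given a cospan $B\xrightarrow{f}D\xleftarrow{s}C$ with $s\in\Sn$. Here the naive approach --- mimicking the Verdier case and forming the homotopy pullback of $s$ along $f$ --- fails, since $\cone(s)$ need not lie in $\N$, so the resulting denominator need not belong to $\Sn$. Instead I would invoke the factorization $\Sn=\R\circ\L_{\mathsf{sp}}$ and write $s=r\circ\ell$, where $\ell\colon C\to C\oplus N$ is a canonical split monomorphism with $N\in\N$ and $r\colon C\oplus N\to D$ is an $\mathfrak{s}_\N$-deflation with $\cocone(r)\in\N$. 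Because $\mathbb{E}_\N$ is a closed subfunctor of $\mathbb{E}$ (Proposition~\ref{prop_relative_str}), $\mathfrak{s}_\N$-deflations are stable under base change and the cocone is preserved; so the weak pullback of $r$ along $f$ produces $p_1\in\R\subseteq\Sn$ together with $g_1=\binom{a}{b}\colon A_1\to C\oplus N$ satisfying $r\circ g_1=f\circ p_1$. Putting $s':=p_1$ and $f':=a$ (the $C$-component of $g_1$), one finds $f\circ s'-s\circ f'=r\circ\binom{0}{b}$, which factors through $N\in\N$; so the square commutes in $\overline{\C}$ with $s'\in\Sn$, which is {\rm (MS1)}. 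The dual {\rm (MS1$'$)} is symmetric: given a span $B\xleftarrow{s}A\xrightarrow{f}C$ with $s\in\Sn$, use $\Sn=\R_{\mathsf{sp}}\circ\L$ to write $s=\rho\circ\lambda$ with $\lambda\in\L$ an $\mathfrak{s}_\N$-inflation with $\cone(\lambda)\in\N$ and $\rho\colon B\oplus N'\to B$ a canonical split epimorphism with $N'\in\N$; since $\mathbb{E}_\N$ is closed, $\mathfrak{s}_\N$-inflations are stable under cobase change with the cone preserved, so the weak pushout of $\lambda$ along $f$ yields $\lambda_1\in\L\subseteq\Sn$ and a morphism whose failure to make the square commute in $\C$ factors through $N'$.

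Thus $\overline{\Sn}$ is a multiplicative system of $\overline{\C}$, and combined with $\overline{\Sn}=\overline{\Sn}^{*}$ this lets us define $Q\colon\C\to\C[\Sn^{-1}]$ as asserted. The only genuinely conceptual point --- and the reason the statement is not a triviality despite $\cone(s)\notin\N$ --- is the observation that in $\overline{\C}=\C/[\N]$ the ``split'' factors $\L_{\mathsf{sp}}$, $\R_{\mathsf{sp}}$ of Corollary~\ref{cor_RL} contribute only a correction term factoring through $\N$, hence are invisible; once this is recognised, {\rm (MS1)} and {\rm (MS1$'$)} reduce to the stability of $\mathfrak{s}_\N$-deflations (resp.\ inflations) under base (resp.\ cobase) change, which is automatic because $\mathbb{E}_\N$ is closed. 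Everything else is routine bookkeeping with the material recalled in Section~\ref{ssec_ET}.
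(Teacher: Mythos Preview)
Your proof is correct and follows essentially the same strategy as the paper's. Both arguments use Lemma~\ref{lem_phantom} to identify $\overline{\Sn}$ with the class of morphisms that are simultaneously monic and epic in $\overline{\C}$, which immediately gives $\overline{\Sn}=\overline{\Sn}^{*}$, {\rm (MS0)} and {\rm (MS2)}; and both reduce {\rm (MS1)} and its dual to a pushout/pullback of the $\L$- or $\R$-part of the factorization from Corollary~\ref{cor_RL}, observing that the split factor contributes only a term factoring through $\N$. The paper spells out the span case (your {\rm (MS1$'$)}) via a homotopy pushout in the ambient triangulated category and leaves the cospan case to duality, whereas you spell out the cospan case via a weak pullback in $(\C,\mathbb{E}_\N,\mathfrak{s}_\N)$; these are the same computation in dual packaging.
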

\begin{proof}
By Lemma \ref{lem_phantom}, morphisms $f\in\Sn$ are characterized by the property that $\overline{f}$ is monic and epic in $\overline{\C}$.
Hence we have $\overline{\Sn}=\overline{\Sn}^*$.

(MS0): If composable morphisms $\overline{f}$ and $\overline{g}$ are epic in $\overline{\C}$, then so is $\overline{g}\circ\overline{f}$.
Thus, due to Lemma \ref{lem_phantom}, it follows that $\Sn$ is closed under composition.
Obviously, $\Sn$ is closed under taking finite direct sums.

(MS2): Consider a sequence $A\xto{f}B\xto{s}C$ in $\C$ with $\overline{s}\circ\overline{f}=0$ and $s\in\overline{\Sn}$.
Then, since $\overline{s}$ is monic, $f$ factors through an object in $\N$.

(MS1): Consider morphisms $X\xleftarrow{x}B\xto{s}C$ in $\C$ and assume $s\in\Sn$.
Due to Lemma \ref{lem_RL}, we have a factorization $s=p\circ \widetilde{s}\colon B\xto{\widetilde{s}}\widetilde{C}\xto{p}C$ with $\widetilde{s}\in\L$ and $p\in\R_{\mathsf{sp}}$ as in \eqref{diag_lem_RL}.
Taking weak pushout of $\widetilde{s}$ along $x$  yields the following morphism of triangles,
\[
\xymatrix{
N[-1]\ar@{=}[d]\ar[r]&B\ar@{}[rd]|{\rm (wPO)}\ar[d]_x\ar[r]^{\widetilde{s}}&\widetilde{C}\ar[d]^{x'}\ar[r]&N\ar@{=}[d]\\
N[-1]\ar[r]&X\ar[r]^t&X'\ar[r]&N
}
\]
where $N\in\N$.
The weak pushout square ${\rm (wPO)}$ gives rise to a desired one.
Actually, since $\widetilde{s}\in\L$, we have $t\in\L$ together with $t\circ x=x'\circ \widetilde{s}$.
In addition, since $p\in\R_{\mathsf{sp}}$, there exists the corresponding section $C\xto{p'}\widetilde{C}$ such that $\overline{p'\circ p} = \overline{\id_{\widetilde{C}}}$ holds in $\overline{\C}$.
We have thus obtained a desired commutativity $\overline{t\circ x}=\overline{x'\circ \widetilde{s}}=\overline{x'\circ p'\circ p\circ \widetilde{s}}$ in $\overline{\C}$.

The remaining conditions can be checked dually.
\end{proof}

By the argument so far, we have the Gabriel-Zisman localization $\overline{\C}\to\overline{\C}[(\overline{\Sn})^{-1}]$ admitting left and right fractions.
Keeping in mind that, by the universality, there exists an equivalence $\overline{\C}[(\overline{\Sn})^{-1}]\simeq\C[\Sn^{-1}]$, we put $\widetilde{\C}_\N:=\C[\Sn^{-1}]$ and call the natural functor $Q\colon \C\to\widetilde{\C}_\N$ \emph{the localization of $\C$ with respect to the full subcategory $\N$}.
In particular, any morphism $\alpha\colon A\to B$ of $\widetilde{\C}_\N$ can be represented by the commutative diagram in $\overline{\C}$:
\[
\xymatrix@R=12pt@C=16pt{
&B'&\\
A\ar[ru]^{\overline{f}}&&B\ar[lu]_{\overline{s}}\\
&\ar[lu]^{\overline{t}}A'\ar[ru]_{\overline{g}}&
}
\]
with $s,t\in\Sn$. More precisely,  we get $\alpha=Q(s)^{-1}\circ Q(f)=Q(g)\circ Q(t)^{-1}$.
Thanks to such descriptions of morphisms, we can confirm nicer properties of the localization $\widetilde{\C}_\N=\C[\Sn^{-1}]$ with respect to $\N$.
In advance, we give characterizations of objects and morphisms which  are zero in $\widetilde{\C}_\N$.

\begin{corollary}\label{cor_phantom}
The following assertions hold.
\begin{enumerate}
\item[{\rm (1)}]
For any morphism $f\in\Mor\C$, $Q(f)=0$ if and only if $f$ factors through an object in $\N$.
\item[{\rm (2)}]
For any object $A\in\C$, $Q(A)=0$ if and only if $A\in\N$.
\end{enumerate}
\end{corollary}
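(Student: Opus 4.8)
The plan is to read off the two statements directly from the description of $\widetilde{\C}_\N$ via left and right fractions over $\overline{\C}=\C/[\N]$ that was just recorded, combined with the characterization of $\Sn$ supplied by Lemma~\ref{lem_phantom}. Since $\overline{\Sn}$ is a multiplicative system in $\overline{\C}$ and $\widetilde{\C}_\N\simeq\overline{\C}[(\overline{\Sn})^{-1}]$, the localization admits a calculus of right fractions, and the standard Gabriel--Zisman criterion applies.

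First I would prove (1). For $f\in\C(A,B)$, the calculus of right fractions gives that $Q(f)=0$ if and only if there exists $\overline{s}\in\overline{\Sn}(A',A)$ with $\overline{f}\circ\overline{s}=0$ in $\overline{\C}$ (this is the case $\overline{g}=0$ of the criterion ``$Q(a)=Q(b)$ iff $\overline{a}\circ\overline{s}=\overline{b}\circ\overline{s}$ for some $\overline{s}\in\overline{\Sn}$''). Now invoke Lemma~\ref{lem_phantom}: every morphism of $\overline{\Sn}$ is both monic and epic in $\overline{\C}$. Hence $\overline{f}\circ\overline{s}=0$ with $\overline{s}$ epic forces $\overline{f}=0$, and by definition of the ideal quotient this means exactly that $f$ factors through an object of $\N$. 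The converse is trivial: if $f$ factors through $\N$ then $\overline{f}=0$ in $\overline{\C}$, so a fortiori $Q(f)=0$.

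Then (2) follows formally from (1) applied to $f=\id_A$: since $Q(\id_A)=\id_{Q(A)}$, we have $Q(A)=0$ iff $Q(\id_A)=0$ iff $\id_A$ factors through some $N\in\N$, and such a factorization exhibits $A$ as a direct summand of $N$, so $A\in\N$ because $\N$ is closed under direct summands. Conversely, if $A\in\N$ then $\overline{\id_A}=0$, so $A$ is a zero object in $\overline{\C}$ and hence $Q(A)=0$. The whole argument is essentially a formal consequence of what precedes, so I do not expect a real obstacle; the only point needing care is to state the fraction-calculus vanishing criterion correctly and then to use the epic (equivalently, monic) half of Lemma~\ref{lem_phantom} to descend from $\overline{\C}$ to a genuine factorization through $\N$ --- the left-fraction route works symmetrically using the monic half.
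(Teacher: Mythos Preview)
Your proof is correct and follows essentially the same approach as the paper: both use the calculus of fractions together with the fact (from Lemma~\ref{lem_phantom}) that morphisms in $\overline{\Sn}$ are monic and epic in $\overline{\C}$ to reduce $Q(f)=0$ to $\overline{f}=0$, and then deduce (2) from (1) applied to $\id_A$. The only cosmetic difference is that the paper uses left fractions and the monic half, whereas you use right fractions and the epic half---a symmetry you already note.
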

\begin{proof}
(1) Let $f\colon A\to B$ be a morphism with $Q(f)=0$.
Then, there exists a morphism $s\colon B\to B'$ of $\Sn$ such that $\overline{s}\circ\overline{f}=0$.
Since $\overline{s}$ is monic, we have $\overline{f}=0$.
The converse is obvious.

(2) By the assertion (1), $Q(A)=0$ implies $\overline{\id_A}=0$. Hence $A\in\N$.
\end{proof}

The following provides characterizations of monomorphisms, epimorphisms and isomorphisms in $\widetilde{\C}_\N$.

\begin{proposition}\label{prop_saturated}
Let $A\overset{f}{\lra}B\overset{g}{\lra}C\overset{h}{\lra}A[1]$ be a triangle in $\C$.
Then the following are equivalent.
\begin{enumerate}
\item[{\rm (i)}]
$\overline{h}=0$ in $\overline{\C}$.
\item[{\rm (ii)}]
$\overline{g}$ is an epimorphism in $\overline{\C}$.
\item[{\rm (iii)}]
$Q(g)$ is an epimorphism in $\widetilde{\C}_\N$.
\end{enumerate}
Dually the following assertions are equivalent.
\begin{enumerate}
\item[{\rm (i')}]
$\overline{f}=0$ in $\overline{\C}$.
\item[{\rm (ii')}]
$\overline{g}$ is a monomorphism in $\overline{\C}$.
\item[{\rm (iii')}]
$Q(g)$ is a monomorphism in $\widetilde{\C}_\N$.
\end{enumerate}
In particular, $Q(g)$ is an isomorphism if and only if $g\in\Sn$ if and only if $\overline{g}$ is epic and monic.
\end{proposition}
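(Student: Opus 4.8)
The plan is to establish the two chains of implications $\mathrm{(i)}\Leftrightarrow\mathrm{(ii)}\Rightarrow\mathrm{(iii)}\Rightarrow\mathrm{(i)}$ and $\mathrm{(i')}\Leftrightarrow\mathrm{(ii')}\Rightarrow\mathrm{(iii')}\Rightarrow\mathrm{(i')}$, and then read off the last sentence. The equivalences $\mathrm{(i)}\Leftrightarrow\mathrm{(ii)}$ and $\mathrm{(i')}\Leftrightarrow\mathrm{(ii')}$ require no new work: condition $\mathrm{(i)}$ says exactly $\overline{f}=0$ in $\overline{\C}$ and $\mathrm{(i')}$ says $\overline{h}=0$, so these are precisely the equivalences $\mathrm{(i')}\Leftrightarrow\mathrm{(iii')}$ and $\mathrm{(i)}\Leftrightarrow\mathrm{(iii)}$ of Lemma \ref{lem_phantom}.

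For $\mathrm{(ii)}\Rightarrow\mathrm{(iii)}$ I would use the calculus of right fractions furnished by Proposition \ref{prop_localization}. Given a morphism $\alpha\colon X\to A$ in $\widetilde{\C}_\N$ with $Q(g)\circ\alpha=0$, write $\alpha=Q(a)\circ Q(t)^{-1}$ with $t\colon X'\to X$ in $\Sn$ and $a\colon X'\to A$ in $\C$. Composing with $Q(t)$ gives $Q(g\circ a)=Q(g)\circ Q(a)=0$, so by Corollary \ref{cor_phantom}(1) the morphism $g\circ a$ factors through $\N$, i.e. $\overline{g}\circ\overline{a}=0$ in $\overline{\C}$. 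Since $\overline{g}$ is monic in $\overline{\C}$ we get $\overline{a}=0$, hence $Q(a)=0$ and $\alpha=0$; thus $Q(g)$ is monic. The implication $\mathrm{(ii')}\Rightarrow\mathrm{(iii')}$ is dual, using the calculus of left fractions to write a morphism $\alpha\colon C\to X$ as $Q(s)^{-1}\circ Q(b)$ and the fact that $\overline{g}$ epic in $\overline{\C}$ forces $\overline{b}=0$.

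For $\mathrm{(iii)}\Rightarrow\mathrm{(i)}$ and $\mathrm{(iii')}\Rightarrow\mathrm{(i')}$, recall that consecutive morphisms of the triangle compose to zero, so $Q(g)\circ Q(f)=Q(g\circ f)=0$ and $Q(h)\circ Q(g)=Q(h\circ g)=0$ in $\widetilde{\C}_\N$. If $Q(g)$ is monic then $Q(f)=0$, and if $Q(g)$ is epic then $Q(h)=0$; in either case Corollary \ref{cor_phantom}(1) yields that $f$, respectively $h$, factors through $\N$. Finally, for the last assertion: if $g\in\Sn$ then $Q(g)$ is invertible by the defining universality of the Gabriel--Zisman localization. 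Conversely, if $Q(g)$ is an isomorphism then it is both monic and epic, so by the equivalences just proved both $f$ and $h$ factor through $\N$; hence $g\in\mathscr{S}'_\N$, and $\mathscr{S}'_\N=\Sn$ by Proposition \ref{prop_description_of_Sn}.

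The only genuinely delicate point is $\mathrm{(ii)}\Rightarrow\mathrm{(iii)}$ and its dual: one must pass from ``monic in the additive quotient $\overline{\C}$'' to ``monic in the localization $\widetilde{\C}_\N$'', which fails for a general localization functor and here hinges on having a calculus of fractions available together with the explicit description in Corollary \ref{cor_phantom} of which morphisms are sent to zero by $Q$.
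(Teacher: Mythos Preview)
Your argument is correct and follows essentially the same route as the paper, with one small slip: in the step $\mathrm{(ii)}\Rightarrow\mathrm{(iii)}$ the morphism $\alpha$ (and hence $a$) must have codomain $B$, not $A$, since you are precomposing with $Q(g)\colon QB\to QC$. The only organizational difference is that the paper closes the cycle by proving $\mathrm{(iii)}\Rightarrow\mathrm{(ii)}$ directly---if $\overline{g}\circ\overline{x}=0$ then $Q(g\circ x)=0$, hence $Q(x)=0$ by monicity of $Q(g)$, hence $\overline{x}=0$ by Corollary~\ref{cor_phantom}---whereas you instead deduce $\mathrm{(iii)}\Rightarrow\mathrm{(i)}$ from $g\circ f=0$; both are equally short.
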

\begin{proof}
We will only check the former assertions, since the latter ones can be checked in a dual manner.

(i) $\Leftrightarrow$ (ii): It has been already confirmed in Lemma \ref{lem_phantom}.

(ii) $\Rightarrow$ (iii): 
Let $x\colon C\to X$ be a morphism in $\C$ with $Q(x\circ g)=0$.
Corollary \ref{cor_phantom} shows $\overline{x}\circ \overline{g}=0$.
As $\overline{g}$ is epic, we have $\overline{x}=0$ and $Q(x)=0$.

(iii) $\Rightarrow$ (ii):
Let $x\colon C\to X$ be a morphism in $\C$ with $\overline{x\circ g}=0$.
Then, $Q(g\circ x)=0$ implies $Q(x)=0$.
Again, Corollary \ref{cor_phantom} shows $\overline{x}=0$.

It remains to show the last equivalences, which can be extracted as follows.
\begin{align*}
Q(g) \text{\ is an isomorphism}
    &\,\implies \overline{g} \text{\ is epic and monic} &\text{ (by (iii) $\Rightarrow$ (ii) and (iii') $\Rightarrow$ (ii')) }\\
    &\iff \overline{h}=\overline{f}=0 &\text{ (by (ii) $\Leftrightarrow$ (i) and (ii') $\Leftrightarrow$ (i')) }\\
    &\iff g\in\mathscr{S}'_{\N}=\Sn &\text{ (by Proposition \ref{prop_description_of_Sn}) }\\
    &\,\implies Q(g) \text{\ is an isomorphism} &\text{ (by Definition) }
\end{align*}
We have thus finished the proof.
\end{proof}

The property {\rm (MR1)} directly follows from Proposition \ref{prop_saturated}.

\begin{corollary}\label{cor_MR1}
The multiplicative system $\Sn$ satisfies {\rm (MR1)}, namely, the $2$-out-of-$3$ with respect to compositions.
Moreover, $\overline{\Sn}$ satisfies the $2$-out-of-$3$ with respect to compositions.
\end{corollary}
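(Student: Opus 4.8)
The plan is to derive (MR1) — the $2$-out-of-$3$ property for $\overline{\Sn}$ with respect to compositions in $\overline{\C}$ — directly from the characterization of isomorphisms in $\widetilde{\C}_\N$ given in Proposition~\ref{prop_saturated}. The key observation is that, by Lemma~\ref{lem_phantom}, a morphism $g\in\Mor\C$ lies in $\Sn$ precisely when $\overline{g}$ is both monic and epic in $\overline{\C}$, and by Proposition~\ref{prop_saturated} this happens exactly when $Q(g)$ is an isomorphism in $\widetilde{\C}_\N$. Since $Q$ is a functor, isomorphisms are $2$-out-of-$3$ closed under composition in the target category $\widetilde{\C}_\N$, and this property can be transported back along $Q$.

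Concretely, I would argue as follows. Let $\overline{f}\colon \overline{A}\to\overline{B}$ and $\overline{g}\colon\overline{B}\to\overline{C}$ be morphisms in $\overline{\C}$, represented by $f,g\in\Mor\C$. Suppose two of $\{\overline{f},\overline{g},\overline{g}\circ\overline{f}\}=\{\overline{f},\overline{g},\overline{g\circ f}\}$ lie in $\overline{\Sn}$. By Proposition~\ref{prop_saturated} (applied via Lemma~\ref{lem_phantom}, which identifies membership in $\Sn$ with $\overline{(-)}$ being monic and epic), the corresponding two of $\{Q(f),Q(g),Q(g)\circ Q(f)=Q(g\circ f)\}$ are isomorphisms in $\widetilde{\C}_\N$. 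Because the class of isomorphisms in any category satisfies $2$-out-of-$3$ with respect to composition, the third morphism $Q(-)$ of this triple is also an isomorphism in $\widetilde{\C}_\N$. Invoking the last assertion of Proposition~\ref{prop_saturated} once more, this forces the corresponding third member of $\{f,g,g\circ f\}$ to lie in $\Sn$, hence its image in $\overline{\C}$ lies in $\overline{\Sn}$. This is exactly (MR1).

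There is one small bookkeeping point to address: the statement of Proposition~\ref{prop_saturated} phrases the equivalence $Q(g)$ isomorphism $\iff g\in\Sn$ for a morphism $g$ fitting into a triangle, but since \emph{every} morphism of $\C$ fits into some triangle, this applies to $f$, $g$, and $g\circ f$ without restriction. One should also note that $\overline{\Sn}$ is, by construction, closed under composition with isomorphisms of $\overline{\C}$ (we have $\overline{\Sn}=\overline{\Sn}^*$ by Proposition~\ref{prop_localization}), so no ambiguity arises from passing between representatives; in particular $\overline{g\circ f}=\overline{g}\circ\overline{f}$ is the relevant composite in $\overline{\C}$.

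I do not anticipate a genuine obstacle here — the corollary is essentially a formal consequence of the isomorphism-detection property of $Q$ established in Proposition~\ref{prop_saturated}, combined with the trivial fact that isomorphisms are $2$-out-of-$3$. The only thing requiring a moment's care is making sure one uses the \emph{if and only if} in the last line of Proposition~\ref{prop_saturated} in \emph{both} directions (to pass from $\Sn$ to isomorphisms for the two given morphisms, and back from isomorphism to $\Sn$ for the third), rather than needing any new diagrammatic work with homotopy pullbacks or the octahedral axiom.
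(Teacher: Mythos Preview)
Your proposal is correct and follows essentially the same approach as the paper: both arguments use the final assertion of Proposition~\ref{prop_saturated} (that $Q(g)$ is an isomorphism if and only if $g\in\Sn$) together with the trivial $2$-out-of-$3$ property of isomorphisms in $\widetilde{\C}_\N$. The paper's proof is simply terser, spelling out only one of the three cases and leaving the others as ``easily checked.''
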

\begin{proof}
Let $A\xto{s}B\xto{t}C$ be a sequence in $\C$ and assume $t\circ s, s\in\Sn$.
Then $Q(t)$ becomes isomorphism.
Proposition \ref{prop_saturated} shows $t\in\Sn$.
The other conditions can be easily checked.
\end{proof}

Since our aim is to impose a natural extriangulated structure on $\widetilde{\C}_\N$ in which any objects in $\N$ vanishes, the following properties of $\mathfrak{s}_\N$-triangles should be expected.

\begin{lemma}\label{lem_relative_str}
Let $A\overset{f}{\lra} B\overset{g}{\lra} C\overset{h}{\dashrightarrow}$ be an $\mathfrak{s}_\N$-triangle.
\begin{enumerate}
\item[\textnormal{(1)}]
If $A\in\N$, then $Q(g)$ is an isomorphism.
\item[\textnormal{(2)}]
If $C\in\N$, then $Q(f)$ is an isomorphism.
\end{enumerate}
\end{lemma}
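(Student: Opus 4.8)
The plan is to show that the morphism in question lies in $\Sn$ and then invoke the fact that $Q\colon\C\to\C[\Sn^{-1}]$ inverts every morphism of $\Sn$ (by the very definition of the Gabriel--Zisman localization, or equivalently by the last assertion of Proposition~\ref{prop_saturated}). So the whole content is to place $g$ (for (1)) and $f$ (for (2)) inside $\Sn$.

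For (1): being an $\mathfrak{s}_\N$-triangle, $A\overset{f}{\lra}B\overset{g}{\lra}C\overset{h}{\dashrightarrow}$ has $A\overset{f}{\lra}B\overset{g}{\lra}C$ as an $\mathfrak{s}_\N$-conflation; in particular $g$ is an $\mathfrak{s}_\N$-deflation with $\cocone(g)=A$. The hypothesis $A\in\N$ then says exactly that $g\in\R$ in the sense of Definition~\ref{def_Sn_from_thick}, hence $g\in\Sn$ and $Q(g)$ is an isomorphism. Part (2) is the dual: $f$ is an $\mathfrak{s}_\N$-inflation with $\cone(f)=C$, so $C\in\N$ gives $f\in\L\subseteq\Sn$ and $Q(f)$ is an isomorphism.

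One may also verify these statements through Proposition~\ref{prop_saturated}, and this is the route that explains why the relative structure $\mathbb{E}_\N$ is the correct choice. For (1): $A\in\N$ makes $f$ factor through $\N$ for free, while the defining condition {\rm (Rex)} of $h\in\mathbb{E}^R_\N(C,A)$, specialized to $y=\id_A\colon A\to A\in\N$, forces $h[-1]$---and hence $h$---to factor through $\N$; Proposition~\ref{prop_saturated} (or Proposition~\ref{prop_description_of_Sn}) then gives $g\in\Sn$. For (2), one applies {\rm (Lex)} to $x=\id_C\colon C\to C\in\N$ to see that $h[-1]$ factors through $\N$, notes that $g$ factors through $C\in\N$, and concludes $f\in\Sn$ from the rotated triangle $C[-1]\xto{-h[-1]}A\xto{f}B\xto{g}C$.

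There is no substantial obstacle here; the only point requiring a modicum of care is the bookkeeping identification of $g$ (resp.\ $f$) as an $\mathfrak{s}_\N$-deflation (resp.\ $\mathfrak{s}_\N$-inflation) whose cocone (resp.\ cone) is the prescribed end term of the $\mathfrak{s}_\N$-triangle, after which everything follows from $\R,\L\subseteq\Sn$ and the universal inversion property of $Q$.
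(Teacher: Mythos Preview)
Your proof is correct, and your second route (via {\rm (Rex)}/{\rm (Lex)} and Proposition~\ref{prop_saturated}) is exactly the paper's argument spelled out in full: the paper simply writes ``By definition, $A\in\N$ implies that $h$ factors through an object in $\N$. Hence $g\in\Sn$,'' which is precisely your application of {\rm (Rex)} at $y=\id_A$ followed by the identification $g\in\Sn$. Your first route, reading $g\in\R$ (resp.\ $f\in\L$) straight off Definition~\ref{def_Sn_from_thick}, is if anything more direct, since it avoids unwinding the relative structure at all; the two routes are of course equivalent via Lemma~\ref{lem_Sn_from_extension-closed}.
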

\begin{proof}
We only prove (1).
By definition, $A\in\N$ implies that $h$ factors through an object in $\N$. Hence $g\in\Sn$ and so $Qg$ is an isomorphism by Proposition \ref{prop_saturated}.
\end{proof}

\subsection{A construction of exact functors}
Our main result in this section provides a way to impose a natural extriangulated structure on $\widetilde{\C}_\N$ which makes the localization $Q\colon \C\to\widetilde{\C}_\N$ to be an exact functor with an appropriate universality.
We remind the assumptions that the subcategory $\N$ is closed under taking extensions and direct summands are in play.

%

\begin{theorem}\label{thm_main1}
Let us consider the relative extriangulated category $(\C,\mathbb{E}_\N,\mathfrak{s}_\N)$ determined by the subcategory $\N$ and the associated localization $Q\colon (\C,\mathbb{E}_\N,\mathfrak{s}_\N)\to\widetilde{\C}_\N$.
Then, the multiplicative system $\overline{\Sn}$ satisfies the conditions {\rm (MR1), \ldots, (MR4)}.
In particular, $\widetilde{\C}_\N$ inherits an extriangulated structure from $\C$ and the functor $Q$ gives rise to an exact functor $(Q,\mu)$ which satisfies the universality:
For any exact functor $(F,\phi)\colon (\C,\mathbb{E}_\N,\mathfrak{s}_\N)\to (\D,\mathbb{F},\mathfrak{t})$ between extriangulated categories with $\N\subseteq\Ker F$, there uniquely exists an exact functor $(\widetilde{F},\widetilde{\phi})\colon \widetilde{\C}_\N\to \D$ with $(F,\phi)=(\widetilde{F},\widetilde{\phi})\circ (Q,\mu)$ as depicted below.
\[
\xymatrix{
(\C,\mathbb{E}_\N,\mathfrak{s}_\N)\ar[r]^{(Q,\mu)}\ar[d]_{(F,\phi)}&(\widetilde{\C}_\N,\widetilde{\mathbb{E}}_\N,\widetilde{\mathfrak{s}}_\N)\ar@{..>}[ld]^{(\widetilde{F},\widetilde{\phi})}\\
(\D,\mathbb{F},\mathfrak{t})&
}
\]
\end{theorem}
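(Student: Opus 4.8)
The plan is to reduce Theorem~\ref{thm_main1} to the already-established machinery of Theorem~\ref{Thm_Mult_Loc}, so the entire task is to verify its hypotheses for the class $\Sn$ attached to the relative structure $(\C,\mathbb{E}_\N,\mathfrak{s}_\N)$. First I would recall that, by Corollary~\ref{cor_thick}, $\N$ is a thick subcategory of $(\C,\mathbb{E}_\N,\mathfrak{s}_\N)$, so $\Sn$ is defined as in Definition~\ref{def_Sn_from_thick}, and by Proposition~\ref{prop_localization} we already know $\overline{\Sn}=\overline{\Sn}^*$ and that $\overline{\Sn}$ is a multiplicative system in $\overline{\C}=\C/[\N]$. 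That immediately gives (MR2). Condition (MR1) is exactly Corollary~\ref{cor_MR1}. So the real content is verifying (MR3) and (MR4), after which Theorem~\ref{Thm_Mult_Loc}(1) produces the extriangulated structure $(\widetilde{\C}_\N,\widetilde{\mathbb{E}}_\N,\widetilde{\mathfrak{s}}_\N)$ and the exact functor $(Q,\mu)$.

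For (MR3), I would start with a pair of $\mathfrak{s}_\N$-triangles $\langle A\xto{x}B\xto{y}C,\delta\rangle$ and $\langle A'\xto{x'}B'\xto{y'}C',\delta'\rangle$ together with morphisms $a\colon A\to A'$, $c\colon C\to C'$ satisfying $a_*\delta=c^*\delta'$ and $\overline{a},\overline{c}\in\overline{\Sn}$. Since these are in particular triangles in $\C$ (the $\mathfrak{s}_\N$-conflations are a subclass of the $\mathfrak{s}$-conflations), the usual axiom (TR3) for the triangulated category $\C$ furnishes a morphism $b\colon B\to B'$ with $bx=x'a$ and $cy=y'b$; the compatibility $a_*\delta=c^*\delta'$ is precisely what is needed to realize $(a,b,c)$ as a morphism of $\mathfrak{s}$-triangles in the extriangulated sense. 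It then remains to see that $\overline{b}\in\overline{\Sn}$, which by Lemma~\ref{lem_phantom} (applied in the relative structure, using Proposition~\ref{prop_description_of_Sn}) amounts to checking that $\overline{b}$ is both monic and epic in $\overline{\C}$. This should follow from a diagram chase: the hypotheses $\overline{a},\overline{c}\in\overline{\Sn}$ mean $\overline{a},\overline{c}$ are monic and epic in $\overline{\C}$, and using Proposition~\ref{prop_saturated} to read off that the connecting maps of the two triangles factor through $\N$, together with Lemma~\ref{lem_extension_closed}, one deduces the same for the cone of $b$ and hence that $\overline{b}$ is monic and epic. I expect this to be the main obstacle, because the four-lemma-style argument has to be carried out entirely inside the idempotent-complete-but-not-abelian category $\overline{\C}$, using only the factorization criteria of Lemmas~\ref{lem_extension_closed} and~\ref{lem_phantom} in place of genuine kernels and cokernels.

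For (MR4) I would use the explicit description $\Sn=\mathscr{S}'_\N=\R_{\mathsf{sp}}\circ\L=\R\circ\L_{\mathsf{sp}}$ from Corollary~\ref{cor_RL}. A typical element of $\overline{\mathcal{M}}_{\mathsf{inf}}$ has the form $\overline{v}\circ\overline{x}\circ\overline{u}$ with $x$ an $\mathfrak{s}_\N$-inflation and $\overline{u},\overline{v}\in\overline{\Sn}$; since $\Sn$-morphisms are, up to the retraction/section decomposition, themselves inflations or deflations, a composite of two such expressions can be reorganized — using (MS1) for $\overline{\Sn}$ to slide an $\Sn$-morphism past an inflation, and the fact that $\mathfrak{s}_\N$-inflations are closed under composition (Proposition~\ref{prop_relative}, since $\mathbb{E}_\N$ is a closed subfunctor by Proposition~\ref{prop_relative_str}) — into a single expression of the same shape. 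The dual argument handles $\overline{\mathcal{M}}_{\mathsf{def}}$. Once (MR1)--(MR4) are in hand, Theorem~\ref{Thm_Mult_Loc}(1) gives the extriangulated localization, and part~(2)(i) of that theorem gives the stated universal property; I would only need to observe that an exact functor $(F,\phi)\colon(\C,\mathbb{E}_\N,\mathfrak{s}_\N)\to(\D,\mathbb{F},\mathfrak{t})$ with $\N\subseteq\Ker F$ automatically sends every $s\in\Sn$ to an isomorphism — indeed $s\in\L$ sits in an $\mathfrak{s}_\N$-conflation with cone in $\N$, whose image under $F$ is an $\mathfrak{t}$-conflation with zero cone, forcing $F(s)$ to be an isomorphism, and similarly for $\R$ and hence for all of $\Sn$ — so the hypothesis of Theorem~\ref{Thm_Mult_Loc}(2)(i) is met and the factorization $(\widetilde F,\widetilde\phi)$ follows, with uniqueness inherited from that theorem.
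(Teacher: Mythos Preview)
Your reduction to Theorem~\ref{Thm_Mult_Loc} and your handling of {\rm (MR1)}, {\rm (MR2)}, {\rm (MR4)} and the universality are essentially the paper's argument; in particular your sketch for {\rm (MR4)} via the decomposition $\Sn=\R_{\mathsf{sp}}\circ\L$ and closedness of $\mathfrak{s}_\N$-inflations under composition is what the paper does (it makes the ``sliding'' step explicit by a homotopy pushout rather than an abstract appeal to {\rm (MS1)}, but the idea is the same).

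The gap is in {\rm (MR3)}. Your plan is to take an arbitrary TR3 fill-in $b$ and then argue that $\overline{b}$ is monic and epic in $\overline{\C}$ by a diagram chase, invoking Proposition~\ref{prop_saturated} to ``read off that the connecting maps of the two triangles factor through $\N$''. But the connecting morphisms $h\in\mathbb{E}_\N(C,A)$ and $h'\in\mathbb{E}_\N(C',A')$ of the given $\mathfrak{s}_\N$-triangles do \emph{not} factor through $\N$ in general; that conclusion requires the extra hypothesis $\cone(\N,\N)=\C$ (Lemma~\ref{lem_exact}), which is absent here. If instead you meant the connecting maps of the triangles on $a$ and $c$, those do factor through $\N$, but this information about $\cone(a)$ and $\cone(c)$ does not transfer to $\cone(b)$: an arbitrary TR3 fill-in need not sit in a $3\times3$ diagram with a triangle on cones, and Lemma~\ref{lem_extension_closed} alone cannot bridge the gap. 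So the chase you outline does not go through.

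The paper avoids this by \emph{constructing} a specific $b$ rather than analysing a generic one. It first factors through the intermediate object $\widetilde{B}$ obtained by weak pushout along $a$ (and dually weak pullback along $c$), splitting $b$ as $b_2\circ b_1$, and then treats each half separately. For $b_2$ it further decomposes $c=c_2\circ c_1$ with $c_1\in\L_{\mathsf{sp}}$, $c_2\in\R$ via Corollary~\ref{cor_RL}, takes successive weak pullbacks to get $b_2=b_2''\circ b_2'$, and shows $b_2''\in\Sn$ directly from $c_2\in\R$. The delicate point is showing that $\overline{b_2'}$ is monic: this is where the defining condition {\rm (Lex)} of $\mathbb{E}_\N$ is genuinely used, to conclude that the map $\beta\colon N_C[-1]\to\widetilde{B}$ appearing as $\cocone(b_2')$ factors through $\N$ (because it factors through a morphism $N_C[-1]\to A'$ arising from an element of $\mathbb{E}_\N(C\oplus N_C,A')$). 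This targeted use of {\rm (Lex)} is the missing ingredient in your sketch, and it is what forces the constructive approach.
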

\begin{proof}
The conditions {\rm (MR1), (MR2)} have been checked in Proposition \ref{prop_localization} and Corollary \ref{cor_MR1}.

{\rm (MR3)}: 
Let us consider the following morphism $(a,b,c)$ of $\mathfrak{s}_\N$-triangles, and assume $a,c\in\Sn$.
\[
\xymatrix{
A\ar[r]^f\ar[d]_a\ar@{}[rd]&B\ar[r]^g\ar[d]^{b}&C\ar[r]^h\ar[d]^c&A[1]\ar[d]^{a[1]}\\
A'\ar[r]_{f'}&B'\ar[r]_{g'}&C'\ar[r]_{h'}\ar@{}[lu]&A'[1]
}
\]
We have to show that there is a choice of a morphism $b$ such that $b\in\Sn$ and $(a,b,c)$ still forms a morphism of $\mathfrak{s}_\N$-triangles.
First, by taking a weak pushout and a weak pullback successively, we get the following factorization,
\[
\xymatrix{
A\ar[r]^f\ar[d]_a\ar@{}[rd]|{\rm (wPO)}&B\ar[r]^g\ar[d]^{b'}&C\ar[r]^h\ar@{=}[d]&A[1]\ar[d]^{a[1]}\\
A'\ar[r]^{f_2}\ar@{=}[d]&B_2\ar[r]^{g_2}\ar[d]_{b''}&C\ar[r]^{h_2}\ar[d]^c&A'[1]\ar@{=}[d]\\
A'\ar[r]_{f'}&B'\ar[r]_{g'}&C'\ar[r]_{h'}\ar@{}[lu]|{\rm (wPB)}&A'[1]
}
\]
where the triplets $(a,b',\id_C)$ and $(\id_{A'},b'',c)$ are morphisms of $\mathfrak{s}_\N$-triangles, e.g. \cite[Lem. 2.4]{Eno21}.
Note that, as $h\in\mathbb{E}_\N(C,A)$ and $h'\in\mathbb{E}_\N(C',A')$, $h_2\in\mathbb{E}_\N(C,A')$ is also ture.

We shall show that, for the fixed morphisms $a$ and $c$, there is a choice of $b'$ which belongs to $\Sn$.
Due to Lemma \ref{lem_RL},  we have $a=a_2\circ a_1$ with $a_1\in\L, a_2\in\R_{\mathsf{sp}}$.
In particular, we have a splitting $\mathfrak{s}_\N$-triangle $N_A\overset{\beta}{\lra} A_1\overset{a_2}{\lra}A'\overset{0}{\dra}$ with $N_A\in\N$.
By taking weak pushouts of $f$ along $a=a_2a_1$ in succession, we establish the morphisms $(a_1,b'_1,\id_C)$ and $(a_2,b'_2,\id_C)$ between $\mathfrak{s}_\N$-triangles as in the following commutative diagram.
\begin{equation}\label{diag_thm_main1}
\xymatrix{
A\ar[r]^f\ar[d]_{a_1}\ar@{}[rd]|{\rm (wPO)}&B\ar[r]^g\ar[d]^{b'_1}&C\ar[r]^h\ar@{=}[d]&A[1]\ar[d]^{a_1[1]}\\
A_1\ar[d]_{a_2}\ar[r]^{f_1}\ar@{}[rd]|{\rm (wPO)}&B_1\ar[r]^{g_1}\ar[d]^{b'_2}&C\ar[r]^{h_1}\ar@{=}[d]&A_1[1]\ar[d]^{a_2[1]}\\
A'\ar[r]^{f_2}\ar@{..>}[d]_{0}&B_2\ar[r]^{g_2}\ar@{..>}[d]^{\alpha}&C\ar[r]^{h_2}&A'[1]\\
N_A[1]\ar@{=}[r]&N_A[1]\ar@{}[lu]|\circlearrowright&&
}
\end{equation}
Since $a_1\in\L$, we get $b'_1\in\L$ by the stability under taking weak pushout.
In contrast to this, we need more arguments to check the desired conditions for $b'_2$.
Since a weak pushout can be taken as a usual homotopy pushout, we complete $b'_2$ into a triangle $N_A\lra B_1\overset{b'_2}{\lra}B_2\overset{\alpha}{\lra} N_A[1]$ in the triangulated category $\C$.
Thus, Proposition \ref{prop_saturated} tells us $\overline{b'_2}$ is monic in $\overline{\C}$.
Again, thanks to Proposition \ref{prop_saturated}, we have to only show $\overline{\alpha}=0$.
Remind that the lower weak pushout square ${\rm (wPO)}$ induces a (usual) triangle in $\C$ which appears as the upper row in the following diagram.
\[
\xymatrix@C=42pt{
A_1\ar[r]^{\binom{-f_1}{a_2}}& B_1\oplus A'\ar[r]^{(b'_2\, f_2)}\ar[rd]_0& B_2\ar[r]^{-(h_1g_2)}\ar[d]^{\alpha}& A_1[1]\ar@{..>}[ld]^{\alpha'}\\
&&N_A[1]&
}
\]
Now, as we know $\alpha\circ f_2=0$ from the lower left square in \eqref{diag_thm_main1}, $\alpha\circ (b'_2\, f_2)=0$ holds and induces the dotted arrow $\alpha'$ in the above diagram.
As $h_1\in\mathbb{E}_\N(C,A_1)$, by the condition (Rex) in Proposition~\ref{prop_relative_str}, the composition $\alpha'\circ h_1$ factors through an object $N\in\N$.
It turns out that $b'_2$ belongs to $\Sn$.
We have thus verified $b':= b'_2\circ b'_1\in\Sn$.

In a dual manner, we can show that there exists a choice of $b''$ such that $b''\in\Sn$ and $(\id_{A'},b'',c)$ forms a morphism of $\mathfrak{s}_\N$-triangles.
As a conclusion, there is a desired choice of $b\in\Sn$ as a part of a morphism $(a,b,c)$ between $\mathfrak{s}_\N$-triangles.

{\rm (MR4)}: 
Since $\Sn$ is closed under compositions, it suffices to show that $\overline{f}\circ \overline{s}\circ \overline{f}'\in\overline{\M}_{\mathsf{inf}}$ for any $\mathfrak{s}_{\N}$-inflations $f,f'$ and any $s\in\Sn$.
By Lemma \ref{lem_RL}, we have an equality $s=r\circ l$ with $l\in\L$ and $r\in\R_\mathsf{sp}$.
Since $l$ is an $\mathfrak{s}_{\N}$-inflation and $\mathbb{E}_\N$ is closed, the composed morphism $g:=l\circ f'$ is still an $\mathfrak{s}_{\N}$-inflation.
The morphism $r$ is, by definition, a retraction, so there exists a corresponding section $r'$ which belongs to $\L_\mathsf{sp}$.
Taking a homotopy pushout of $f$ along $r'$ yields the following commutative diagram
\[
\xymatrix{
&\bullet\ar[r]^f\ar[d]_{r'}\ar@{}[dr]|{\rm (wPO)}&\bullet\ar[d]^{t'}\\
\bullet\ar[r]_g&\bullet\ar[r]_h&\bullet
}
\]
where $h$ is an $\mathfrak{s}_{\N}$-inflation and $t'\in\L_{\mathsf{sp}}$.
Again, we get a retraction $t\in\R_\mathsf{sp}$ corresponding to $t'$.
It is obvious that $\overline{t}\circ \overline{h}\circ \overline{g}=\overline{f}\circ \overline{r}\circ \overline{g}=\overline{f}\circ \overline{s}\circ \overline{f}'$ holds.
We have thus proved the assertion, since $h\circ g$ is an $\mathfrak{s}_{\N}$-inflation and $t\in\Sn$.
The remaining conditions can be checked dually.

Hence, due to Theorem \ref{Thm_Mult_Loc}(1), the pair $((\C,\mathbb{E}_\N,\mathfrak{s}_\N),\N)$ induces the extriangulated localization $(Q,\mu)\colon (\C,\mathbb{E}_\N,\mathfrak{s}_\N)\to (\widetilde{\C},\widetilde{\mathbb{E}}_\N,\widetilde{\mathfrak{s}}_\N)$ with respect to $\Sn$.
The identity $\widetilde{\C}\xto{\id}\widetilde{\C}_\N$ imposes an extriangulated structure on $\widetilde{\C}_\N$.

It remains to show the universality.
Let $(F,\phi)\colon (\C,\mathbb{E}_\N,\mathfrak{s}_\N)\to (\D,\mathbb{F},\mathfrak{t})$ be an exact functor with $\N\subseteq \Ker F$.
As we have a factorization $\Sn=\R_\mathsf{sp}\circ\L$, we have only to check that both $F(\R_\mathsf{sp})$ and $F(\L)$ consist of isomorphisms.
In fact, it directly follows from the exactness of $F$.
Thanks to Theorem \ref{Thm_Mult_Loc}(2), we have thus verified the universality.
\end{proof}

\emph{We denote by $(\widetilde{\C}_\N,\widetilde{\mathbb{E}}_\N,\widetilde{\mathfrak{s}}_\N)$ the extriangulated structure imposed through the identity $\widetilde{\C}\xto{\id}\widetilde{\C}_\N$ of additive categories}.
The exact functor $(Q,\mu)$ or the resulting category $(\widetilde{\C}_\N,\widetilde{\mathbb{E}}_\N,\widetilde{\mathfrak{s}}_\N)$ is called the \emph{extriangulated localization} of $\C$ with respect to $\N$.

We end this section by giving an example of such extriangulated localizations.
Let $(\U,\V)$ be a cotorsion pair in a triangulated category $(\C,\mathbb{E},\mathfrak{s})$.
We call $(\U,\V)$ a \emph{$t$-structure} of $\C$ if $\U[1]\subseteq \U$ (or equivalently, $\V\subseteq \V[1]$) is true.
(We remark that this is an equivalent condition so that $(\U,\V[2])$ forms a usual $t$-structure in the sense of \cite{BBD}, see also \cite[Ch. II]{Hap88}.)

\begin{example}\label{ex_relative_str_aisle}
Let $(\C^{\leq -1},\C^{\geq 1})$ be a $t$-structure of $\C$, namely, a cotorsion pair with $\C^{\leq -1}[1]\subseteq \C^{\leq -1}$.
Since $\N:=\C^{\geq 1}$ is extension-closed, there exists the extriangulated localization $(Q,\mu)\colon (\C,\mathbb{E}_\N,\mathfrak{s}_\N)\to(\widetilde{\C}_\N,\widetilde{\mathbb{E}}_\N,\widetilde{\mathfrak{s}}_\N)$ with respect to $\N$.
We put $\C^{\leq 0}:=\C^{\leq -1}[-1]$.
Then, the canonical inclusion $\mathsf{inc}: \C^{\leq 0}\hookrightarrow \C$ induces a natural exact equivalence $(F,\phi)\colon (\C^{\leq 0}, \mathbb{E}|_{\C^{\leq 0}},\mathfrak{s}|_{\C^{\leq 0}})\xto{\sim} (\widetilde{\C}_\N,\widetilde{\mathbb{E}}_\N,\widetilde{\mathfrak{s}}_\N)$.
\end{example}
\begin{proof}
To check the exactness of $F=Q\circ \mathsf{inc}$, we consider an $\mathfrak{s}$-triangle $A\overset{f}{\lra} B\overset{g}{\lra} C\overset{h}{\dra}$ in $(\C,\mathbb{E},\mathfrak{s})$ with all terms sitting in $\C^{\leq 0}$.
We have to show this $\mathfrak{s}$-triangle is indeed an $\mathfrak{s}_\N$-triangle in $(\C,\mathbb{E}_\N,\mathfrak{s}_\N)$.
Note that $A[1]$ also belongs to $\C^{\leq 0}$ by $\C^{\leq 0}[1]\subseteq \C^{\leq 0}$.
If there is a morphism $N\xto{x}C$ with $N\in\N$, then the composition $h\circ x$ obviously factors through $N\in\N\subseteq\N[1]$, which shows the condition ${\rm (Lex)}$.
Also, for any morphism $A\xto{x}N$ with $N\in\N$, we have $x\circ h[-1]=0$ by $(\C^{\leq 0},\N)=0$.
In particular, $x\circ h[-1]$ factors through $0\in\N[-1]$, showing ${\rm (Rex)}$.

It remains to show that $F$ is dense and fully faithful.
Note that any object $X\in\C$, by the definition of the cotorsion pair $(\C^{\leq -1},\N)$, admits a triangle
\[
X^{\geq 1}[-1]\xto{f} X^{\leq 0}\xto{g} X\xto{h}X^{\geq 1}
\]
with $X^{\leq 0}\in\C^{\leq 0}$ and $X^{\geq 1}\in\N$.
Since $X^{\geq 1}, X^{\geq 1}[-1]\in\N$, we get $g\in\Sn$ by Proposition \ref{prop_saturated}.
We have thus verified the density of $F$.
Let us consider objects $X,Y\in\C^{\leq 0}$.
To show the faithfulness, we assume that there exists a morphism $X\xto{a}Y$ satisfying $Q(a)=0$.
Corollary \ref{cor_phantom}(1) shows that $a$ factors through an object in $\N$.
Then $a=0$ follows from the $\Hom$-orthogonality $(\C^{\leq 0},\N)=0$.
Lastly, to show the fullness, let us consider a morphism $QX\xto{\alpha}QY$ in the quotient category $\widetilde{\C}_\N$.
Since $\overline{\Sn}$ is a multiplicative system, $\alpha$ is represented by a roof diagram  in $\overline{\C}$ such as $\overline{f}/\overline{s}\colon X\xto{\overline{a}}Y'\xleftarrow{\overline{s}}Y$ with $\overline{s}\in\overline{\Sn}$ (equivalently, $s\in\Sn$ by Proposition \ref{prop_saturated}).
Thus we have a fraction $f/s$ in $\C$ corresponding to $\alpha$.
Again, by Proposition \ref{prop_saturated}, we know the cone of $s$ factors through an object in $\N$, which shows that $a$ factors through $s$.
More precisely, there exists a morphism $X\xto{a'}Y$ with $a=s\circ a'$ in $\C$.
Hence, the equality $\alpha=(Qs)^{-1}Qa=Qa'$ guarantees the fullness of $F$.
\end{proof}

We should mention that a similar equivalence has been established in \cite[Thm. E]{Tat21} under a slight different circumstance\footnote{The author wishes to thank Kiriko Kato and Aran Tattar for useful informations concerning the example.}.

\section{Triangulated localizations}\label{sec_Triangulated}
Let $\C$ be a triangulated category and $\N$ a full subcategory of $\C$ which is closed under direct summands, isomorphisms and extensions.
In this section, we will prove that the resulting extriangulated category $\widetilde{\C}_\N$ corresponds to a triangulated category if and only if the given subcategory $\N$ is thick in the triangulated category $\C$.
In this case, the localization $(Q,\mu)\colon (\C,\mathbb{E}_\N,\mathfrak{s}_\N)\to (\widetilde{\C},\widetilde{\mathbb{E}}_\N,\widetilde{\mathfrak{s}}_\N)$ is the usual Verdier quotient.
Actually, we provide a more precise connection between our localization, the Verdier quotient and the biresolving quotient in Example~\ref{ex_list}(3).

Before stating the assertion, we recall the following basic fact on the Verdier quotient.

\begin{lemma}\label{lem_basic_Verdier}
Let us denote by $\mathsf{Ver}\colon \C\to\C/\N$ the Verdier quotient of a triangulated category $\C$ by a thick subcategory $\N\subseteq \C$.
Then, a morphism $s\in\Mor\C$ belongs to $\Sn$ if and only if $\mathsf{Ver}(s)$ is an isomorphism.
\end{lemma}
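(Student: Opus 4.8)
The plan is to prove both implications by tracing through the definitions, using the explicit description of $\Sn=\mathscr{S}'_\N$ from Proposition~\ref{prop_description_of_Sn} together with the fact that, when $\N$ is thick, the class $\Sn$ is exactly $\L=\R$ in the usual sense (Example~\ref{ex_list}(1)). First I would recall that, since $\N$ is thick, Lemma~\ref{lem_relative_str_thick} tells us $\mathbb{E}_\N=\mathbb{E}$, so $\Sn$ as defined for $(\C,\mathbb{E}_\N,\mathfrak{s}_\N)$ agrees with the class of morphisms whose cone lies in $\N$ — i.e.\ precisely the morphisms inverted by the Verdier quotient. For the ``only if'' direction: if $s\in\Sn$, then $s$ fits in a triangle $A\xrightarrow{f}B\xrightarrow{s}C\xrightarrow{h}A[1]$ with $\overline{f}=0$ and $\overline{h}=0$ by Proposition~\ref{prop_description_of_Sn}; since $\N$ is thick, $\overline{f}=0$ and $\overline{h}=0$ force (via Lemma~\ref{lem_extension_closed} and the $2$-out-of-$3$ property) the cone of $s$, which sits in a triangle built from the data, to lie in $\N$, whence $\mathsf{Ver}(s)$ is an isomorphism. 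Alternatively, and more cleanly, one observes $\Sn=\L=\R$ when $\N$ is thick, and $\L$ consists of $\mathfrak{s}$-inflations with cone in $\N$, which are exactly the morphisms with cone in $\N$ in the triangulated sense.

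For the ``if'' direction: suppose $\mathsf{Ver}(s)$ is an isomorphism. Complete $s$ to a triangle $A\xrightarrow{f}B\xrightarrow{s}C\xrightarrow{h}A[1]$; then the cone of $s$ (appropriately rotated) lies in $\N$ because a morphism in a triangulated category becomes invertible in the Verdier quotient precisely when its cone is in the thick subcategory. Having the cone in $\N$, I would then show $\overline{f}=0$ and $\overline{h}=0$ in $\overline{\C}=\C/[\N]$: from the triangle with third term in $\N$, the composite $B\xrightarrow{s}C$ has the property that $f$ factors through $\cone(s)[-1]$ (or a summand argument), but more directly, a morphism $g$ whose fibre lies in $\N$ factors — up to $\N$ — appropriately, so by Lemma~\ref{lem_phantom} one gets $\overline{g}$ monic and epic in $\overline{\C}$, i.e.\ $s\in\mathscr{S}'_\N=\Sn$. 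Concretely, $\cone(s)\in\N$ means $h$ factors through $\cone(s)\in\N$, giving $\overline{h}=0$; and the rotated triangle shows $\cocone(s)\in\N$ as well (thickness again, since two of three terms behave well), giving $\overline{f}=0$.

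Actually the slickest route is: $\mathsf{Ver}(s)$ is an isomorphism $\iff$ $s$ has cone in $\N$ (standard Verdier fact, which I may assume) $\iff$ in the rotated triangles both the ``outgoing'' map into $\cone(s)\in\N$ and (by rotation, using $\cocone(s)\in\N$ which follows from thickness) the ``incoming'' map from $\cocone(s)\in\N$ become zero in $\overline{\C}$, by Corollary~\ref{cor_phantom}(2) applied to the objects $\cone(s)$ and $\cocone(s)$ and functoriality. This is exactly the condition $\overline{f}=0$, $\overline{h}=0$ defining $\mathscr{S}'_\N=\Sn$. So I would structure the proof as a short chain of equivalences anchored on Proposition~\ref{prop_description_of_Sn} and the elementary Verdier characterization.

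I expect the main (minor) obstacle to be bookkeeping with rotations of triangles: one must be careful that ``cone in $\N$'' and ``cocone in $\N$'' are genuinely equivalent here, which uses that $\N$ is thick (closed under shifts and $2$-out-of-$3$), and that the $\overline{f}=0$, $\overline{h}=0$ conditions correctly match the triangle rotations. Once that is set up the argument is essentially immediate from the results already established, so the lemma can be disposed of in a few lines; no genuinely hard step is involved.
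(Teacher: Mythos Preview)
Your proposal is correct, and the argument via the cone characterization works, but the paper takes a slightly more direct route that avoids the detour through $\cone(s)\in\N$ and the rotation bookkeeping you anticipate as an obstacle. The paper simply applies the triangle functor $\mathsf{Ver}$ to the triangle $A\xrightarrow{f}B\xrightarrow{s}C\xrightarrow{g}A[1]$ and uses the \emph{morphism-level} fact that $\mathsf{Ver}(f)=0$ in $\C/\N$ if and only if $\overline{f}=0$ in $\C/[\N]$ (the standard description of the kernel of the Verdier functor on morphisms): since $s\in\Sn=\mathscr{S}'_\N$ means $\overline{f}=\overline{g}=0$, one gets $\mathsf{Ver}(f)=\mathsf{Ver}(g)=0$ and hence $\mathsf{Ver}(s)$ is an isomorphism; conversely $\mathsf{Ver}(s)$ an isomorphism forces $\mathsf{Ver}(f)=\mathsf{Ver}(g)=0$, whence $\overline{f}=\overline{g}=0$. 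Your approach instead pivots on the \emph{object-level} fact $\mathsf{Ver}(s)$ iso $\Leftrightarrow \cone(s)\in\N$, and then uses thickness (closure under shifts) to see that both the source of $f$ and the target of $g$ lie in $\N$, giving $\overline{f}=\overline{g}=0$ trivially. Both routes are equally short once stated cleanly; the paper's version has the minor advantage of not invoking closure of $\N$ under shifts separately, while yours has the advantage of not needing the morphism-kernel description of the Verdier quotient.
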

\begin{proof}
Let $A\overset{f}{\lra}B\overset{s}{\lra}C\overset{g}{\lra}A[1]$ be a triangle in $\C$.
If $\overline{f}=\overline{g}=0$,
applying $\mathsf{Ver}$ to the triangle, we get an isomorphism $\mathsf{Ver}(s)$.
Conversely, if $\mathsf{Ver}(s)$ is an isomorphism, we get $\mathsf{Ver}(f)=\mathsf{Ver}(g)=0$.
By the well-known property of $\mathsf{Ver}$ (e.g. \cite[Lem. 2.1.26]{Nee01}), we get $\overline{f}=\overline{g}=0$.
\end{proof}

Our localization the resulting category $\widetilde{\C}_\N$ of which is triangulated is characterized as follows.

\begin{theorem}\label{cor_tri}
We consider the extriangulated category $(\C,\mathbb{E}_\N,\mathfrak{s}_\N)$ and the localization $(Q,\mu)\colon (\C,\mathbb{E}_\N,\mathfrak{s}_\N)\to(\widetilde{\C}_\N,\widetilde{\mathbb{E}}_\N,\widetilde{\mathfrak{s}}_\N)$ with respect to the subcategory $\N$.
Then the following three conditions are equivalent.
\begin{enumerate}
\item[\textnormal{(i)}]
The extriangulated category $(\widetilde{\C}_\N,\widetilde{\mathbb{E}}_\N,\widetilde{\mathfrak{s}}_\N)$ corresponds to a triangulated category.
\item[\textnormal{(ii)}]
$\N$ is a thick subcategory of the triangulated category $(\C,\mathbb{E},\mathfrak{s})$.
\item[\textnormal{(iii)}]
$\N$ is a biresolving subcategory of the extriangulated category $(\C,\mathbb{E}_\N,\mathfrak{s}_\N)$.
\end{enumerate}
Under the above equivalent conditions, the localization $Q\colon (\C,\mathbb{E}_\N,\mathfrak{s}_\N)\to(\widetilde{\C}_\N,\widetilde{\mathbb{E}}_\N,\widetilde{\mathfrak{s}}_\N)$ coincides with the usual Verdier quotient.
\end{theorem}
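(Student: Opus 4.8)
The plan is to run the cycle (ii) $\Rightarrow$ (i) $\Rightarrow$ (iii) $\Rightarrow$ (ii); only (i) $\Rightarrow$ (iii) requires real work, and inside it the delicate step is transporting an $\mathfrak{s}_\N$-inflation along an isomorphism of $\widetilde{\C}_\N$ that a priori is presented only by a roof. The implication (ii) $\Rightarrow$ (i) together with the last assertion is immediate: if $\N$ is thick in $(\C,\mathbb{E},\mathfrak{s})$, then Lemma~\ref{lem_relative_str_thick} identifies $(\C,\mathbb{E}_\N,\mathfrak{s}_\N)$ with $(\C,\mathbb{E},\mathfrak{s})$, so by Lemma~\ref{lem_basic_Verdier} the class $\Sn$ is exactly the set of morphisms inverted by the Verdier quotient $\mathsf{Ver}\colon\C\to\C/\N$; hence $\widetilde{\C}_\N=\C[\Sn^{-1}]$ is that Verdier quotient (Example~\ref{ex_list}(1)) and in particular corresponds to a triangulated category.

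For (iii) $\Rightarrow$ (ii): as $\N$ is already extension-closed and closed under summands, it is enough to show it is closed under $[1]$ and $[-1]$, and these two are interchanged by passing to $\C^{\op}$ (the subfunctor $\mathbb{E}_\N$ is self-dual, swapping {\rm (Lex)} and {\rm (Rex)}, and ``biresolving'' is a self-dual notion). Given $N\in\N$, biresolving produces an $\mathfrak{s}_\N$-deflation $p\colon N'\to N[1]$ with $N'\in\N$, fitting into an $\mathfrak{s}_\N$-triangle $W\lra N'\overset{p}{\lra}N[1]\overset{\delta}{\dashrightarrow}$. Rotating this triangle inside the triangulated category $\C$ exhibits $W$ as an extension of $N'$ by $N$, so $W\in\N$ by extension-closedness; then Lemma~\ref{lem_relative_str}(1) makes $Q(p)$ an isomorphism, and since $Q(N')=0$ by Corollary~\ref{cor_phantom}(2) we conclude $Q(N[1])=0$, i.e. $N[1]\in\N$.

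For (i) $\Rightarrow$ (iii): by Corollary~\ref{cor_thick} we already know $\N$ is thick in $(\C,\mathbb{E}_\N,\mathfrak{s}_\N)$, so it remains only to produce, for each $C\in\C$, an $\mathfrak{s}_\N$-inflation $C\to N$ and an $\mathfrak{s}_\N$-deflation $N'\to C$ with $N,N'\in\N$; these are dual, so I treat the inflation. Since $\widetilde{\C}_\N$ is triangulated, $Q(C)\to 0$ is an $\widetilde{\mathfrak{s}}_\N$-inflation, so Lemma~\ref{lem_inf}(1) writes it as $\beta\circ Q(l)\circ\gamma$ for an $\mathfrak{s}_\N$-inflation $l\colon X\to Y$ of $\C$ and isomorphisms $\beta,\gamma$; hence $Y\in\N$ and $Q(X)\cong Q(C)$ by Corollary~\ref{cor_phantom}(2). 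The obstacle is that $X$ is not literally $C$: using the calculus of fractions and Proposition~\ref{prop_saturated}, the isomorphism $Q(X)\cong Q(C)$ yields an object $P$ with $t\colon P\to X$ and $b\colon P\to C$ both in $\Sn$; writing $t=r_t m_t$ and $b=r_b m_b$ with $m_t,m_b\in\L$ and $r_t,r_b$ split epimorphisms whose cocones lie in $\N$ (Corollary~\ref{cor_RL}), so that $m_t\colon P\to X\oplus K_t$ and $m_b\colon P\to C\oplus K_b$ with $K_t,K_b\in\N$, I first form the $\mathfrak{s}_\N$-inflation $\ell_P:=(l\oplus\id_{K_t})\circ m_t\colon P\to Y\oplus K_t$, whose target lies in $\N$. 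Taking a weak pushout of $\ell_P$ along $m_b$ gives an $\mathfrak{s}_\N$-inflation $\ell_P'\colon C\oplus K_b\to E$ together with a map $\nu\colon Y\oplus K_t\to E$ whose cone is $\cone(m_b)\in\N$; hence $E$ sits in a triangle $Y\oplus K_t\to E\to\cone(m_b)$ with both ends in $\N$, so $E\in\N$ by extension-closedness of $\N$ in $\C$, and precomposing $\ell_P'$ with the split $\mathfrak{s}_\N$-inflation $C\hookrightarrow C\oplus K_b$ yields the required $\mathfrak{s}_\N$-inflation $C\to E$. The main burden is the bookkeeping of which composites stay $\mathfrak{s}_\N$-inflations and the need to use the homotopy pushout rather than $\L$-morphisms directly, since $\cone(l)$ is generally not in $\N$ (indeed $Q(\cone(l))\cong Q(C)[1]$).
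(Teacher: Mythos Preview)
Your proof is correct and follows essentially the same route as the paper's: the same cycle (ii)$\Rightarrow$(i)$\Rightarrow$(iii)$\Rightarrow$(ii), with (ii)$\Rightarrow$(i) and (iii)$\Rightarrow$(ii) handled identically via Lemma~\ref{lem_relative_str_thick}, Lemma~\ref{lem_basic_Verdier}, and the observation that the cocone of the biresolving deflation lies in $\N$ by extension-closedness. The only substantive difference is in (i)$\Rightarrow$(iii): the paper represents the isomorphism $\gamma$ by a roof $A\xto{s}A''\xleftarrow{s'}A'$, reduces $s$ to $\L$, takes a single weak pushout of the given $\mathfrak{s}_\N$-inflation $f$ along $s'$, and invokes {\rm (MR3)} to force the middle map into $\Sn$ (hence the new target into $\N$); you instead take a right roof $X\xleftarrow{t}P\xto{b}C$, decompose \emph{both} legs via Corollary~\ref{cor_RL}, assemble the inflation $\ell_P$ into $\N$ from the $X$-side, and push it along the $\L$-part $m_b$ so that the new target lies in $\N$ by a direct extension-closedness argument. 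Your approach trades the appeal to {\rm (MR3)} for slightly more explicit bookkeeping with the $\R_{\mathsf{sp}}\circ\L$ factorizations; both are short and neither is materially simpler than the other.
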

\begin{proof}
(ii) $\Rightarrow$ (i):
Example \ref{ex_relative_str_thick} shows that $\mathbb{E}^L_\N(C,A)=\mathbb{E}^R_\N(C,A)=\mathbb{E}(C,A)$ holds for any $A,C\in\C$.
By Lemma \ref{lem_basic_Verdier}, the localization $Q:\C\to\widetilde{\C}_\N$ is nothing other than the Verdier quotient of $\C$ with respect to the thick subcategory $\N$.

(iii) $\Rightarrow$ (ii):
Let $N\in\N$ be given.
For the object $N[1]$, by definition, there exists an $\mathfrak{s}_\N$-triangle $N''\lra N'\lra N[1]\dashrightarrow$ with $N'\in\N$.
Since the $\mathfrak{s}_\N$-triangle is a part of a triangle $N\lra N''\lra N'\lra N[1]$ of $\C$ and $\N$ is extension-closed, we get $N''\in\N$.
It is sent to an $\widetilde{\mathfrak{s}}_\N$-triangle $QN''\lra QN'\lra QN[1]\dashrightarrow$, which gurantees $QN[1]\cong 0$ and $N[1]\in\N$.
Thus $\N$ is closed under suspensions $[1]$.
By the dual argument, we obtain $\N[-1]\subseteq\N$ and conclude that $\N$ is a thick subcategory in $(\C,\mathbb{E},\mathfrak{s})$.

(i) $\Rightarrow$ (iii):
Suppose that $\widetilde{\C}_\N$ is a triangulated category.
For an object $A\in\C$, we shall construct an $\mathfrak{s}_\N$-inflation $A\to N'$ with $N'\in\N$.
Let us consider a triangle $QA\overset{\alpha}{\lra} 0\lra QA[1]\overset{\id}{\lra} QA[1]$ in $\widetilde{\C}_\N$.
Since $QA\xto{\alpha} 0$ is an $\widetilde{\mathfrak{s}}_\N$-inflation,
by Lemma \ref{lem_inf},
$\alpha=\beta\circ Q(f)\circ \gamma$ holds for some isomorphisms $QA\xto{\gamma}QA', QN\xto{\beta}0$ and an $\mathfrak{s}_\N$-inflation $A'\xto{f}N$.
Note that Corollary \ref{cor_phantom} shows $N\in\N$ and $\gamma$ is represented by $A\xto{s}A''\xleftarrow{s'}A'$ with $s,s'\in\Sn$.
We may assume $s\in\L$.
In fact, by Lemma \ref{lem_RL}, any $s\in\Sn$ can be factorized as $s=s_2\circ s_1$ with $s_1\in\L, s_2\in\R_{\mathsf{sp}}$.
The retraction $s_2$ has a right inverse $s'_2\in\L_{\mathsf{sp}}$.
Thus, $\gamma$ is also represented by $A\xto{s_1}\bullet\xleftarrow{}A'$ with $s_1\in\L$.

Complete an $\mathfrak{s}_\N$-triangle $A'\overset{f}{\lra}N\overset{g}{\lra}C\overset{\delta}{\dra}$ and take a weak pushout of $f$ along $s'$ to get the following morphisms between $\mathfrak{s}_\N$-triangles.
\[
\xymatrix{
A'\ar[r]^f\ar[d]_{s'}\ar@{}[rd]|{\rm (wPO)}&N\ar[r]^g\ar[d]^{s''}&C\ar@{=}[d]\ar@{-->}[r]^\delta&\\
A''\ar[r]^{f'}&N'\ar[r]^{g'}&C\ar@{-->}[r]^{s'_*\delta}&
}
\]
By {\rm (MR3)}, we may assume $s''\in\Sn$.
Thus we get $N'\in\N$.
As $f'$ is an $\mathfrak{s}_\N$-inflation and $s\in\L$, the composed morphism $f'\circ s\colon A\to N'$ is a desired one.
Combining the dual argument, we finish the proof.
\end{proof}

\begin{remark}
Combining Example~\ref{ex_relative_str_thick} and Theorem~\ref{cor_tri},
if the extension-closed subcategory $\N\subseteq\C$ becomes biresolving in $(\C,\mathbb{E}_\N,\mathfrak{s}_\N)$, then we have $(\C,\mathbb{E}_\N,\mathfrak{s}_\N)=(\C,\mathbb{E},\mathfrak{s})$.
\end{remark}

\section{Exact localizations}\label{sec_Exact}
The aim of this section is to explore when the resulting category $\widetilde{\C}_\N$ is an exact category.
We keep our set-up on $\C$ and $\N$, that is, $\N$ is a full subcategory of a triangulated category $\C$ which is closed under direct summands, isomorphisms and extensions,
and consider the associated extriangulated category $(\C,\mathbb{E}_\N,\mathfrak{s}_\N)$ together with the localization $(Q,\mu)\colon (\C,\mathbb{E}_\N,\mathfrak{s}_\N)\to(\widetilde{\C}_\N,\widetilde{\mathbb{E}}_\N,\widetilde{\mathfrak{s}}_\N)$ with respect to $\N$.
\subsection{A necessary and sufficient condition}\label{subsec_Exact}

If $\N$ is functorially finite, Theorem \ref{cor_exact} provides an exact version of Theorem \ref{cor_tri}.
In advance, we prepare the following lemma for later use.

\begin{lemma}\label{lem_exact}
Assume that $\N$ satisfies the condition $\cone(\N,\N)=\C$ in the triangulated category $(\C,\mathbb{E},\mathfrak{s})$.
Then, the following equalities hold:
\begin{eqnarray*}
\mathbb{E}^L_\N(C,A)&=&\{h\in\mathbb{E}(C,A)\mid h[-1]\text{\ factors through an object\ }N\in\N\},\\
\mathbb{E}^R_\N(C,A)&=&\{h\in\mathbb{E}(C,A)\mid h\text{\ factors through an object\ }N\in\N\}.
\end{eqnarray*}
In particular, any $\mathfrak{s}_\N$-triangle $A\overset{f}{\lra} B\overset{g}{\lra} C\overset{h}{\dra}$ satisfies $\overline{h}=\overline{h[-1]}=0$.
Hence we have a monomorphism $Q(f)$ and an epimorphism $Q(g)$.
\end{lemma}
\begin{proof}
Let $A\overset{f}{\lra} B\overset{g}{\lra} C\overset{h}{\dra} $ be an $\mathfrak{s}_\N^L$-triangle of $\C$, that is, $C\xto{h}A[1]$ belongs to $\mathbb{E}_\N^L(C,A)$.
The assumption gives us a triangle $N'\lra N\overset{c}{\lra} C\lra N'[1]$ in $\C$ with $N,N'\in\N$.
We take a homotopy pullback of $g$ along $c$ and consider the commutative diagram below obtained by the octahedral axiom.
\[
\xymatrix{
&&N'\ar@{=}[r]\ar[d]&N'\ar[d]&\\
N[-1]\ar[d]_{c[-1]}\ar[r]^{h'[-1]}\ar[d]&A\ar[r]\ar@{=}[d]&B'\ar[r]\ar[d]&N\ar[d]^c\\
C[-1]\ar[r]^{h[-1]}&A\ar[r]^f&B\ar[r]^g&C\ar@{}[lu]|{\rm (wPO)}\\
}
\]
By the condition ${\rm (Lex)}$, $h'[-1]$ factors through an object in $\N$, namely, there exists a factorization $h'[-1]\colon N[-1]\xto{x_1} N''\xto{x_2} A$ with $N''\in\N$.
By taking homotopy pushout of $c[-1]$ along $x_1$, we get a  morphism of triangles:
\[
\xymatrix{
N[-1]\ar@{}[rd]|{\rm (wPO)}\ar[r]^{c[-1]}\ar[d]_{x_1}&C[-1]\ar[r]\ar[d]&N'\ar[r]\ar@{=}[d]&N\ar[d]\\
N''\ar[r]&M\ar[r]&N'\ar[r]&N''[1]
}
\]
Since $N', N''\in\N$ and $\N$ is extension-closed, it follows that $C[-1]\xto{h[-1]}A$ factors through $M\in\N$.
Hence, Proposition \ref{prop_saturated} shows that $Q(f)$ is monic.
A dual argument shows that $Q(g)$ is epic.
\end{proof}

The first aim of this subsection is to prove the assertion below.

\begin{theorem}\label{cor_exact}
Let us consider the following conditions.
\begin{enumerate}
\item[\textnormal{(i)}]
The extriangulated category $(\widetilde{\C}_\N,\widetilde{\mathbb{E}}_\N,\widetilde{\mathfrak{s}}_\N)$ corresponds to an exact category.
\item[\textnormal{(ii)}]
$\N$ satisfies the condition $\cone(\N,\N)=\C$ in the triangulated category $(\C,\mathbb{E},\mathfrak{s})$.
\item[\textnormal{(iii)}]
$\N$ is a Serre subcategory of the extriangulated category $(\C,\mathbb{E}_\N,\mathfrak{s}_\N)$.
\end{enumerate}
The condition \textnormal{(ii)} always implies \textnormal{(i)} and \textnormal{(iii)}.
Suppose that $\N$ is functorially finite in $\C$.
Then, the all conditions are equivalent.
\end{theorem}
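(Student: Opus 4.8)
\textbf{The implication (ii) $\Rightarrow$ (i).} The plan is to verify the criterion of Proposition~\ref{prop_extri_to_exact}: every $\widetilde{\mathfrak{s}}_\N$-inflation is monic and every $\widetilde{\mathfrak{s}}_\N$-deflation is epic in $\widetilde{\C}_\N$. By Lemma~\ref{lem_inf}, an $\widetilde{\mathfrak{s}}_\N$-inflation has the form $\beta\circ Q(f)\circ\gamma$ for an $\mathfrak{s}_\N$-inflation $f$ and isomorphisms $\beta,\gamma$; since isomorphisms are monic, it suffices to show $Q(f)$ is monic for every $\mathfrak{s}_\N$-inflation $f$. This is precisely the content of Lemma~\ref{lem_exact}, which under hypothesis (ii) gives $\overline{h[-1]}=0$ for the connecting morphism $h$, hence $Q(f)$ monic by Proposition~\ref{prop_saturated}. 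Dually $Q(g)$ is epic for every $\mathfrak{s}_\N$-deflation $g$. Thus (i) follows. The implication (ii) $\Rightarrow$ (iii) is a short direct argument: if $A\lra B\lra C\dashrightarrow$ is an $\mathfrak{s}_\N$-conflation with $B\in\N$, then by Lemma~\ref{lem_exact} the connecting morphism $h$ and $h[-1]$ both factor through $\N$; feeding $h[-1]$ (which factors through $\N$) together with $B\in\N$ into Lemma~\ref{lem_extension_closed}(2) yields $C\in\N$, and dually $A\in\N$, so $\N$ is Serre in $(\C,\mathbb{E}_\N,\mathfrak{s}_\N)$.

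\textbf{The equivalence under functorial finiteness.} It remains to prove (i) $\Rightarrow$ (ii) and (iii) $\Rightarrow$ (ii) when $\N$ is functorially finite. For (iii) $\Rightarrow$ (ii): given $A\in\C$, use that $\N$ is contravariantly finite to pick a right $\N$-approximation $N\xto{a}A$, completing to a triangle $A[-1]\lra D\lra N\xto{a}A$; the approximation property makes the connecting morphism $D\lra N$ lie in $\mathbb{E}_\N^R(\cdot,\cdot)$-type condition, and after passing to an honest $\mathfrak{s}_\N$-conflation one shows $D\in\N$ using the Serre property applied with $N\in\N$ in the ``middle'' position — more precisely, one arranges an $\mathfrak{s}_\N$-conflation with middle term in $\add\N$, forcing the outer terms into $\N$, which exhibits $A$ as a cone $\cone(N'',N')$ with $N',N''\in\N$. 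Dually using covariant finiteness one handles the other side, giving $\cone(\N,\N)=\C$. For (i) $\Rightarrow$ (ii): if $\widetilde{\C}_\N$ is exact, then for any $A\in\C$ a right $\N$-approximation $N\to A$ becomes, after localization, an $\widetilde{\mathfrak{s}}_\N$-deflation onto $Q(A)$ whose cocone is $Q$ of the approximation kernel; exactness forces this deflation to be epic and its cocone to sit in a short exact sequence, and tracking this back along $Q$ via Proposition~\ref{prop_saturated} and Corollary~\ref{cor_phantom} shows the connecting morphism of the defining triangle factors through $\N$, which is exactly the statement that $A$ is a cone of a morphism between objects of $\N$.

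\textbf{Main obstacle.} The delicate point is the two ``converse'' implications (i) $\Rightarrow$ (ii) and (iii) $\Rightarrow$ (ii): one must manufacture, from an abstract property of $\widetilde{\C}_\N$ (exactness, resp.\ the Serre condition of $\N$), a concrete triangle $N'\lra N\lra A\lra N'[1]$ in the \emph{original} $\C$ witnessing $A\in\cone(\N,\N)$. Functorial finiteness is the tool that supplies a candidate morphism $N\to A$ from $\N$; the work is to prove its cocone again lies in $\N$, and here one cannot argue purely in $\widetilde{\C}_\N$ — one must lift back through $Q$ and control the ideal $[\N]$, using that approximations behave well with respect to the $\mathfrak{s}_\N$-structure (so that $N\to A$ can be taken an $\mathfrak{s}_\N$-deflation up to the ideal $[\N]$) together with Lemma~\ref{lem_extension_closed} and Lemma~\ref{lem_phantom} to close the loop. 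I expect the bookkeeping of which connecting morphisms factor through $\N$ — and the careful use of Corollary~\ref{cor_RL} to replace arbitrary elements of $\Sn$ by members of $\L$ or $\R$ — to be the technically heaviest part.
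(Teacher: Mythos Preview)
Your treatment of (ii) $\Rightarrow$ (i) and (ii) $\Rightarrow$ (iii) matches the paper's argument and is correct.

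For the converse implications, however, there is a genuine gap. A single right $\N$-approximation $y:N\to C$ (with triangle $C'\to N\xto{y} C\xto{h} C'[1]$) only guarantees the {\rm (Lex)} condition for $h$: any $N''\to C$ factors through $y$, so $h$ precomposed with it vanishes. It does \emph{not} force {\rm (Rex)}, so the triangle is in general only an $\mathfrak{s}^L_\N$-conflation, not an $\mathfrak{s}_\N$-conflation. Consequently your (i) $\Rightarrow$ (ii) sketch breaks: after localization the map $Q(N)\to Q(C)$ is just $0\to Q(C)$, and you have no reason to know it is an $\widetilde{\mathfrak{s}}_\N$-deflation; there is nothing to feed into ``exactness'' without first having an honest $\mathfrak{s}_\N$-conflation in hand. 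Your (iii) $\Rightarrow$ (ii) sketch has the same problem in disguise (``one arranges an $\mathfrak{s}_\N$-conflation with middle term in $\add\N$'' is exactly the step that needs a construction), and the phrase ``dually using covariant finiteness one handles the other side'' is misplaced --- there is no second side, since (ii) is the single condition $\cone(\N,\N)=\C$.

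The paper's fix is to use \emph{both} approximations simultaneously: starting from the right $\N$-approximation $y:N\to C$ with cocone $C'$, take a left $\N$-approximation $a:C'\to N'$ and form the homotopy pushout of $x:C'\to N$ along $a$. This produces a single triangle
\[
C'\xrightarrow{\binom{x}{-a}} N\oplus N' \longrightarrow B \longrightarrow C'[1],
\]
and one checks that $\binom{x}{-a}$ is a left $\N$-approximation of $C'$ while the second map is a right $\N$-approximation of $B$; hence this triangle is an $\mathfrak{s}_\N$-conflation with middle term in $\N$. This one conflation then serves both converses at once: under (i), applying $Q$ gives the $\widetilde{\mathfrak{s}}_\N$-conflation $QC'\to 0\to QB$, whose inflation is monic by exactness, forcing $QC'\cong 0$ and hence $C'\in\N$ by Corollary~\ref{cor_phantom}; under (iii), the Serre property with $N\oplus N'\in\N$ in the middle gives $C'\in\N$ directly. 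Either way $C\in\cone(\N,\N)$. The covariant finiteness is not used ``dually'' but rather in tandem with contravariant finiteness inside this single construction.
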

\begin{proof}
(ii) $\Rightarrow$ (i):
Thanks to Lemmas \ref{lem_inf} and \ref{lem_exact}, we conclude that any $\widetilde{\mathfrak{s}}_\N$-inflation and $\widetilde{\mathfrak{s}}_\N$-deflation turn out to be monic and epic, respectively.
Therefore Proposition \ref{prop_extri_to_exact} guarantees that  the extriangulated structure on $\widetilde{\C}_\N$ corresponds to an exact category.

(ii) $\Rightarrow$ (iii): By the above argument, any $\mathfrak{s}_\N$-triangle $A\overset{f}{\lra} B\overset{g}{\lra} C\overset{h}{\dra} $ of $\C$ satisfies that $h$ and $h[-1]$ factor through objects in $\N$.
If $B\in\N$, by Lemma \ref{lem_extension_closed}, we get $A,C\in\N$.
Since $\N$ is extension-closed, it is a Serre subcategory of $(\C,\mathbb{E}_\N,\mathfrak{s}_\N)$.

\emph{In the rest of the proof, we suppose that $\N$ is functorially finite in $\C$.}
To confirm implications (i) $\Rightarrow$ (ii) and (iii) $\Rightarrow$ (ii), we take an object $C\in\C$, and consider a right $\N$-approximation $y$ of $C$ as a part of a triangle $C'\overset{x}{\lra} N\overset{y}{\lra} C\overset{z}{\lra} C'[1]$ with $N\in\N$.
Moreover, we consider a left $\N$-approximation $a\colon C'\to N'$ of $C'$ and take a homotopy pushout of $x$ along $a$, which induces a morphism of triangles.
\begin{equation}\label{diag_exact}
\xymatrix{
C'\ar[r]^x\ar[d]_a\ar@{}[rd]|{\rm (wPO)}&N\ar[r]^y\ar[d]^b&C\ar[r]^z\ar@{=}[d]&C'[1]\ar[d]\\
N'\ar[r]^{x'}&B\ar[r]^{y'}&C\ar[r]^{z'}&N'[1]
}
\end{equation}
Note that $\binom{x}{a}\colon C'\to N\oplus N'$ is a left $\N$-approximation of $C'$.
Moreover, $(b\ x')$ is also a right $\N$-approximation of $B$.
In fact, if there exists a morphism $f\colon N''\to B$ from an object $N''\in\N$, since $y$ is a right $\N$-approximation, $y'\circ f$ factors through $y$, say $y'\circ f=y\circ f'$ for some morphism $f'\colon N''\to N$.
By the equality $y'\circ (f-b\circ f')=(y-y'\circ b)\circ f'=0$, we conclude that $f$ factors through $N\oplus N'$.
Hence the triangle
\begin{equation}\label{eq_exact}
C'\overset{\binom{x}{-a}}{\lra} N\oplus N'\overset{(b\ x')}{\lra} B\lra C'[1]
\end{equation}
corresponding to ${\rm (wPO)}$ gives rise to an $\mathfrak{s}_\N$-triangle.

By using the $\mathfrak{s}_\N$-triangle (\ref{eq_exact}), we have the remaining implications.

(i) $\Rightarrow$ (ii):
Since any $\widetilde{\mathfrak{s}}_\N$-inflation is monic due to our assumption, the $\widetilde{\mathfrak{s}}_\N$-triangle $QC'{\lra} 0{\lra} QB\dra$ obtained from the triangle (\ref{eq_exact}) forces $C'\in\N$.
Therefore we obtain $C\in\cone(\N,\N)$.

(iii) $\Rightarrow$ (ii):
Since $\N$ is a Serre subcategory, the $\mathfrak{s}_\N$-triangle (\ref{eq_exact}) shows $C',B\in\N$.
Thus we conclude $C\in\cone(\N,\N)$.
\end{proof}

Next, we shall discuss on abelian exact structures on $\widetilde{\C}_\N$.

\begin{corollary}\label{cor_abelian}
Assume that $\cone(\N,\N)=\C$ holds in the triangulated category $(\C,\mathbb{E},\mathfrak{s})$.
Then, the following assertions hold.
\begin{enumerate}
\item[{\rm (1)}]
The resulting extriangulated category $(\widetilde{\C},\widetilde{\mathbb{E}}_\N,\widetilde{\mathfrak{s}}_\N)$ corresponds to an abelian exact category.
\item[{\rm (2)}]
The exact functor $(Q,\mu)\colon (\C,\mathbb{E}_\N,\mathfrak{s}_\N)\to(\widetilde{\C}_\N,\widetilde{\mathbb{E}}_\N,\widetilde{\mathfrak{s}}_\N)$ induces a cohomological functor $Q\colon (\C,\mathbb{E},\mathfrak{s})\to\widetilde{\C}_\N$ from the original triangulated category.
\item[{\rm (3)}] The cohomological functor $Q\colon (\C,\mathbb{E},\mathfrak{s})\to\widetilde{\C}_\N$ has the universality: Let $\A$ be an abelian category which gives rise to an extriangulated category.
For any cohomological functor $H\colon (\C,\mathbb{E},\mathfrak{s})\to \A$ with $\N\subseteq\Ker H$, there uniquely exists an exact functor $H'\colon \widetilde{\C}_\N\to\A$ with $H=H'\circ Q$.
\end{enumerate}
\end{corollary}

For the readability purpose, we divide the proof of Corollary \ref{cor_abelian} in several steps.

\begin{claim}\label{claim_abelian2}
Assume that $\cone(\N,\N)=\C$ holds.
Let us consider a triangle $A\overset{f}{\lra} B\overset{g}{\lra}N[1]\overset{h}{\lra}A[1]$ with $N\in\N$.
Then, $Q(f)$ is an $\widetilde{\mathfrak{s}}_\N$-inflation.
\end{claim}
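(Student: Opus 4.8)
The plan is to realise $Q(f)$, via Lemma \ref{lem_inf}(1), as a composite $\beta\circ Q(f_0)\circ\gamma$ in which $f_0$ is an $\mathfrak{s}_\N$-inflation and $\beta,\gamma$ are isomorphisms of $\widetilde{\C}_\N$. First I would rotate the given triangle to $N\overset{u}{\lra}A\overset{f}{\lra}B\overset{g}{\lra}N[1]$, so that $\cocone(f)=N\in\N$ and $\cone(f)=N[1]$; since the morphism $u$ preceding $f$ factors through $N\in\N$, Proposition \ref{prop_saturated} already gives that $Q(f)$ is a monomorphism. The key local observation I would then isolate is the following: for a morphism $s$ occurring in a triangle $X\overset{s}{\lra}Y\lra N'\overset{\delta}{\lra}X[1]$ with $N'\in\N$, the associated $\mathfrak{s}$-conflation $X\overset{s}{\lra}Y\lra N'$ is an $\mathfrak{s}_\N$-conflation exactly when $\overline{\delta[-1]}=0$ in $\overline{\C}$. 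Indeed the condition {\rm (Rex)} of Proposition \ref{prop_relative_str} holds automatically, because the source $N'$ of the $\mathbb{E}$-extension lies in $\N$ (so every composite $y\circ\delta[-1]$ factors through $N'[-1]\in\N[-1]$), whereas testing {\rm (Lex)} against $\id_{N'}$ shows it amounts to $\delta$ factoring through $\N[1]$, i.e.\ to $\overline{\delta[-1]}=0$. Together with Proposition \ref{prop_saturated} this yields: \emph{a morphism whose cone lies in $\N$ is an $\mathfrak{s}_\N$-inflation if and only if its image under $Q$ is monic}, and in that case it moreover belongs to $\Sn$ (by the description of $\Sn$ in Proposition \ref{prop_description_of_Sn}, $Q(s)$ being also epic with cokernel $Q(N')=0$).

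Next I would use $\cone(\N,\N)=\C$ to split $f$. Choosing an $\mathfrak{s}$-conflation $N_2\lra N_1\overset{c}{\lra}N[1]$ with $N_1,N_2\in\N$ and forming the homotopy pullback of $g$ along $c$, one obtains an object $E$ together with $\mathfrak{s}$-conflations $A\overset{\widetilde f}{\lra}E\lra N_1$ and $N_2\lra E\overset{e}{\lra}B$ and a factorization $f=e\circ\widetilde f$. The connecting morphism of the first conflation is $h\circ c\colon N_1\to A[1]$, whose desuspension $h[-1]\circ c[-1]$ factors through $N\in\N$ (since $h[-1]=\pm u$ does); hence by the observation above $\widetilde f$ is an $\mathfrak{s}_\N$-inflation with cone $N_1\in\N$, and in fact $\widetilde f\in\Sn$, so $Q(\widetilde f)$ is an isomorphism. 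This reduces the claim to the morphism $e\colon E\to B$, for which again $\cocone(e)=N_2\in\N$. Since repeating the pullback (or a symmetric homotopy pushout, cf.\ Corollary \ref{cor_RL}) only reproduces a morphism of the same shape, the remaining step — and what I expect to be the technical heart — is to handle $e$ by additionally resolving $B$ through $\cone(\N,\N)=\C$ and performing the compatible octahedral/homotopy-pushout construction, or equivalently by invoking the Serre property of $\N$ in $(\C,\mathbb{E}_\N,\mathfrak{s}_\N)$ from Theorem \ref{cor_exact}, so as to force the two relative conditions {\rm (Lex)} and {\rm (Rex)} to hold simultaneously; the delicate point is precisely this simultaneous bookkeeping, the rest being formal once it is arranged.

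Once $e$, and hence $f$, is exhibited as an isomorphism composed with an $\mathfrak{s}_\N$-inflation, Lemma \ref{lem_inf}(1) immediately shows that $Q(f)$ is an $\widetilde{\mathfrak{s}}_\N$-inflation. (An alternative I would keep in reserve: if the exact structure $\widetilde{\mathbb{E}}_\N$ on the category $\widetilde{\C}_\N$ were already known to be the maximal one, then the monomorphism $Q(f)$ is automatically an inflation; but I would not rely on this, as that maximality is itself part of what Corollary \ref{cor_abelian} has to establish.)
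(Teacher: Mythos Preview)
Your proposal has a genuine gap that you yourself flag: the reduction you perform is circular and the ``technical heart'' is never supplied. Resolving the cone $N[1]$ via $\cone(\N,\N)=\C$ and pulling back along $c\colon N_1\to N[1]$ correctly gives $Q(f)\cong Q(e)$ with $e\colon E\to B$ sitting in a triangle $N_2\to E\overset{e}{\to}B\to N_2[1]$, but $e$ has exactly the same shape as $f$: its cone $N_2[1]$ lies in $\N[1]$, not in $\N$, so your observation about morphisms with cone in $\N$ (which is just Lemma~\ref{lem_Sn_from_extension-closed}(1)) does not apply to it, and iterating the construction reproduces the same situation. The suggestions at the end---resolve $B$, or invoke the Serre property---are not worked out, and it is not clear they break the loop.

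The idea you are missing, and which the paper supplies, is to resolve the \emph{source} $A$ rather than the cone. From $\cocone(\N,\N)=\C$ one obtains a triangle $A\to N'_1\to N'_2\overset{z'}{\to}A[1]$ with $N'_1,N'_2\in\N$. Two octahedral/pullback diagrams then produce a retraction $b\colon B'\to B$ (hence $b\in\Sn$) and a triangle $A\overset{f''}{\to}B'\to C\overset{h''}{\to}A[1]$ whose connecting morphism satisfies \emph{both} $\overline{h''[-1]}=0$ (since $h''=h\circ c$ with $c\colon C\to N[1]$, so $h''[-1]$ factors through $N\in\N$) and $\overline{h''}=0$ (since $h''=z'\circ h'$ factors through $N'_2\in\N$). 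These two vanishings already force $h''\in\mathbb{E}_\N$: if $h''$ factors through $\N[1]$ then so does every precomposition, giving (Lex); if $h''[-1]$ factors through $\N[-1]$ then so does every postcomposition, giving (Rex). Thus $f''$ is an honest $\mathfrak{s}_\N$-inflation with $Q(f)\cong Q(f'')$. The crucial asymmetry is that resolving $A$ is what manufactures the map $z'\colon N'_2\to A[1]$ through which $h''$ factors, yielding the second vanishing; your construction only ever yields the first.
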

\begin{proof}
By the assumption, as for the object $A$, there exists a triangle $A\overset{x'}{\lra} N'_1\overset{y'}{\lra} N'_2\overset{z'}{\lra} A[1]$ with $N'_1,N'_2\in\N$.
Applying the octahedral axiom to the composed morphism $f\circ z'[-1]$, we have the following commutative diagram made of triangles.
\[
\xymatrix{
&&N'_1\ar@{=}[r]\ar[d]&N'_1\ar[d]^{y'}\\
N'_2[-1]\ar[r]^{f'}\ar[d]_{z'[-1]}&B\ar[r]^{g'}\ar@{=}[d]&C\ar[d]^c\ar[r]^{h'}&N'_2\ar[d]^{z'}\\
A\ar[r]^f&B\ar[r]^g&N[1]\ar[d]_{-x'[1]\circ h}\ar[r]^h&A[1]\ar[d]^{-x'[1]}\ar@{}[lu]|{\rm (wPB)}\\
&&N'_1[1]\ar@{=}[r]&N'_1[1]
}
\]
Again we take a homotopy pullback of $g$ along $c$ to obtain the following commutative diagram made of triangles.
\begin{equation}\label{diag_abelian2}
\xymatrix{
&N'_1\ar@{=}[r]\ar[d]&N'_1\ar[d]&\\
A\ar[r]^{f''}\ar@{=}[d]&B'\ar[r]^{g''}\ar[d]^b&C\ar[d]^c\ar[r]^{h''}&A[1]\ar@{=}[d]\\
A\ar[r]^f&B\ar[r]^g\ar[d]&N[1]\ar@{}[lu]|{\rm (wPB)}\ar[d]^{-x'[1]\circ h}\ar[r]^h&A[1]\\
&N'_1[1]\ar@{=}[r]&N'_1[1]&
}
\end{equation}
The morphism $b$ is a retraction as $g$ factors through $c$, which guarantees $b\in\Sn$.
Hence we have an isomorphism $Q(f)\cong Q(f'')$.
Note that $f''$ is an $\mathfrak{s}_\N$-inflation.
In fact, $h''[-1]=(h\circ c)[-1]$ factors through $N\in\N$ and $h''=h\circ c=z'\circ h'$ factors through $N'_2\in\N$.
Therefore $Q(f)$ turns out to be an $\widetilde{\mathfrak{s}}_\N$-inflation.
\end{proof}

\begin{claim}\label{claim_abelian1}
If $\cone(\N,\N)=\C$ holds, then any morphism $\alpha\colon A\to B$ in $\widetilde{\C}_\N$ can be factorized as an epimorphism $\alpha_1$ followed by a monomorphism $\alpha_2$, namely, $\alpha=\alpha_2\circ\alpha_1$.
\end{claim}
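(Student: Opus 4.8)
The plan is to reduce to an honest morphism of $\C$ and then read off the factorization from a homotopy pullback against a ``resolution'' of the cone. First I would use the description of morphisms in $\widetilde{\C}_\N$ recalled after Proposition~\ref{prop_localization}: write $\alpha=Q(g)\circ Q(t)^{-1}$ with $g\in\C(A',B)$ and $t\in\Sn(A',A)$. Since $Q(t)$ is an isomorphism and epimorphisms are stable under precomposition with isomorphisms, it suffices to factor $Q(g)$ as an epimorphism followed by a monomorphism; so I may assume $\alpha=Q(g)$ for a genuine morphism $g\colon A'\to B$ in $\C$.

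Next, I would complete $g$ to a triangle $A'\overset{g}{\lra}B\overset{g'}{\lra}C\overset{h}{\lra}A'[1]$ in $\C$ and invoke the hypothesis $\cone(\N,\N)=\C$ to fix a triangle $N_1\lra N_0\overset{u}{\lra}C\lra N_1[1]$ with $N_0,N_1\in\N$. Taking a homotopy pullback of $g'$ along $u$ produces an object $P$ and a morphism of triangles
\[
\xymatrix{
A'\ar[r]^{\widetilde{g}}\ar@{=}[d]&P\ar[r]^{\pi}\ar[d]^{b}&N_0\ar[r]\ar[d]^{u}&A'[1]\ar@{=}[d]\\
A'\ar[r]^{g}&B\ar[r]^{g'}&C\ar[r]^{h}&A'[1]
}
\]
whose complementary side triangle reads $N_1\lra P\overset{b}{\lra}B\lra N_1[1]$; in particular $b\circ\widetilde{g}=g$, so $Q(g)=Q(b)\circ Q(\widetilde{g})$.

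It then remains to see, via Proposition~\ref{prop_saturated}, that both factors are of the required type. Applying that proposition to the triangle $N_1\lra P\overset{b}{\lra}B\lra N_1[1]$: its first morphism $N_1\to P$ factors through $N_1\in\N$, so $Q(b)$ is a monomorphism. Rotating the triangle $A'\overset{\widetilde{g}}{\lra}P\overset{\pi}{\lra}N_0\lra A'[1]$ so that $\widetilde{g}$ becomes its middle morphism, the corresponding third morphism is $\pi\colon P\to N_0$, which factors through $N_0\in\N$, so Proposition~\ref{prop_saturated} again yields that $Q(\widetilde{g})$ is an epimorphism. Hence $\alpha=Q(b)\circ\bigl(Q(\widetilde{g})\circ Q(t)^{-1}\bigr)$ is the desired factorization, with $\alpha_2:=Q(b)$ a monomorphism and $\alpha_1:=Q(\widetilde{g})\circ Q(t)^{-1}$ an epimorphism.

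The only genuinely delicate point is the middle step: verifying that the homotopy pullback really does furnish the displayed morphism of triangles together with the side triangle $N_1\to P\to B\to N_1[1]$ and the equality $b\circ\widetilde{g}=g$, which is where the octahedral axiom enters; everything else is a direct appeal to Proposition~\ref{prop_saturated}. I also note that, if one wants the stronger conclusion, Claim~\ref{claim_abelian2} applied to $N_1\to P\overset{b}{\to}B\to N_1[1]$ and its dual applied to $A'\overset{\widetilde{g}}{\to}P\overset{\pi}{\to}N_0\to A'[1]$ upgrade $\alpha_2$ to an $\widetilde{\mathfrak{s}}_\N$-inflation and $\alpha_1$ to an $\widetilde{\mathfrak{s}}_\N$-deflation.
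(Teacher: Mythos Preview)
Your argument is correct and follows essentially the same route as the paper: reduce to an actual morphism of $\C$ via the calculus of fractions, complete to a triangle, resolve the cone using $\cone(\N,\N)=\C$, take a homotopy pullback (equivalently, apply the octahedral axiom), and then read off the epi--mono factorization from Proposition~\ref{prop_saturated}. The only cosmetic difference is that the paper writes $\alpha$ as a left fraction $Q(s)^{-1}\circ Q(f)$ and absorbs the isomorphism into the monic factor, whereas you use a right fraction and absorb it into the epic factor; your closing remark about upgrading via Claim~\ref{claim_abelian2} is exactly how the paper proceeds in the subsequent proof of Corollary~\ref{cor_abelian}(1).
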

\begin{proof}
Let $\alpha:A\to B$ be an morphism in $\widetilde{\C}_\N$.
Since $\overline{\Sn}$ is a multiplicative system in $\overline{\C}$,  it is represented by morphisms $A\xto{f}B'\xleftarrow{s}B$ with $s\in\Sn$, namely, $\alpha=Q(s)^{-1}\circ Q(f)$.
We embed the morphism $f$ into a triangle $A\overset{f}{\lra}B'\overset{g}{\lra}C\overset{h}{\lra}A[1]$ and, by $C\in\cone(\N,\N)$, get a triangle $N_2\overset{y}{\lra} N_1\overset{x}{\lra} C\lra N[1]$ with $N_1,N_2\in\N$.
Applying the octahedral axiom to the composed morphism $h\circ x$, we have the following commutative diagram
\begin{equation}\label{diag_abelian1}
\xymatrix{
&N_2\ar@{=}[r]\ar[d]&N_2\ar[d]^y&\\
A\ar[r]^{f'}\ar@{=}[d]&B''\ar[r]^{g'}\ar[d]^b&N_1\ar[d]^x\ar[r]^{h\circ x}&A[1]\ar@{=}[d]\\
A\ar[r]^f&B'\ar[r]^g&C\ar[r]^h\ar@{}[ul]|{\rm (wPB)}&A[1]
}
\end{equation}
Thus we get a desired factorization $f=b\circ f'$.
In fact, Proposition \ref{prop_saturated} shows that $Q(f')$ is epic and $Q(b)$ is monic in $\widetilde{\C}_\N$.
\end{proof}

Combining Claims \ref{claim_abelian2} and \ref{claim_abelian1}, we can prove Corollary \ref{cor_abelian}(1).

\begin{proof}[Proof of Corollary \ref{cor_abelian}(1)]
Theorem \ref{cor_exact} (ii) $\Rightarrow$ (i) shows that $\widetilde{\C}_\N$ corresponds to an exact category.
Hence it is enough to show that any morphism $\alpha\in\Mor\widetilde{\C}_\N$ is admissible in the sense that $\alpha=m\circ e$ for some $\widetilde{\mathfrak{s}}_\N$-deflation $e$ and $\widetilde{\mathfrak{s}}_\N$-inflation $m$.
Let $\alpha\colon A\to B$ be a morphism in $\widetilde{\C}_\N$.
By using the diagram (\ref{diag_abelian1}) in the proof of Claim \ref{claim_abelian1}, we may assume that there exists a factorization $\alpha=Q(b)\circ Q(f')$ with $\cone(f'),\cocone(b)\in\N$.
Due to Claim \ref{claim_abelian2} and its dual, $Q(f')$ and $Q(b)$ are $\widetilde{\mathfrak{s}}_\N$-deflation and $\widetilde{\mathfrak{s}}_\N$-inflation, respectively.
Therefore the exact structure on $\widetilde{\C}_\N$ is abelian.
\end{proof}

To show the assertion (2) in Corollary \ref{cor_abelian}, we need the following claim.

\begin{claim}\label{claim_cohomological1}
Assume that $\cone(\N,\N)=\C$ holds in the triangulated category $(\C,\mathbb{E},\mathfrak{s})$.
Let $N[-1]\overset{f}{\lra} B\overset{g}{\lra} C\overset{h}{\lra} N$ be a triangle in $\C$ with $N\in\N$.
Then, the induces sequence $QN[-1]\xto{Qf}QB\xto{Qg}QC\to 0$ forms an exact sequence in the abelian category $\widetilde{\C}_\N$.
\end{claim}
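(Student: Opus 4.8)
The plan is to verify exactness of the sequence at $QC$ and at $QB$ separately, using that $\widetilde{\C}_\N$ is abelian by Corollary~\ref{cor_abelian}(1). Exactness at $QC$ is immediate: the connecting morphism $h\colon C\to N$ of the given triangle has codomain $N\in\N$, hence factors through an object of $\N$, so Proposition~\ref{prop_saturated} shows that $Q(g)$ is an epimorphism. Since also $Q(g)\circ Q(f)=Q(g\circ f)=Q(0)=0$, we get $\Im Q(f)\subseteq\Ker Q(g)$, and it remains only to prove the reverse inclusion $\Ker Q(g)\subseteq\Im Q(f)$.

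For this I would first factor $f$ exactly as in the proof of Claim~\ref{claim_abelian1}. Using $\cone(\N,\N)=\C$, fix a triangle $N_2\xrightarrow{y}N_1\xrightarrow{x}C\xrightarrow{z}N_2[1]$ with $N_1,N_2\in\N$ and apply the octahedral axiom to $h\circ x\colon N_1\to N$. This produces a triangle $N[-1]\xrightarrow{f''}B''\xrightarrow{g''}N_1\xrightarrow{h\circ x}N$ (so $\cone(f'')=N_1\in\N$), a triangle $N_2\xrightarrow{m}B''\xrightarrow{b}B\xrightarrow{c}N_2[1]$ (so $\cocone(b)=N_2\in\N$), and morphisms with $f=b\circ f''$ and $g\circ b=x\circ g''$, in which the square on $B'',N_1,B,C$ is a weak pullback of $g$ along $x$; the octahedral diagram furnishes in addition a morphism of triangles from $N_2\xrightarrow{m}B''\xrightarrow{b}B\xrightarrow{c}N_2[1]$ to $N_2\xrightarrow{y}N_1\xrightarrow{x}C\xrightarrow{z}N_2[1]$ with middle components $g''$ and $g$, whence $c=z\circ g$. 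By Proposition~\ref{prop_saturated}, $Q(f'')$ is an epimorphism and $Q(b)$ is a monomorphism, while $Q(g'')=0$ because $g''$ has codomain in $\N$ (Corollary~\ref{cor_phantom}); consequently $Q(f)=Q(b)\circ Q(f'')$ is an epi--mono factorization, so $\Im Q(f)=\Im Q(b)$ and $Q(g)\circ Q(b)=Q(x)\circ Q(g'')=0$. Thus the problem reduces to showing $\Im Q(b)=\Ker Q(g)$.

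Now I would invoke Claim~\ref{claim_abelian2} for the triangle $B''\xrightarrow{b}B\xrightarrow{c}N_2[1]\to B''[1]$, whose third vertex $N_2[1]$ has $N_2\in\N$: it gives that $Q(b)$ is an $\widetilde{\mathfrak{s}}_\N$-inflation, and its proof (resolving $B''$ via $\cone(\N,\N)$) moreover presents $\Cok Q(b)$ as $QC_{\ast}$ for some $C_{\ast}\in\C$ carrying a morphism $c_{\ast}\colon C_{\ast}\to N_2[1]$ with $\cocone(c_{\ast})\in\N$, the morphism induced by $c$ on $\Cok Q(b)$ being $Q(c_{\ast})$ under this identification. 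Hence $Q(c_{\ast})$ is a monomorphism by Proposition~\ref{prop_saturated}, so $\Ker Q(c)=\Ker\bigl(Q(c_{\ast})\circ\pi\bigr)=\Ker\pi=\Im Q(b)$, where $\pi\colon QB\to\Cok Q(b)$ is the canonical epimorphism. Finally, $c=z\circ g$ gives $Q(c)=Q(z)\circ Q(g)$, so $\Ker Q(g)\subseteq\Ker Q(c)=\Im Q(b)\subseteq\Ker Q(g)$; therefore $\Im Q(f)=\Im Q(b)=\Ker Q(g)$ and the sequence is exact at $QB$.

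The step I expect to be the main obstacle is the third one: reading off from the construction in the proof of Claim~\ref{claim_abelian2} the precise description of $\Cok Q(b)$ together with the fact that the induced morphism $\Cok Q(b)\to Q(N_2[1])$ is monic. Equivalently, one may package this step as ``the dual of the present claim'', which---unlike the claim itself---can be carried out directly, since $Q(b)$ is already known to be an $\widetilde{\mathfrak{s}}_\N$-inflation, so running it here introduces no circularity. The other point requiring care is the identity $c=z\circ g$, which has to be extracted from the weak pullback appearing in the octahedral diagram; once it is in hand, the remaining arguments are routine in the abelian category $\widetilde{\C}_\N$.
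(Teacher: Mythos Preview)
Your argument is correct, and it shares the opening move with the paper: both apply the octahedral axiom to $h\circ x$ (for a chosen resolution $N_2\xrightarrow{y}N_1\xrightarrow{x}C\xrightarrow{z}N_2[1]$ with $N_i\in\N$) to factor $f=b\circ f''$ with $\cone(f'')=N_1\in\N$ and $\cocone(b)=N_2\in\N$, whence $\Im Q(f)=\Im Q(b)$ and $Q(g)\circ Q(b)=0$. The divergence is only in how one establishes $\Im Q(b)=\Ker Q(g)$.

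The paper does this in one stroke, without invoking Claim~\ref{claim_abelian2}: the weak pullback square in the octahedral diagram already yields a triangle
\[
A_0\xrightarrow{\binom{-f_0}{g'}}B\oplus N_1\xrightarrow{(g\ \ x)}C\dashrightarrow
\]
(in your notation $A_0=B''$, $f_0=b$), and one verifies directly that this is an $\mathfrak{s}_\N$-conflation: its connecting morphism is $f'[1]\circ h$, which factors through $N\in\N$, and $\overline{\binom{-f_0}{g'}}=\overline{f_0}$ is monic in $\overline{\C}$ since $\cocone(f_0)=N_2\in\N$. Applying the exact functor $Q$ and using $Q(N_1)=0$ then gives the short exact sequence $0\to QA_0\xrightarrow{Qf_0}QB\xrightarrow{Qg}QC\to 0$ immediately.

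Your route instead applies Claim~\ref{claim_abelian2} to the triangle $B''\to B\to N_2[1]$ and then mines its proof for the identification $\Cok Q(b)\cong QC_{\ast}$ together with the monicity of the induced map $Q(c_{\ast})$; combined with the octahedral identity $c=z\circ g$ you close the chain $\Ker Q(g)\subseteq\Ker Q(c)=\Im Q(b)\subseteq\Ker Q(g)$. This is valid, but it relies on internal details of another proof rather than just its statement, and is visibly more elaborate than the paper's single exactness check on the Mayer--Vietoris triangle. The alternative ``dual of the present claim'' packaging you mention is best avoided in the write-up: while not circular once unwound (the needed input is supplied by Claim~\ref{claim_abelian2}), phrasing it that way invites confusion.
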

\begin{proof}
Let $N[-1]\overset{f}{\lra} B\overset{g}{\lra} C\overset{h}{\lra} N$ be a triangle with $N\in\N$.
By the condition $\cone(\N,\N)=\C$, the object $C$ admits a triangle $N_2\xto{y}N_1\xto{x}C\xto{g'}N_2[1]$ with $N_1,N_2\in\N$.
The octahedral axiom gives us the following commutative diagram made of triangles.
\begin{equation*}\label{diag_abelian3}
\xymatrix{
&N_2\ar@{=}[r]\ar[d]&N_2\ar[d]^y&\\
N[-1]\ar[r]^{f'}\ar@{=}[d]&A_0\ar[r]^{g'}\ar[d]^{f_0}&N_1\ar[d]^x\ar[r]^{}&N\ar@{=}[d]\\
N[-1]\ar[r]^f&B\ar[r]^g\ar[d]^{g_0}&C\ar[d]^{g'}\ar[r]^h\ar@{}[ul]|{\rm (wPB)}&N\\
&N_2[1]\ar@{=}[r]&N_2[1]&
}
\end{equation*}
Note that the square $({\rm wPB})$ corresponds to an $\mathfrak{s}_\N$-triangle $A_0\overset{\binom{-f_0}{g'}}{\lra}B\oplus N_1\overset{(g\, x)}{\lra}C\overset{}{\dashrightarrow}$.
In fact, the morphism $f'[1]\circ h\colon C\to A_0[1]$ obviously factors through $N\in\N$ and $\overline{f_0}\cong\overline{\binom{-f_0}{g'}}$ is monic in $\overline{\C}$.
Therefore we get a short exact sequence $0\to QA_0\xto{Qf_0}QB\xto{Qg}QC\to 0$ in $\widetilde{\C}_\N$.
Since $f=f_0\circ f'$ and $Qf'$ is epic, we have a desired exact sequence $QN[-1]\xto{Qf}QB\xto{Qg}QC\to 0$.
\end{proof}

Now we complete the proof of Corollary \ref{cor_abelian}.

\begin{proof}[Proof of Corollary \ref{cor_abelian}(2)(3)]
(2) Let $A\overset{f}{\lra} B\overset{g}{\lra} C\overset{h}{\lra} A[1]$ be a triangle in $(\C,\mathbb{E},\mathfrak{s})$.
We have to show that the induced sequence $QA\xto{Qf}QB\xto{Qg}QC$ is exact in $\widetilde{\C}_\N$, namely, $\Im Qf=\Ker Qg$.
First, using the condition $\cone(\N,\N)=\C$, we take a triangle $ N'\overset{y}{\lra} N\overset{x}{\lra} A[1]\overset{z}{\lra}N'[1]$ with $N,N'\in\N$.
Also, by the octahedral axiom, we have the following commutative diagram made of triangles.
\[
\xymatrix{
&&N'\ar[d]\ar@{=}[r]&N'\ar[d]^y&\\
N[-1]\ar[d]_{x[-1]}\ar[r]^{f'}&B\ar@{=}[d]\ar[r]^{g'}&C'\ar[d]_c\ar[r]^{h'}&N\ar[d]^x\\
A\ar[r]^f&B\ar[r]^g&C\ar[d]\ar[r]^h&A[1]\ar@{}[ul]|{\rm (wPB)}\ar[d]^z\\
&&N'[1\ar@{=}[r]]&N'[1]&
}
\]
Due to Proposition \ref{prop_saturated}, we have that $Q(x[-1])$ is epic and $Qc$ is monic.
Thus, we get equalities $\Im Qf=\Im Q(f\circ x[-1])=\Im Qf'$ and $\Ker Qg=\Ker Q(c\circ g')=\Ker Qg'$.
Since Claim \ref{claim_cohomological1} shows $\Im Qf'=\Ker Qg'$,
we also have $\Im Qf=\Ker Qg$ which guarantees the functor $Q$ is cohomological.

(3) A given cohomological functor $H\colon (\C,\mathbb{E},\mathfrak{s})\to \A$ restricts to an exact functor $(H,\phi)\colon (\C,\mathbb{E}_\N,\mathfrak{s}_\N)\to \A$.
Actually, thanks to Lemma \ref{lem_exact}, we know that any $\mathfrak{s}_\N$-triangle $A\overset{f}{\lra} B\overset{g}{\lra} C\overset{h}{\dra}$ is an $\mathfrak{s}$-triangle with $h,h[-1]\in [\N]$.
Thus, applying $H$ to this yields an (short) exact sequence $0\to HA\xto{Hf}HB\xto{Hg}HC\to 0$ in $\A$.
Note that $\A$ has a natural extriangulated structure corresponding to the abelian category.
So the cohomological functor $H$ gives rise to an exact functor of the form $(\C, \mathbb{E}_\N, \mathfrak{s}_\N) \to \A$, see \cite[Prop. 3.3]{Sak21} for details.
Similarly, we have an isomorphism $Hs\in\Mor\A$ for any $s\in\Sn$ by Proposition \ref{prop_description_of_Sn}.
By the universality of $(Q,\mu)$ in Theorem \ref{Thm_Mult_Loc}(2), there uniquely exists an exact functor $(H',\phi')\colon (\widetilde{\C}_\N,\widetilde{\mathbb{E}}_\N,\widetilde{\mathfrak{s}}_\N)\to \A$ with $(H,\phi)=(H',\phi')\circ (Q,\mu)$.
Since $\widetilde{\C}_\N$ is abelian, the additive functor $H\colon \widetilde{\C}_\N\to\A$ is exact in the usual sense.
\end{proof}

The following is straightforward by the proof of Corollary~\ref{cor_abelian}(2).

\begin{corollary}\label{cor_left_right_exact}
The exact functor $(Q,\mu)\colon (\C,\mathbb{E}_\N,\mathfrak{s}_\N)\to(\widetilde{\C}_\N,\widetilde{\mathbb{E}}_\N,\widetilde{\mathfrak{s}}_\N)$ induces a left exact functor $Q\colon (\C,\mathbb{E}^L_\N,\mathfrak{s}^L_\N)\to\widetilde{\C}_\N$ in the sense of \cite{Oga21}, namely, $Q$ sends any $\mathfrak{s}^L_\N$-triangle $A\overset{f}{\lra}B\overset{g}{\lra}C\dra$ to a left exact sequence $0\lra QA\overset{Qf}{\lra}QB\overset{Qg}{\lra}QC$ in $\widetilde{\C}_\N$.
Similarly, we have a right exact functor $Q\colon (\C,\mathbb{E}^R_\N,\mathfrak{s}^R_\N)\to\widetilde{\C}_\N$.
\end{corollary}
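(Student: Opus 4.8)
The plan is to reduce everything to two facts already established under the running hypothesis $\cone(\N,\N)=\C$ (the set-up of Corollary~\ref{cor_abelian}, which this corollary inherits): namely that $\widetilde{\C}_\N$ is abelian and $Q\colon(\C,\mathbb{E},\mathfrak{s})\to\widetilde{\C}_\N$ is cohomological, and that the connecting morphism of an $\mathfrak{s}^L_\N$-conflation, viewed in the ambient triangulated category $\C$, satisfies {\rm (Lex)}. Concretely, I would take an $\mathfrak{s}^L_\N$-conflation $A\overset{f}{\lra}B\overset{g}{\lra}C$ and complete it to a triangle $A\overset{f}{\lra}B\overset{g}{\lra}C\overset{h}{\lra}A[1]$ of $\C$ with $h\in\mathbb{E}^L_\N(C,A)$. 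Applying the cohomological functor $Q$ of Corollary~\ref{cor_abelian}(2) to this triangle already gives exactness of $QA\overset{Qf}{\lra}QB\overset{Qg}{\lra}QC$ at the middle term, that is, $\Im Qf=\Ker Qg$ in $\widetilde{\C}_\N$.

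It remains to see that $Qf$ is a monomorphism, and for this I would re-run the first half of the proof of Lemma~\ref{lem_exact}. Pick a triangle $N'\lra N\overset{c}{\lra}C\lra N'[1]$ with $N,N'\in\N$ (available since $\cone(\N,\N)=\C$) and form the homotopy pullback of $g$ along $c$; the connecting morphism of the resulting upper row is $(h\circ c)[-1]\colon N[-1]\to A$. The only property of $h$ that the argument then invokes is that $h\circ c$ factors through an object of $\N[1]$, and this is precisely {\rm (Lex)} applied to $c\colon N\to C$ — so it holds from $h\in\mathbb{E}^L_\N(C,A)$ alone, with no appeal to {\rm (Rex)}. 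The remainder of that argument produces a factorization of $h[-1]$ through an object of $\N$, whence $Q(f)$ is monic by Proposition~\ref{prop_saturated}. Since $\widetilde{\C}_\N$ is abelian, ``$Qf$ monic'' together with ``$\Im Qf=\Ker Qg$'' says exactly that $Qf$ is a kernel of $Qg$, i.e.\ that $0\lra QA\overset{Qf}{\lra}QB\overset{Qg}{\lra}QC$ is left exact in the sense of \cite{Oga21}. The statement for $(\C,\mathbb{E}^R_\N,\mathfrak{s}^R_\N)$ then follows by the formal dual, using {\rm (Rex)} in place of {\rm (Lex)} to conclude that $Qg$ is an epimorphism.

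I do not anticipate a genuine obstacle; the one point requiring care — and the reason the corollary is only ``straightforward'' rather than trivial — is the bookkeeping in the second paragraph, namely that the ``$Q(f)$ is monic'' half of Lemma~\ref{lem_exact} uses only the left-hand condition {\rm (Lex)} and not the full $\mathbb{E}_\N=\mathbb{E}^L_\N\cap\mathbb{E}^R_\N$, together with matching the notion of left exact sequence of \cite{Oga21} with ``mono and image equals kernel''. Once those are checked the argument closes.
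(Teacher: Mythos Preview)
Your proposal is correct and matches the paper's intent: the paper simply states that the result ``is straightforward by the proof of Corollary~\ref{cor_abelian}'', and your unpacking of this --- using the cohomological property of $Q$ for exactness at the middle term and isolating the {\rm (Lex)}-only half of the argument in Lemma~\ref{lem_exact} to obtain monicity of $Qf$ --- is exactly the content behind that remark. Your observation that the first half of Lemma~\ref{lem_exact} invokes only {\rm (Lex)} (via the pullback along a morphism $N\to C$ with $N\in\N$) is the key point, and you have identified it correctly.
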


By the argument so far, we established half/left/right exact functors $Q\colon \C\to\widetilde{\C}_\N$ from an extension closed subcategory $\N$ with $\cone(\N,\N)=\C$ as depicted in the following commutative diagram,
\begin{equation*}\label{diag:one_sided_localization}
\xy
(0,0)*+{(\C,\mathbb{E},\mathfrak{s})}="0";
(20,14)*+{(\C,\mathbb{E}^R_\N,\mathfrak{s}^R_\N)}="2";
(20,-14)*+{(\C,\mathbb{E}^L_\N,\mathfrak{s}^L_\N)}="4";
(40,0)*+{(\C,\mathbb{E}_\N,\mathfrak{s}_\N)}="6";
(72,0)*+{\widetilde{\C}_\N}="8";
{\ar "2";"0"};
{\ar "4";"0"};
{\ar "6";"2"};
{\ar "6";"4"};
{\ar@/^24pt/^{\textnormal{right exact}} "2";"8"};
{\ar@/_24pt/_{\textnormal{left exact}} "4";"8"};
{\ar "6";"8"};
\endxy
\end{equation*}
where non-labeled arrows are exact functors.
The arrows in the left square are identities and the other arrows denote the functor $Q$ (as additive functors).

The following example shows that left/right exact functor in Corollary \ref{cor_left_right_exact} can be obtained from rigid subcategories, see Example \ref{ex_relative_str_rigid}.

\begin{example}\label{ex_relative_str_rigid2}
Let $\X$ be a rigid contravariantly finite subcategory of $\C$ and put $\N:=\{C\in\C\mid (\X,C)=0\}$.
Then we have an equality
\[
\mathbb{E}^R_\N(C,A)=\{h\in\mathbb{E}(C,A)\mid h\circ y=0\text{\ for all\ }y\colon X\to C\text{\ with\ }X\in\X\}.
\]
Moreover, the functor $(\X,-)\colon (\C,\mathbb{E}^R_\N,\mathfrak{s}^R_\N)\to\mod\X$ is right exact.
\end{example}
\begin{proof}
For any object $C\in\C$, a right $\X$-approximation $f\colon X\to C$ of $C$ yields a triangle $X\overset{f}{\lra}C\overset{g}{\lra}N\overset{h}{\lra}X[1]$ with $N\in\N$.
Since $X[1]\in\N$, we get $\cone(\N,\N)=\C$.
Thus, Lemma~\ref{lem_exact} tells us the equality is true.
The right-exactness of $(\X,-)$ follows from Corollary \ref{cor_left_right_exact}.
\end{proof}

\subsection{Ideal quotients by rigid subcategories}\label{subsec_rigid}
We shall discuss on the case that $\N$ is a rigid subcategory.
In this case, we shall show that the extriangulated localization $\widetilde{\C}_\N$ is equivalent to the ideal quotient $\overline{\C}:=\C/[\N]$.
The following characterizations are straightforward.

\begin{lemma}\label{lem_relative_str_rigid3}
If a given subcategory $\N$ is rigid, then we have the following equalities.
\begin{eqnarray*}
\mathbb{E}^L_\N(C,A) &=& \{h\in\mathbb{E}(C,A)\mid h\circ y=0\text{\ for all\ }y:N\to C\text{\ with\ }N\in\N\}\\
\mathbb{E}^R_\N(C,A) &=& \{h\in\mathbb{E}(C,A)\mid x\circ h[-1]=0\text{\ for all\ }x:A\to N\text{\ with\ }N\in\N\}
\end{eqnarray*}
\end{lemma}

Recall that the quotient functor $\C\to\widetilde{\C}_\N$ is factored as the ideal quotient $p:\C\to\overline{\C}$ followed by the localization $q\colon \overline{\C}\to\widetilde{\C}_\N$ with respect to $\overline{\Sn}$.
The rigidity of $\N$ makes the functor $q$ to be an equivalence.

\begin{corollary}\label{cor_localization_rigid}
If $\N$ is rigid, we have an equivalence $q\colon \overline{\C}\xto{\sim}\widetilde{\C}_\N$.
In particular, the ideal quotient $p\colon \C\to\overline{\C}$ gives rise to an exact functor $(p,\mu)\colon (\C,\mathbb{E}_\N,\mathfrak{s}_\N)\to(\overline{\C},\widetilde{\mathbb{E}}_\N,\widetilde{\mathfrak{s}}_\N)$.
\end{corollary}
\begin{proof}
It suffices to show that $\overline{\Sn}$ is the class of isomorphisms in $\overline{\C}$.
By definition, any morphism $f\in\L$ is embedded in an $\mathfrak{s}_\N$-triangle $A\overset{f}{\lra}B\overset{g}{\lra}N\overset{h}{\lra}A[1]$ with $N\in\N$.
The characterization in Lemma \ref{lem_relative_str_rigid3} forces $h=0$.
Thus, $f$ is a section in $\C$ and $\overline{f}$ is an isomorphism in $\overline{\C}$.
The dual argument holds for $\R$.
Since $\Sn$ is the class of finite compositions of morphisms in $\L\cup\R$, we have thus proved the assertion.
\end{proof}

\begin{remark}
Note that, if $\N$ is rigid in $(\C,\mathbb{E},\mathfrak{s})$, any object in $\N$ is projective and injective in $(\C,\mathbb{E}_\N,\mathfrak{s}_\N)$ in the sense that $\mathbb{E}_\N(\N,-)=\mathbb{E}_\N(-,\N)=0$ (see \cite[Def. 3.23]{NP19}).
Thus, due to \cite[Prop. 3.30]{NP19}, the ideal quotient $\overline{\C}$ has a natural extriangulated structure which coincides with $(\overline{\C},\widetilde{\mathbb{E}}_\N,\widetilde{\mathfrak{s}}_\N)$ in Corollary \ref{cor_localization_rigid}.
\end{remark}

Under the functorially finiteness of $\N$, we obtain the following characterization of $\N$ to make the ideal quotient $\overline{\C}=\C/[\N]$ abelian, which goes back to the well-known abelian structures in \cite{KR07, KZ08}.
We should remark that similar results were obtained as \cite[Thm. 27]{GJ15} and \cite[Cor. 1.4]{LZ19} in different contexts.
Recall that $\N$ is called a \emph{cluster-tilting} subcategory if it satisfies the rigidity and $\cone(\N,\N)=\C$.

\begin{corollary}\label{cor_abelian2}
Let $\C$ be a triangulated category and $\N$ a functorially finite rigid subcategory of $\C$.
Then, the ideal quotient $(\overline{\C},\widetilde{\mathbb{E}}_\N,\widetilde{\mathfrak{s}}_\N)$ is abelian if and only if $\N$ is a cluster-tilting subcategory.
\end{corollary}
\begin{proof}
If $\N$ is cluster-tilting, by definition, we get $\cone(\N,\N)=\C$.
Corollaries \ref{cor_abelian} and \ref{cor_localization_rigid} guarantee that $\overline{\C}$ is abelian.
Conversely, if $\overline{\C}$ is abelian, then Theorem \ref{cor_exact} forces $\cone({\N,\N})=\C$.
\end{proof}

\section{Relation to cohomological functors}\label{sec_Relation}
In this section, we shall study how our localization relates to cohomological functors.
Let $\C$ be a triangulated category.
In \cite{Sak21},
it was shown that any cohomological functor $H\colon \C\to \A$ naturally determines an extriangulated structure $(\C,\mathbb{E}_H,\mathfrak{s}_H)$ which makes $H\colon (\C,\mathbb{E}_H,\mathfrak{s}_H)\to\A$ exact.
Here $\A$ stands for an abelian category.
Of particular interest to us is the cases the structure $(\C,\mathbb{E}_H,\mathfrak{s}_H)$ coincides with our structure 
$(\C,\mathbb{E}_\N,\mathfrak{s}_\N)$ for $\N:=\Ker H$.
Such cases include the heart of $t$-structures in $\C$, the abelian quotient of $\C$ by a cluster-tilting subcategory as well as a more general concept, the heart of cotorsion pairs.
Our localization serves us a good understanding of such cohomological functors.
 
\subsection{Relative structures from cohomological functors}
Let us consider a cohomological (covariant) functor $H:\C\to\A$ from a triangulated category $\C$.
Remind that the extriangulated structure on $\C$ corresponding to the triangulated structure is denoted by $(\C,\mathbb{E},\mathfrak{s})$.
First, we recall the following result.

\begin{definition}\cite[Def. 3.2]{Sak21}
A subset $\mathbb{E}^R_H(C,A)$ of $\mathbb{E}(C,A)$ is defined to be a collection of $h$ which induces a triangle $A\overset{f}{\lra} B\overset{g}{\lra} C\overset{h}{\lra} A[1]$ with $H(g)$ epic in $\A$.
Similarly, we define a subset $\mathbb{E}^L_H(C,A)$ of $\mathbb{E}(C,A)$ consisting of $h$ such that $H(f)$ monic in $\A$.
Moreover, we put $\mathbb{E}_H(C,A):=\mathbb{E}^L_H(C,A)\cap\mathbb{E}^R_H(C,A)$.
\end{definition}

\begin{proposition}\cite[Prop. 3.3]{Sak21}
The following hold.
\begin{enumerate}
\item[\textnormal{(1)}]
Both $\mathbb{E}^R_H$ and $\mathbb{E}^L_H$ give rise to closed subfunctors of $\mathbb{E}$, hence we have an extriangulated structure $(\C,\mathbb{E}_H,\mathfrak{s}_H)$ relative to $(\C,\mathbb{E},\mathfrak{s})$, where $\mathfrak{s}_H$ is a restriction of $\mathfrak{s}$ to $\mathbb{E}_H$.
\item[\textnormal{(2)}]
$H$ gives rise to an exact functor $(H,\phi)\colon (\C,\mathbb{E}_H,\mathfrak{s}_H)\to\A$.
\end{enumerate}
\end{proposition}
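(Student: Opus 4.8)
The plan is to check, for each $\mathbb{F}\in\{\mathbb{E}^L_H,\mathbb{E}^R_H,\mathbb{E}_H\}$, the equivalent conditions of Proposition~\ref{prop_relative} (closed subfunctor), and then to construct the natural transformation $\phi$ of part~(2) directly from the data of $H$. I would begin by reformulating the two conditions. If $h\in\mathbb{E}(C,A)=\C(C,A[1])$ is realized by a triangle $A\xrightarrow{f}B\xrightarrow{g}C\xrightarrow{h}A[1]$, then applying the cohomological functor $H$ to this triangle and its rotations yields a long exact sequence
\[
\cdots\to H(C[-1])\xrightarrow{\ \pm H(h[-1])\ }H(A)\xrightarrow{H(f)}H(B)\xrightarrow{H(g)}H(C)\xrightarrow{H(h)}H(A[1])\to\cdots
\]
in $\Ab$. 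Exactness at $H(C)$ shows that $H(g)$ is epic iff $H(h)=0$, and exactness at $H(A)$ shows that $H(f)$ is monic iff $H(h[-1])=0$; thus $\mathbb{E}^R_H(C,A)=\{h\mid H(h)=0\}$ and $\mathbb{E}^L_H(C,A)=\{h\mid H(h[-1])=0\}$. Since $h\mapsto H(h)$ and $h\mapsto H(h[-1])$ are group homomorphisms and the structural maps $\mathbb{E}(c,a)$ act by pre- and post-composition with $c$ and $a[1]$, these subsets are subgroups stable under $\mathbb{E}(c,a)$, so $\mathbb{E}^L_H$, $\mathbb{E}^R_H$ and their intersection $\mathbb{E}_H$ are additive subfunctors of $\mathbb{E}$.

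Next I would prove closedness. By Proposition~\ref{prop_relative} it suffices that the relevant (in)flations compose. A morphism $g$ of $\C$ is an $\mathfrak{s}|_{\mathbb{E}^R_H}$-deflation exactly when $H(g)$ is epic (every morphism being an $\mathfrak{s}$-deflation in the triangulated case), and epimorphisms of $\Ab$ compose; dually $\mathfrak{s}|_{\mathbb{E}^L_H}$-inflations are precisely the $f$ with $H(f)$ monic, and monomorphisms compose. Hence $\mathbb{E}^L_H$ and $\mathbb{E}^R_H$ are closed. For $\mathbb{E}_H$, observe that $f$ is an $\mathfrak{s}|_{\mathbb{E}_H}$-inflation iff $H$ sends the triangle $A\xrightarrow{f}B\xrightarrow{g}\cone(f)\to$ to a short exact sequence $0\to H(A)\to H(B)\to H(\cone f)\to 0$. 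Given composable such $f,f'$, I would apply $H$ to the octahedral diagram built on $f$ and $f'$: the two short exact rows/columns coming from the hypotheses, together with the exactness of $H$ on triangles, force by a routine diagram chase in $\Ab$ (essentially the $3\times 3$-lemma) that $H$ also sends the triangle on $f'\circ f$ to a short exact sequence. Thus $\mathfrak{s}|_{\mathbb{E}_H}$-inflations compose and $\mathbb{E}_H$ is closed, giving the three relative extriangulated structures of (1); this last step genuinely uses the octahedral axiom, since intersections of closed subfunctors need not be closed in general.

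For (2), I would regard $\Ab$ as an extriangulated category via its abelian structure, so that $\mathbb{E}_\Ab=\Ext^1_\Ab(-,-)$ and $\mathfrak{s}_\Ab$ assigns to an extension class its Yoneda short exact sequence. For $h\in\mathbb{E}_H(C,A)$ with triangle $A\xrightarrow{f}B\xrightarrow{g}C\xrightarrow{h}A[1]$, the sequence $0\to H(A)\xrightarrow{H(f)}H(B)\xrightarrow{H(g)}H(C)\to 0$ is short exact by definition of $\mathbb{E}_H$, and I would set $\phi_{C,A}(h)$ to be its class in $\Ext^1_\Ab(H(C),H(A))$. Well-definedness is immediate because $h$ pins down the triangle up to isomorphism (five-lemma for triangles). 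Naturality in $C$ and in $A$ reduces to checking that $H$ carries the homotopy pullback of $g$ along $c$ and the homotopy pushout of $f$ along $a$ to the genuine pullback and pushout of short exact sequences in $\Ab$, which is a short five-lemma argument. Additivity of $\phi_{C,A}$ then follows by realizing $h_1+h_2=\Delta_C^{*}(\nabla_A)_{*}(h_1\oplus h_2)$, using that $H$ preserves biproducts and the pullback/pushout compatibility just noted, and comparing with the Baer sum in $\Ext^1_\Ab$. Finally $\mathfrak{s}_\Ab(\phi_{C,A}(h))=[H(A)\to H(B)\to H(C)]$ holds by the very construction of $\phi$, so $(H,\phi)$ is an exact functor $(\C,\mathbb{E}_H,\mathfrak{s}_H)\to\Ab$.

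I expect the main obstacle to be the single point that recurs throughout: showing that $H$ transforms homotopy pullbacks and homotopy pushouts of $\mathfrak{s}_H$-conflations into honest pullbacks and pushouts of short exact sequences in $\Ab$. This fact underlies both the closedness of $\mathbb{E}_H$ in (1) and the naturality and additivity of $\phi$ in (2); once it is in hand, the remaining verifications are bookkeeping.
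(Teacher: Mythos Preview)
The paper does not supply its own proof of this proposition; it is quoted verbatim from \cite[Prop.~3.3]{Sak21} and used as a black box. Your proposal is a correct self-contained argument along the expected lines: reformulate $\mathbb{E}^R_H$ and $\mathbb{E}^L_H$ as kernels of $h\mapsto H(h)$ and $h\mapsto H(h[-1])$, deduce they are additive subfunctors, and verify closedness via Proposition~\ref{prop_relative} using that epimorphisms (resp.\ monomorphisms) in $\Ab$ compose. The construction of $\phi$ in part~(2) is also standard and correct.

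One inaccuracy worth flagging: you assert that ``intersections of closed subfunctors need not be closed in general'' and therefore run a separate octahedral\,/\,$3\times 3$-lemma argument for $\mathbb{E}_H$. In fact the intersection of two closed subfunctors \emph{is} always closed. The point is that, for a subfunctor $\mathbb{F}\subseteq\mathbb{E}$, a morphism $f$ is an $\mathfrak{s}|_\mathbb{F}$-inflation iff the (unique up to isomorphism) extension class realized by the triangle on $f$ lies in $\mathbb{F}$; hence an $\mathfrak{s}|_{\mathbb{F}_1\cap\mathbb{F}_2}$-inflation is precisely a morphism that is simultaneously an $\mathfrak{s}|_{\mathbb{F}_1}$- and an $\mathfrak{s}|_{\mathbb{F}_2}$-inflation, and closedness of the intersection follows immediately from closedness of each factor. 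The paper itself relies on this without comment: in the proof of Proposition~\ref{prop_relative_str} it concludes ``Hence, $\mathbb{E}_\N$ is also a closed subfunctor'' directly from the closedness of $\mathbb{E}^L_\N$ and $\mathbb{E}^R_\N$. Your direct argument for $\mathbb{E}_H$ is not wrong, merely unnecessary.
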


Put $\N:=\Ker H$ and note that $\N$ is closed under direct summands, isomorphisms and extensions in the triangulated category $(\C,\mathbb{E},\mathfrak{s})$, where we can consider the extriangulated category $(\C,\mathbb{E}_\N,\mathfrak{s}_\N)$.
The following lemma follows from Theorem \ref{cor_exact} and Lemma \ref{lem_exact}.

\begin{lemma}\label{lem_relation_relative_str}
Suppose that $\cone(\N,\N)=\C$ and $\overline{f}=0$ if and only if $H(f)=0$ for any morphism $f\in\Mor\C$.
\begin{enumerate}
\item[\textnormal{(1)}]
The relative structure $(\C,\mathbb{E}_\N,\mathfrak{s}_\N)$ coincides with $(\C,\mathbb{E}_H,\mathfrak{s}_H)$.
\item[\textnormal{(2)}]
The cohomological functor $H$ uniquely factors through the localization $(Q,\mu)\colon (\C,\mathbb{E}_\N,\mathfrak{s}_\N)\to(\widetilde{\C}_\N,\widetilde{\mathbb{E}}_\N,\widetilde{\mathfrak{s}}_\N)$ as depicted below.
\[
\xymatrix{
(\C,\mathbb{E}_\N,\mathfrak{s}_\N)\ar[r]^{(Q,\mu)}\ar[d]_{(H,\phi)}&(\widetilde{\C}_\N,\widetilde{\mathbb{E}}_\N,\widetilde{\mathfrak{s}}_\N)\ar@{..>}[ld]^{(\widetilde{H},\widetilde{\phi})}\\
\A&
}
\]
Furthermore, the induced functor $(\widetilde{H},\widetilde{\phi})$ is a faithful exact functor.
\end{enumerate}
\end{lemma}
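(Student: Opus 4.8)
The plan for (1) is to establish the equality $\mathbb{E}_\N=\mathbb{E}_H$ of closed subfunctors of $\mathbb{E}$; since $\mathfrak{s}_\N$ and $\mathfrak{s}_H$ are both restrictions of $\mathfrak{s}$, the coincidence of the two relative structures then follows at once. For the inclusion $\mathbb{E}_\N\subseteq\mathbb{E}_H$ I would take $h\in\mathbb{E}_\N(C,A)$ realized by a triangle $A\overset{f}{\lra}B\overset{g}{\lra}C\overset{h}{\lra}A[1]$; Lemma~\ref{lem_exact} (this is where the hypothesis $\cone(\N,\N)=\C$ is used) tells us that $h$ and $h[-1]$ both factor through objects of $\N=\Ker H$, so $H(h)=0=H(h[-1])$, and feeding the triangle through the cohomological functor $H$ the resulting long exact sequence forces $H(f)$ monic and $H(g)$ epic, i.e. $h\in\mathbb{E}^L_H(C,A)\cap\mathbb{E}^R_H(C,A)=\mathbb{E}_H(C,A)$. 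For the reverse inclusion, given $h\in\mathbb{E}_H(C,A)$, monicity of $H(f)$ and epicity of $H(g)$ give $H(h[-1])=0$ and $H(h)=0$ in the long exact sequence, and the standing hypothesis then yields $\overline{h[-1]}=0$ and $\overline{h}=0$; writing $h[-1]$ through an object $N'\in\N$ and applying $[1]$ shows $h$ factors through $N'[1]$, from which condition {\rm (Lex)} of Proposition~\ref{prop_relative_str} is immediate, and symmetrically $\overline{h}=0$ gives {\rm (Rex)}, so $h\in\mathbb{E}_\N(C,A)$.

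For (2), once (1) is known, $(H,\phi)$ is an exact functor $(\C,\mathbb{E}_\N,\mathfrak{s}_\N)\to\Ab$ with $\N=\Ker H$, so the universal property in Theorem~\ref{thm_main1} produces a unique exact functor $(\widetilde{H},\widetilde{\phi}):(\widetilde{\C}_\N,\widetilde{\mathbb{E}}_\N,\widetilde{\mathfrak{s}}_\N)\to\Ab$ with $(H,\phi)=(\widetilde{H},\widetilde{\phi})\circ(Q,\mu)$; since $\widetilde{\C}_\N$ is abelian by Corollary~\ref{cor_abelian}, this $\widetilde{H}$ is moreover exact in the ordinary sense by Remark~\ref{Rem_exact_functor}. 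For faithfulness I would represent an arbitrary morphism of $\widetilde{\C}_\N$ as $\alpha=Q(s)^{-1}\circ Q(f)$ with $s\in\Sn$; then $\widetilde{H}(\alpha)=H(s)^{-1}\circ H(f)$, where $H(s)=\widetilde{H}(Q(s))$ is an isomorphism because $Q(s)$ is one in $\widetilde{\C}_\N$. Hence $\widetilde{H}(\alpha)=0$ forces $H(f)=0$, so $\overline{f}=0$ by the hypothesis, so $Q(f)=0$ by Corollary~\ref{cor_phantom}(1), and therefore $\alpha=0$.

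The step I expect to be the main obstacle is the identification $\mathbb{E}_\N=\mathbb{E}_H$ in (1): the conditions {\rm (Lex)}, {\rm (Rex)} defining $\mathbb{E}_\N$ are phrased through factorizations of $h$, $h[-1]$ against morphisms in and out of $\N[\pm1]$, and are a priori weaker than the transparent conditions ``$\overline{h}=0$'' and ``$\overline{h[-1]}=0$'' that (together with the hypothesis and the long exact sequence of $H$) pin down $\mathbb{E}_H$. The genuinely non-formal input closing this gap is Lemma~\ref{lem_exact}, hence the generating hypothesis $\cone(\N,\N)=\C$, which makes every $\mathfrak{s}_\N$-inflation monic and every $\mathfrak{s}_\N$-deflation epic. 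Once (1) is in hand, part (2) is essentially formal, being the universal property of Theorem~\ref{thm_main1} together with the description of zero morphisms in Corollary~\ref{cor_phantom}.
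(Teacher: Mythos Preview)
Your proposal is correct and follows essentially the same approach as the paper's proof; you have simply unpacked in full the argument that the paper compresses into one line (``By the assumption and Lemma~\ref{lem_exact}, we have the assertion (1)''). For part (2) your faithfulness argument via the fraction representation $\alpha=Q(s)^{-1}\circ Q(f)$ is exactly the paper's, the only cosmetic difference being that you justify $H(s)$ invertible via $\widetilde{H}(Q(s))$ while the paper appeals back to (1); both are valid.
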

\begin{proof}
By the assumption and Lemma \ref{lem_exact}, we have the assertion (1).
As for (2), Theorem \ref{cor_exact} shows that there uniquely exists an exact functor $(\widetilde{H},\widetilde{\phi})\colon (\widetilde{\C}_\N,\widetilde{\mathbb{E}}_\N,\widetilde{\mathfrak{s}}_\N)\to\A$ such that $(H,\phi)=(\widetilde{H},\widetilde{\phi})\circ (Q, \mu)$.
To show the faithfulness of $\widetilde{H}$, let $\alpha\colon QA\to QB$ be a morphism in $\widetilde{\C}_\N$ which is represented by morphisms $A\xto{f}Y'\xleftarrow{s}Y$ with $s\in\Sn$ and assume $\widetilde{H}(\alpha)=0$.
Note that, due to (1), $H(s)$ is an isomorphism in $\A$.
Thus we get $\widetilde{H}(\alpha)=H(s)^{-1}\circ H(f)=0$.
By the assumption,  $H(f)=0$ implies $\overline{f}=0$.
Hence we have $Q(f)=0$ and $\alpha=0$.
\end{proof}

\subsection{Heart constructions}
We should recall from \cite{Nak11,AN12} the construction of the heart of cotorsion pair $(\U,\V)$ in a triangulated category $\C$.
Our aim of this subsection is to show that the heart can be regarded as the localization of $\C$ with respect to a certain subcategory $\N$.

A pair $(\U,\V)$ of full subcategories closed under direct summands in $\C$ is called a \emph{cotorsion pair} if the following conditions are fulfilled, which goes back to \cite{Sal79}.
\begin{itemize}
\item[-] $\mathbb{E}(\U,\V)=0$;
\item[-] $\cone(\V,\U)=\C=\cocone(\V,\U)$.
\end{itemize}

\begin{definition}
Let $\C$ be a triangulated category equipped with a cotorsion pair $(\U,\V)$ in $\C$, and put $\W:=\U\cap\V$.
We define the following associated subcategories:
$$
\C^-:=\U[-1]*\W;\ \C^+:=\W*\V[1];\ \H:=\C^+\cap\C^-.
$$
The ideal quotient $\H/[\W]$ is called the \textit{heart} of $(\U,\V)$.
\end{definition}

We should remark that the heart defined above goes back to that of $t$-structure in the sense of \cite{BBD}.
Let $(\C^{\leq -1},\C^{\geq 1})$ be a $t$-structure, namely, a cotorsion pair with $\C^{\leq -1}[1]\subseteq \C^{\leq -1}$.
The (usual) heart of $t$-structure is  defined to be $\C^{\leq -1}[-1]\cap \C^{\leq 1}[1]=\C^{\leq 0}\cap \C^{\leq 0}$.
Since $\W=\C^{\leq -1}\cap \C^{\geq 1}=0$, we have $\C^+=\C^{\leq 0}$ and $\C^-=\C^{\geq 0}$.
Hence, the heart $\H=\C^+\cap\C^-$ of the cotorsion pair coincides with the usual one of the $t$-structure.

We denote by $\pi\colon \C\to\C/[\W]$ the ideal quotient with respect to $\W$.
Abe and Nakaoka showed the following assertions.

\begin{lemma}\label{lem_coreflection}\cite[Def. 3.5, Rem. 3.6]{AN12}
For any $X\in\C$, there exists a commutative diagram
\begin{equation}\label{diag_coreflection}
\xymatrix@R=12pt@C=12pt{
V_X\ar[rr]&&X^-\ar[rr]^{\alpha_X}&&X\ar[rr]\ar[dr]&&V_X[1]\\
&&&&&V'_X\ar[ur]&
}
\end{equation}
where $V_X,V'_X\in\V$, $\alpha_X$ is a right $(\C^-)$-approximation of $X$ and the first row is a triangle.
Such a triangle is called a \emph{coreflection triangle} of $X$.
Dually, there exists a \emph{reflection triangle} of $X$:
\begin{equation}\label{diag_reflection}
\xymatrix@R=12pt@C=12pt{
U_X[-1]\ar[rr]\ar[rd]&&X\ar[rr]^{\beta_X}&&X^+\ar[rr]&&U_X\\
&U'_X\ar[ru]&&&&&
}
\end{equation}
where $U_X,U'_X\in\U$, $\beta_X$ is a left $(\C^+)$-approximation of $X$ and the first row is a triangle.
\end{lemma}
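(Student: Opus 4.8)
The plan is to reconstruct the objects of \cite[\S3]{AN12} explicitly, building each triangle by gluing two successive approximations along a homotopy pullback (resp.\ pushout). I will describe the coreflection triangle; the reflection triangle is then obtained dually. Two consequences of the cotorsion-pair axioms will be used repeatedly. First, $\mathbb{E}(\U,\V)=0$ means $\Hom(U,V[1])=0$ for $U\in\U$, $V\in\V$. Second, since $\cocone(\V,\U)=\C$, every $X$ admits a triangle $U'[-1]\lra X\overset{\beta}{\lra}V'\lra U'$ with $U'\in\U$, $V'\in\V$, and $\beta$ is automatically a left $\V$-approximation of $X$ (any $X\to\overline V$ with $\overline V\in\V$ kills $U'[-1]\to X$ because $\Hom(U'[-1],\overline V)=\mathbb{E}(U',\overline V)=0$); dually, $\cone(\V,\U)=\C$ furnishes right $\U$-approximations whose cocone lies in $\V$. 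I also use the standard fact that $\U$ and $\V$ are closed under extensions.

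To build $X^-$, first take the triangle $U'[-1]\lra X\overset{\beta}{\lra}V'\lra U'$ above, so $\cone(\beta)=U'\in\U$. Then choose a right $\U$-approximation of $V'$, giving a triangle $V''\lra W\overset{\epsilon}{\lra}V'\lra V''[1]$ with $V''\in\V$ and middle term $W\in\U$; since this triangle exhibits $W$ as an extension of $V'\in\V$ by $V''\in\V$, we get $W\in\U\cap\V=\W$. Now form the homotopy pullback of $\beta$ along $\epsilon$ to obtain $X^-$ with maps $\alpha_X\colon X^-\lra X$ and $X^-\lra W$. From the pullback one has $\cone(\alpha_X)\cong\cone(\epsilon)=V''[1]$, hence a triangle $V''\lra X^-\overset{\alpha_X}{\lra}X\lra V''[1]$ whose connecting morphism factors as $X\overset{\beta}{\lra}V'\lra V''[1]$, and $\cone(X^-\to W)\cong\cone(\beta)=U'$, hence a triangle $U'[-1]\lra X^-\lra W\lra U'$, so that $X^-\in\U[-1]*\W=\C^-$. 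Setting $V_X:=V''$ and $V'_X:=V'$ gives exactly the diagram (\ref{diag_coreflection}).

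The one step with actual content is that $\alpha_X$ is a right $\C^-$-approximation. Given $f\colon Y\to X$ with $Y\in\C^-$, write $Y$ in a triangle $U'''[-1]\lra Y\overset{p}{\lra}W'''\lra U'''$ with $U'''\in\U$, $W'''\in\W$. Since $f$ lifts along $\alpha_X$ precisely when the composite $Y\overset{f}{\lra}X\lra V''[1]$ vanishes, and this composite equals $(V'\to V''[1])\circ\beta\circ f$, it is enough to show $\beta f\colon Y\to V'$ factors through $\epsilon$. Precomposing $\beta f$ with $U'''[-1]\to Y$ gives an element of $\Hom(U'''[-1],V')=\Hom(U''',V'[1])=\mathbb{E}(U''',V')=0$, so $\beta f$ factors as $Y\overset{p}{\lra}W'''\overset{g}{\lra}V'$; and since $W'''\in\W\subseteq\U$ and $\epsilon$ is a right $\U$-approximation of $V'$, the map $g$ factors through $\epsilon$. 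Hence $\beta f$ factors through $\epsilon$, the composite $Y\to X\to V''[1]$ is zero, and $f$ factors through $\alpha_X$.

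For the reflection triangle I would run the dual construction: start from a right $\U$-approximation $U'\overset{\gamma}{\lra}X$ with $\cocone(\gamma)\in\V$, take a left $\V$-approximation $U'\overset{\iota}{\lra}W$ of $U'$ (again $W\in\W$, being an extension of two objects of $\U$), and form the homotopy pushout of $\gamma$ and $\iota$; its edges yield (\ref{diag_reflection}), with $X^+\in\W*\V[1]=\C^+$, $U_X,U'_X\in\U$, and $\beta_X$ a left $\C^+$-approximation by the argument dual to the previous paragraph. I expect the approximation step to be the only real obstacle: one cannot shortcut it by claiming $\Hom(\C^-,\V[1])=0$, since that would require the higher vanishing $\mathbb{E}(\U,\V[1])=0$, which is not among the hypotheses; the point of gluing $\beta$ with $\epsilon$ (rather than using $V'$ by itself) is precisely that it routes $f$ through the $\W$-term of the defining triangle of $Y$ and through the \emph{left $\V$-approximation} $\beta$ of $X$, so that only the right $\U$-approximation property of $\epsilon$ is needed.
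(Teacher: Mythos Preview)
Your construction is correct and essentially identical to the paper's: both take the cotorsion-pair decomposition $U'[-1]\to X\to V'\to U'$, then a second decomposition of $V'$ (resp.\ $U'$) yielding a $\W$-term, and glue them via a homotopy pullback (resp.\ pushout), which is exactly the octahedral diagram the paper records. Your write-up is in fact more complete, since you supply the verification that $\alpha_X$ is a right $\C^-$-approximation, whereas the paper only recalls the construction and defers that property to the cited source.
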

\begin{proof}
For later use, we recall a construction of a reflection triangle (\ref{diag_reflection}) of $X\in\C$.
Since $(\U,\V)$ is a cotorsion pair, we have a triangle $V'_X\lra U'_X\lra X\lra V'_X[1]$ with $U'_X\in\U, V'_X\in\V$.
Similarly, $U'_X\in\cone(\V[-1],\U[-1])$ shows the existence of a triangle $W_X[-1]\lra U_X[-1]\lra U'_X\lra W_X$ with $U_X\in\U, W_X\in\V$.
Note that the extension-closedness of $\U$ forces $W_X\in\W$.
Applying the octahedral axiom to the composable morphisms $U_X[-1]\to U'_X\to X$, we have the following commutative diagram made of triangles.
\begin{equation}\label{diag_const_coref}
\xymatrix{
&U_X[-1]\ar@{=}[r]\ar[d]&U_X[-1]\ar[d]&\\
V'_X\ar@{=}[d]\ar[r]&U'_X\ar[d]\ar[r]^{g'_X}\ar@{}[rd]|{\rm (wPO)}&X\ar[d]^{\beta_X}\ar[r]^{h'_X}&V'_X[1]\ar@{=}[d]\\
V'_X\ar[r]&W_X\ar[d]\ar[r]_{g_X}&X^+\ar[d]\ar[r]_{h_X}&V'_X[1]\\
&U_X\ar@{=}[r]&U_X&
}
\end{equation}
Then the third column is a reflection triangle.
 A construction of a coreflection triangle (\ref{diag_coreflection}) will be needed.
To construct a coreflection triangle, we use the dual of (\ref{diag_const_coref}) as follows.
\begin{equation}\label{diag_const_ref}
\xymatrix{
&V_X\ar@{=}[r]\ar[d]&V_X\ar[d]&\\
U'_X[-1]\ar@{=}[d]\ar[r]^{a_X}&X^-\ar[d]_{\alpha_X}\ar[r]\ar@{}[rd]|{\rm (wPB)}&W_X\ar[d]\ar[r]&U'_X\ar@{=}[d]\\
U'_X[-1]\ar[r]^{a'_X}&X\ar[d]\ar[r]&V'_X\ar[d]\ar[r]&U'_X\\
&V_X[1]\ar@{=}[r]&V_X[1]&
}
\end{equation}
The second column is a coreflection triangle of $X$.
\end{proof}

By a closer look at the above, it follows that the correspondence $X\mapsto X^+$ gives rise to a left adjoint $L$ of the inclusion $\C^+/[\W]\hookrightarrow\C/[\W]$.
Dually, a right adjoint $R$ of the inclusion $\C^-/[\W]\hookrightarrow\C/[\W]$ exists.
Moreover, the following holds.

\begin{lemma}\cite[Lem. 4.2]{AN12}\label{lem_AN}
The left adjoint $L\colon \C/[\W]\to\C^+/[\W]$ restricts to a functor $L \colon \C^-/[\W]\to\H/[\W]$.
Dually, the right adjoint $R\colon \C/[\W]\to\C^-/[\W]$ restricts to a functor $R \colon \C^+/[\W]\to\H/[\W]$.
Furthermore, there exists a natural isomorphism $\eta\colon LR\xto{\sim}RL$.
\end{lemma}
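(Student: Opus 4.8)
The plan is to deduce the three assertions from the explicit coreflection and reflection triangles of Lemma~\ref{lem_coreflection}, constructed through the octahedral axiom in diagrams~(\ref{diag_const_coref}) and~(\ref{diag_const_ref}), together with the closure of $\U$, $\V$, $\W$ and $\C^{\pm}$ under extensions and shifts that is forced by the cotorsion pair axioms $\mathbb{E}(\U,\V)=0$ and $\cone(\V,\U)=\C=\cocone(\V,\U)$.

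First I would show that $R$ sends $\C^{+}/[\W]$ into $\H/[\W]$. Given $X\in\C^{+}$, form its coreflection triangle $V_X\lra X^-\overset{\alpha_X}{\lra}X\lra V_X[1]$ as in~(\ref{diag_const_ref}); since $\alpha_X$ is the counit, $RX=X^-\in\C^-$ holds by construction, so it only remains to prove $X^-\in\C^{+}$, hence $X^-\in\C^{+}\cap\C^-=\H$. For this I would combine the above triangle with a triangle $W_0\lra X\lra V_0[1]\lra W_0[1]$ witnessing $X\in\C^{+}=\W*\V[1]$, note that $W_0\in\W\subseteq\C^-$ so that $W_0\to X$ factors through the right $\C^-$-approximation $\alpha_X$, and then chase the octahedral axiom applied to this factorization, using the closure of $\V$, $\W$ and $\C^{\pm}$ under extensions and shifts, to read off a presentation $X^-\in\W*\V[1]$. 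Since $R$ is already a functor on all of $\C/[\W]$, its restriction to the full subcategory $\C^{+}/[\W]$ is functorial for free, giving $R\colon\C^{+}/[\W]\to\H/[\W]$; the dual argument, using reflection triangles and~(\ref{diag_const_coref}), gives $L\colon\C^-/[\W]\to\H/[\W]$.

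For the natural isomorphism $\eta\colon LR\xto{\sim}RL$, the point is that both composites are two incarnations of the same ``double truncation'' $\C/[\W]\to\H/[\W]$. For $X\in\C$ I would build the tower consisting of a coreflection triangle of $X$ followed by a reflection triangle of $X^-$ (so that the outcome is $LRX$), and symmetrically the tower of a reflection triangle of $X$ followed by a coreflection triangle of $X^+$ (outcome $RLX$); iterating the octahedral axiom across each tower expresses $LRX$ and $RLX$ through a single triangle involving $X$, a $\V$-object and a $\U$-object. The weak universal properties of the (co)reflections — i.e. the units and counits of the two adjunctions, interpreted in $\C/[\W]$ — then furnish comparison morphisms $LRX\to RLX$ and $RLX\to LRX$, and the uniqueness clauses of the adjunctions make them natural in $X$ and mutually inverse.

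The step I expect to be the main obstacle is the construction of $\eta$: assembling the two octahedral towers and checking that the comparison morphisms are genuinely inverse calls for a careful diagram chase inside the additive quotient $\C/[\W]$, where one must keep invoking that the approximation maps are only weakly universal and that every morphism factoring through $\W$ dies. By comparison, verifying that the restricted functors land in $\H/[\W]$ is routine once the extension- and shift-closure properties of $\C^{\pm}$, $\V$ and $\W$ are in place.
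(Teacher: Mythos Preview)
The paper does not prove this lemma at all: it is quoted from \cite[Lem.~4.2]{AN12} and stated without argument, so there is no in-paper proof to compare your proposal against. Your outline---verifying that the (co)reflection of an object of $\C^{\pm}$ lands in $\H$ by combining the explicit triangles of Lemma~\ref{lem_coreflection} with the octahedral axiom and the extension-closure of $\U,\V,\W,\C^{\pm}$, and then producing the comparison $LR\xto{\sim}RL$ from the (weak) universal properties of the approximation maps in $\C/[\W]$---is essentially the route taken in the cited Abe--Nakaoka paper, so as a reconstruction the plan is reasonable.

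One caveat on notation: in your sketch you take $RX=X^{-}$, but the paragraph immediately preceding the lemma in this paper declares $R$ to be the functor $X\mapsto X^{+}$ (and $L$ to be $X\mapsto X^{-}$). The paper's labelling here is somewhat at odds with the usual left/right-adjoint conventions and arguably with the wording of the lemma itself, so before writing out the details you should fix once and for all which of $X\mapsto X^{+}$ and $X\mapsto X^{-}$ you are calling $R$ and $L$, and make sure the domains of the restricted functors in your argument match the statement you are proving.
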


We turn our attention to the following cohomological functor.

\begin{theorem}\cite[Thm. 6.4]{Nak11}\cite[Thm. 5.7]{AN12}\label{thm:AN}
The heart $\H/[\W]$ is abelian.
Moreover, the functor $H:=LR\pi\colon \C\to\H/[\W]$ is cohomological.
\end{theorem}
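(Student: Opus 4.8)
The plan is to recognize Theorem~\ref{thm:AN} as a special case of Corollary~\ref{cor_abelian}, applied to the cohomological functor $H=LR\pi\colon\C\to\H/[\W]$ (once we know it is cohomological) and the subcategory $\N:=\Ker H$. More precisely, the strategy splits into two halves: first identify $\N$ explicitly and verify that it satisfies the generating condition $\cone(\N,\N)=\C$, and second identify the localization $\widetilde{\C}_\N$ with the heart $\H/[\W]$ so that abelianness and cohomologicality follow from Corollary~\ref{cor_abelian} together with Lemma~\ref{lem_relation_relative_str}.

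First I would pin down $\N$. Using the reflection/coreflection triangles of Lemma~\ref{lem_coreflection} and the description of $R$ and $L$ in Lemma~\ref{lem_AN}, one checks that $H(X)=0$ precisely when $X^+\in\W$ (equivalently $X\in\C^-$ in $\C/[\W]$) and $X^-\in\W$, which by a short diagram chase with (\ref{diag_const_coref}) and (\ref{diag_const_ref}) amounts to $X\in\U[-1]*\W$ and $X\in\W*\V[1]$ being contradicted—concretely, $\N=\U[1]*\W\cap\W*\V[-1]$, or more usefully $\N$ consists of the objects killed by $LR\pi$, which one shows is extension-closed (this is essentially in \cite{AN12}). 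The generating condition $\cone(\N,\N)=\C$ should then follow by combining a reflection triangle and a coreflection triangle: given $X\in\C$, the object $X^-$ lies in $\C^-$ and $V_X\in\V\subseteq\N$, and iterating the reflection construction on $X^-$ produces the required presentation $N\to N'\to X\rightsquigarrow$ with $N,N'\in\N$. I would also verify the hypothesis ``$\overline f=0\iff H(f)=0$'' of Lemma~\ref{lem_relation_relative_str}, namely that $[\N]=\Ker H$ as an ideal; one direction is clear and the other uses that $\pi$ kills $\W$ together with the approximation properties of $\alpha_X,\beta_X$.

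Granting these, Corollary~\ref{cor_abelian}(1) gives that $(\widetilde{\C}_\N,\widetilde{\mathbb{E}}_\N,\widetilde{\mathfrak{s}}_\N)$ is an abelian exact category, and Corollary~\ref{cor_abelian}(2) gives that $Q\colon(\C,\mathbb{E},\mathfrak{s})\to\widetilde{\C}_\N$ is cohomological. Lemma~\ref{lem_relation_relative_str} then yields a faithful exact functor $\widetilde H\colon\widetilde{\C}_\N\to\Ab$ with $H=\widetilde H\circ Q$. The last task is the identification $\widetilde{\C}_\N\simeq\H/[\W]$: I would construct mutually inverse functors. In one direction, $H=LR\pi$ inverts every $s\in\Sn$ (since $H(s)$ is an isomorphism in $\H/[\W]$ by Lemma~\ref{lem_relation_relative_str}(1) and the definition of $\Sn$), so by the universal property of $Q$ it factors as $\H/[\W]$-valued functor $\bar H\colon\widetilde{\C}_\N\to\H/[\W]$. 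In the other direction, the inclusion $\H\hookrightarrow\C$ composed with $Q$ kills $[\W]$ (as $\W\subseteq\N$) hence induces $\H/[\W]\to\widetilde{\C}_\N$; one checks via the adjunction isomorphisms of Lemmas~\ref{lem_AN} and the coreflection triangles that these two functors are quasi-inverse, the key point being that for $X\in\H$ the morphisms $\alpha_X$ and $\beta_X$ become isomorphisms in $\widetilde{\C}_\N$ because their cones/cocones lie in $\N$. Transporting the abelian structure across this equivalence recovers Theorem~\ref{thm:AN}.

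\textbf{Main obstacle.} The delicate step is establishing $\cone(\N,\N)=\C$ and, hand in hand with it, the precise description of $\N$ and the equivalence $\widetilde{\C}_\N\simeq\H/[\W]$: one must show that a single reflection-then-coreflection (or a bounded iteration thereof) suffices to write every object as a cone of two objects in $\Ker(LR\pi)$, and that no further layers are needed—this is where the cotorsion-pair axioms $\cone(\V,\U)=\C=\cocone(\V,\U)$ and the extension-closedness of $\U$ and $\V$ are used essentially. Once that structural fact is in place, abelianness and cohomologicality are formal consequences of the machinery of Section~\ref{sec_Exact}.
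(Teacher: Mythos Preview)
The paper does not prove Theorem~\ref{thm:AN} at all: it is quoted from \cite{Nak11} and \cite{AN12} as an input, and no proof environment follows the statement. The paper's own contribution is the \emph{converse} direction of what you attempt, namely Theorem~\ref{thm_heart_construction}: assuming Theorem~\ref{thm:AN}, one identifies the heart with the extriangulated localization $\widetilde{\C}_\N$. So there is no ``paper's own proof'' to compare your proposal with.

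That said, your plan has a genuine circularity you half-acknowledge in the parenthetical ``(once we know it is cohomological)''. You set $\N:=\Ker H$ and then want to apply Lemma~\ref{lem_relation_relative_str} and the machinery of Section~\ref{sec_Exact}; but Lemma~\ref{lem_relation_relative_str} lives in Subsection~5.1, whose standing hypothesis is that $H$ is cohomological, and the very fact that $\N=\Ker H$ is extension-closed is typically deduced from cohomologicality. Likewise, showing that $H$ inverts every $s\in\Sn$ (needed for your factorisation $\bar H$) is immediate \emph{if} $H$ is cohomological, but is exactly what must be proved otherwise. The paper avoids this loop by importing Theorem~\ref{thm:AN} from \cite{Nak11,AN12} and only then running Proposition~\ref{prop_kernel_heart} and Lemma~\ref{lem_relation_relative_str}.

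If you want a self-contained derivation in the spirit you sketch, you must cut the loop at the start: define $\N:=\add(\U*\V)$ directly (not as $\Ker H$), prove by hand from the cotorsion-pair axioms that $\N$ is extension-closed and that $\cone(\N,\N)=\C$, and then show that $\alpha_X$ and $\beta_X$ lie in $\Sn$ (which follows since their connecting morphisms factor through $\V\subseteq\N$ and $\U\subseteq\N$ respectively). Corollary~\ref{cor_abelian} then gives that $\widetilde{\C}_\N$ is abelian and $Q$ is cohomological; the equivalence $\H/[\W]\simeq\widetilde{\C}_\N$ must be established directly (essential surjectivity via $X\cong (X^-)^+$ in $\widetilde{\C}_\N$, full faithfulness via Lemma~\ref{lem_description_morph}), \emph{without} invoking that $\H/[\W]$ is abelian or that $H$ is cohomological. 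Only at the very end do you read off that $H\cong Q$ is cohomological and $\H/[\W]$ is abelian. Your outline contains most of these ingredients, but as written the logical order leans on the conclusion.
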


In the rest, we shall show that the cohomological functor $H$ can be regarded as the localization of $\C$ with respect to $\N:=\Ker H$ the resulting category $\widetilde{\C}_\N$ of which corresponds to an abelian category.
The following sharpens the above theorem in the sense that it explains the universality of $H$ and how abelian exact structure of  the heart inherits from the given triangulated category $\C$.

\begin{theorem}\label{thm_heart_construction}
Let $H\colon \C\to\H/[\W]$ be the cohomological functor associated to a cotorsion pair $(\U,\V)$ in $\C$.
Then the following assertions hold.
\begin{enumerate}
\item[\textnormal{(1)}]
The functor $H$ gives rise to an exact functor $(H,\phi)\colon (\C,\mathbb{E}_\N,\mathfrak{s}_\N)\to\H/[\W]$.
\item[\textnormal{(2)}]
The localization $(\widetilde{\C}_\N,\widetilde{\mathbb{E}}_\N,\widetilde{\mathfrak{s}}_\N)$ of $\C$ with respect to $\N$ corresponds to an abelian category.
\item[\textnormal{(3)}]
The induced unique functor $(\widetilde{H},\widetilde{\phi})\colon (\widetilde{\C}_\N,\widetilde{\mathbb{E}}_\N,\widetilde{\mathfrak{s}}_\N)\to\H/[\W]$ is an exact equivalence.
\end{enumerate}
\end{theorem}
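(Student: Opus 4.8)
The plan is to verify that $\N := \Ker H$ satisfies the hypothesis $\cone(\N,\N)=\C$ of the earlier machinery, so that Corollary \ref{cor_abelian} applies and yields both (1) and (2) at once, and then to identify the induced functor $\widetilde H$ with the heart functor $LR\pi$ via Lemma \ref{lem_relation_relative_str}. For (1) and (2): first I would check that $\N$ is extension-closed (indeed, since $H$ is cohomological, for a triangle $A\to B\to C\to A[1]$ with $A,C\in\N$ one has an exact sequence $H(A)\to H(B)\to H(C)$ in the abelian category $\H/[\W]$, forcing $H(B)=0$), and closed under direct summands and isomorphisms, which is immediate. The key point is $\cone(\N,\N)=\C$: given $X\in\C$, I would use the reflection and coreflection triangles of Lemma \ref{lem_coreflection} together with the construction diagrams (\ref{diag_const_coref}), (\ref{diag_const_ref}), noting that objects of $\U$ and of $\V$ lie in $\N$ (since $R\pi$ kills $\V$-objects up to $\W$ and $L$ kills $\U$-objects up to $\W$, one checks $H(\U)=0=H(\V)$), and then observing that the triangle $V'_X\to U'_X\to X\to V'_X[1]$ exhibits $X$ as a cone of a morphism between objects of $\V$ and $\U$, both in $\N$; hence $X\in\cone(\N,\N)$. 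With $\cone(\N,\N)=\C$ established, Corollary \ref{cor_abelian}(1) gives that $(\widetilde{\C}_\N,\widetilde{\mathbb{E}}_\N,\widetilde{\mathfrak{s}}_\N)$ corresponds to an abelian exact category, which is (2), and Corollary \ref{cor_abelian}(2) gives that $Q:(\C,\mathbb{E},\mathfrak{s})\to\widetilde{\C}_\N$ is cohomological; since $H$ is exact on $(\C,\mathbb{E}_\N,\mathfrak{s}_\N)$ by construction (it is $H = LR\pi$ and one verifies exactness as in Theorem \ref{thm:AN}), this is (1).

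For (3) I would apply Lemma \ref{lem_relation_relative_str}: the two hypotheses there are $\cone(\N,\N)=\C$, just proved, and the equivalence "$\overline f = 0 \iff H(f)=0$" for $f\in\Mor\C$. The direction $\overline f = 0 \Rightarrow H(f)=0$ is clear since $H$ factors through $\C/[\W]$ and any $\W$-factoring morphism has $\overline f = 0$ already in $\C/[\N]$ (as $\W\subseteq\N$), so $H(f)=0$. For the converse, $H(f)=0$ should force $f$ to factor through $\N$: if $f:A\to B$ has $H(f)=0$, I would use the adjunction structure of Lemma \ref{lem_AN} (the counit/unit maps $X^-\to X\to X^+$ and the description of $H$ as $LR\pi$) to see that $\pi(f)$ composed with the relevant (co)reflection morphisms is zero in $\H/[\W]$, hence $f$ factors through the kernel object of one of the defining triangles, which lies in $\U$ or $\V$, i.e.\ in $\N$. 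Once Lemma \ref{lem_relation_relative_str} applies, it yields that $(\C,\mathbb{E}_\N,\mathfrak{s}_\N)=(\C,\mathbb{E}_H,\mathfrak{s}_H)$ and that the induced functor $(\widetilde H,\widetilde\phi):\widetilde{\C}_\N\to\H/[\W]$ is a faithful exact functor. It remains to show $\widetilde H$ is full and essentially surjective. Essential surjectivity is easy: every object of $\H/[\W]$ is $\pi(h)$ for some $h\in\H$, and $H(h)\cong h$ in $\H/[\W]$ via the natural isomorphism $\eta:LR\cong RL$ of Lemma \ref{lem_AN} restricted to $\H$, so $\widetilde H(Qh)\cong \pi(h)$.

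The main obstacle is fullness of $\widetilde H$ — equivalently, showing every morphism $\pi(h)\to\pi(h')$ in the heart lifts to a morphism $Qh\to Qh'$ in $\widetilde{\C}_\N$. Here I would exploit the explicit roof description of morphisms in $\widetilde{\C}_\N$ recorded after Proposition \ref{prop_localization}: a morphism $Qh\to Qh'$ is a roof $h\xrightarrow{a} Y \xleftarrow{s} h'$ with $s\in\Sn$, i.e.\ $\overline s$ invertible after inverting $\Sn$. Given $\gamma:\pi(h)\to\pi(h')$ in $\H/[\W]$, I would represent $\gamma$ by an actual morphism in $\C/[\W]$, lift it to $\C$, and then use the reflection/coreflection constructions to replace it by a roof landing in $\H$; the compatibility needed is exactly that $H(s)$ is an isomorphism for $s\in\Sn$ (part (1) of Lemma \ref{lem_relation_relative_str}), so that $\widetilde H$ of the constructed roof recovers $\gamma$. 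Combining faithful, full and essentially surjective, $\widetilde H$ is an equivalence, and since both sides are exact (abelian) categories and $\widetilde H$ is exact with exact quasi-inverse (built from the same adjunctions), it is an exact equivalence in the sense of Definition \ref{Def_exact_functor}(4), giving (3).
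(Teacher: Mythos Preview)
Your overall strategy matches the paper's: verify $\cone(\N,\N)=\C$ and the condition ``$H(f)=0\iff \overline f=0$'' so that Lemma~\ref{lem_relation_relative_str} applies, then check fullness and essential surjectivity of $\widetilde H$. Your argument for $\cone(\N,\N)=\C$ via $\U,\V\subseteq\N$ and the cotorsion triangle $V'_X\to U'_X\to X\to V'_X[1]$ is correct and in fact more direct than the paper's route (which cites the identification $\N=\add(\U*\V)$ from \cite{LN19}).

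There is, however, a genuine gap in the converse implication ``$H(f)=0\Rightarrow f$ factors through $\N$''. Your sketch (``$f$ factors through the kernel object of one of the defining triangles, which lies in $\U$ or $\V$'') does not work: the (co)cones in the (co)reflection triangles of $A$ and $B$ are not what $f$ factors through when $H(f)=0$; what vanishes is $f^\pm:=(f^-)^+$ in $\H/[\W]$, and one must climb back from this to $f$ itself. The paper does this in two nontrivial steps. First (Lemma~\ref{lem_coreflection2}): if $f^\pm$ factors through $\W$, then $f^-$ factors through an object of $\U$, using that $h_B\circ f^+=0$ forces $h'_B\circ f=0$ in diagram~(\ref{diag_const_coref}), whence $f$ factors through $U'_B$. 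Second (proof of Proposition~\ref{prop_kernel_heart}(2)): from $f^-$ factoring through $\U$ one deduces that $f\circ a'_A$ factors through $\U$, and a homotopy pushout then exhibits $f$ as factoring through an object of $\U*\V\subseteq\N$. Neither step is a one-liner, and in particular the object through which $f$ factors is typically in $\U*\V$, not in $\U$ or $\V$ alone.

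For fullness, your idea is right but incomplete. The paper makes it precise via Lemma~\ref{lem_description_morph}: any $\gamma:H(X)\to H(Y)$ in $\H/[\W]$ can be written as $H(\beta_Y)^{-1}\circ H(c)\circ H(\alpha_X)^{-1}$ for some $c:X^-\to Y^+$ in $\C$, and since the (co)reflection maps $\alpha_X,\beta_Y$ lie in $\Sn$ (their (co)cones are in $\V,\U\subseteq\N$), the roof $Q(\beta_Y)^{-1}\circ Q(c)\circ Q(\alpha_X)^{-1}$ is a morphism in $\widetilde\C_\N$ mapping to $\gamma$ under $\widetilde H$. You should invoke this description explicitly rather than leave the lifting unspecified.
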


To prove the theorem, we reveal some properties of $\N$.
Let us begin with a bit more observations on (co)reflection triangles.

\begin{lemma}\label{lem_coreflection2}
Let $f\colon A\to B$ be a morphism in $\C$ and consider reflection triangles (\ref{diag_reflection}) of $A$ and $B$.
If the induced morphism $f^+\colon A^+\to B^+$ factors through an object in $\W$, then $f$ factors through an object in $\U$.
\end{lemma}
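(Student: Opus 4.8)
The plan is to reduce the lemma to the single implication: \emph{if $\beta_B\circ f$ factors through an object of $\U$, then so does $f$}. The hypothesis provides the premise. Indeed, by Lemma~\ref{lem_coreflection} the morphism $\beta_A\colon A\to A^+$ is a left $\C^+$-approximation and $B^+\in\C^+$, so the induced morphism $f^+$ can be chosen with $\beta_B\circ f=f^+\circ\beta_A$; by hypothesis $f^+$ factors through an object $W\in\W\subseteq\U$, say $f^+=q\circ p$ with $p\colon A^+\to W$ and $q\colon W\to B^+$, whence $\beta_B\circ f=q\circ\rho$ where $\rho:=p\circ\beta_A\colon A\to W$ factors through $W\in\U$.

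To deduce the implication I would use the precise shape of the reflection triangle of $B$ furnished by the construction in~\eqref{diag_const_coref}: a triangle $U_B[-1]\xrightarrow{a'}B\xrightarrow{\beta_B}B^+\xrightarrow{b'}U_B$ whose first morphism factors as $a'=g'_B\circ e_0$ with $e_0\colon U_B[-1]\to U'_B$, $g'_B\colon U'_B\to B$ and $U'_B\in\U$. First, take the weak pullback of $\beta_B$ along $q$; it yields a triangle $U_B[-1]\xrightarrow{\iota}E\xrightarrow{\beta'}W\to U_B$ together with a morphism $k\colon E\to B$ satisfying $\beta_B\circ k=q\circ\beta'$ and $k\circ\iota=a'$. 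The commutative square $\beta_B\circ f=q\circ\rho$ and the weak universal property of the pullback then produce $z\colon A\to E$ with $k\circ z=f$. Second, take the weak pushout of $\iota$ along $e_0$; it yields a triangle $U'_B\xrightarrow{\iota'}E'\xrightarrow{\beta''}W\to U'_B[1]$ together with a morphism $\ell\colon E\to E'$. Since $U'_B\in\U$, $W\in\W\subseteq\U$ and $\U$ is extension-closed, we get $E'\in\U$. Finally, the equality $k\circ\iota=a'=g'_B\circ e_0$ and the weak universal property of the pushout produce $k'\colon E'\to B$ with $k'\circ\ell=k$; hence $f=k\circ z=k'\circ(\ell\circ z)$ factors through $E'\in\U$, which finishes the proof.

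The step I expect to be the main obstacle is choosing the right ``cancellation device'' for $\beta_B$: one cannot simply lift $q\colon W\to B^+$ along $\beta_B$, because the obstruction $b'\circ q\in\C(W,U_B)$ need not vanish. The resolution is not to lift $q$ but to transport $z\colon A\to E$ along a pushout that replaces the object $U_B[-1]$ occurring in the pullback triangle of $E$ — which is only known to be a shift of a $\U$-object — by the genuine $\U$-object $U'_B$; this is legitimate precisely because the connecting morphism $a'$ of the reflection triangle factors through $U'_B\in\U$. Everything else is routine bookkeeping with weak pullbacks and pushouts, together with the extension-closedness of $\U$ (a standard fact for cotorsion pairs, already invoked in the proof of Lemma~\ref{lem_coreflection}), which is what places $E'$ in $\U$.
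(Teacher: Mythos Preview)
Your proof is correct, but the paper's argument is considerably shorter and exploits a different piece of the construction~\eqref{diag_const_coref}. Rather than working with the reflection triangle $U_B[-1]\to B\xrightarrow{\beta_B}B^+\to U_B$ itself, the paper uses the \emph{row} triangle $V'_B\to U'_B\xrightarrow{g'_B}B\xrightarrow{h'_B}V'_B[1]$ and the third-row morphism $h_B\colon B^+\to V'_B[1]$. Since $f^+$ factors through some $W\in\W\subseteq\U$ and $V'_B\in\V$, the cotorsion-pair vanishing $\C(W,V'_B[1])=\mathbb{E}(\U,\V)=0$ forces $h_B\circ f^+=0$; hence $h'_B\circ f=h_B\circ\beta_B\circ f=h_B\circ f^+\circ\beta_A=0$, and $f$ factors through $g'_B$, i.e.\ through $U'_B\in\U$ directly. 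Your route---pullback along $q$, pushout along $e_0$, extension-closedness of $\U$---builds a new object $E'\in\U$ instead of landing in the pre-existing $U'_B$; it trades the single vanishing $\mathbb{E}(\U,\V)=0$ for two diagram manipulations and the (equivalent) fact that $\U$ is extension-closed. Both are valid; the paper's is a two-line computation, yours is a robust diagram chase that does not need to locate $h_B$ inside~\eqref{diag_const_coref}.
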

\begin{proof}
We consider commutative diagrams (\ref{diag_const_coref}) for $X=A,B$.
Since $\beta_X$ is a left $(\C^+)$-approximation of $X$, we get a commutative diagram below.
\[
\xymatrix{
A\ar[r]^{\beta_A}\ar[d]_{f}&A^+\ar[d]^{f^+}\\
B\ar[r]^{\beta_B}&B^+
}
\]
By the assumption, $f^+$ factors through $W\in\W$ and $h_B\circ f^+=0$.
Thus we have $h'_B\circ f=h_B\circ \beta_B\circ f=h_B\circ f^+\circ \beta_A=0$.
Hence, the second row of (\ref{diag_const_coref}) shows that $f$ factors through $U'_B\in\U$.
\end{proof}

The kernel of $H$ has the following nice properties which permit us to use Lemma \ref{lem_relation_relative_str}.

\begin{proposition}\label{prop_kernel_heart}
The following hold for the kernel $\N:=\Ker H$.
\begin{enumerate}
\item[\textnormal{(1)}]
$\N=\add(\U*\V)$ is true. In particular, $\cone(\N,\N)=\C$ holds.
\item[\textnormal{(2)}]
Let $f\in\Mor\C$ be given.
Then $H(f)=0$ if and only if $f$ factors through an object in $\N$.
\end{enumerate}
\end{proposition}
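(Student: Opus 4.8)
The plan is to establish (1) and (2) in tandem, deriving the harder inclusion $\Ker H\subseteq\N$ of (1) from the forward half of (2). First I would settle the easy containment $\add(\U*\V)\subseteq\Ker H$: since $H$ is additive, $\Ker H$ is closed under direct summands, and since $H$ is cohomological (Theorem~\ref{thm:AN}), a triangle $A\to B\to C\to A[1]$ with $H(A)=H(C)=0$ forces $H(B)=0$, so $\Ker H$ is extension-closed; as every object of $\U*\V$ is an extension of a $\V$-object by a $\U$-object, it suffices to show $\U,\V\subseteq\Ker H$. For $U\in\U$ one uses $\mathbb{E}(\U,\V)=0$ together with $\C^+=\W*\V[1]$: applying $\C(U,-)$ to a triangle $W\to C\to V[1]\to W[1]$ shows every morphism $U\to C$ with $C\in\C^+$ factors through $\W$, so the reflection $R\pi U$ vanishes in $\C/[\W]$ and $H(U)=LR\pi U=0$; dually, $\C^-=\U[-1]*\W$ and $\mathbb{E}(\U,\V)=0$ force $L\pi V=0$, hence $H(V)=0$. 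Thus $\N\subseteq\Ker H$, and since $\U,\V\subseteq\N$ and $\C=\cone(\V,\U)$ one gets $\cone(\N,\N)=\C$. The backward implication of (2) is then immediate: if $f$ factors through $N\in\N$, then $H(f)$ factors through $H(N)=0$.

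The substance of the statement is the forward implication of (2): $H(f)=0$ implies that $f$ factors through an object of $\N$. Write $\alpha_X\colon X^-\to X$ and $\beta_X\colon X\to X^+$ for the coreflection and reflection morphisms, and use $H\cong RL\pi\cong LR\pi$ from Lemma~\ref{lem_AN}. By Lemma~\ref{lem_coreflection2}, a morphism $g$ with $R\pi g=0$ factors through $\U$ (the converse holding because $U^+=0$ in $\C/[\W]$ for $U\in\U$, as shown above), and dually $L\pi g=0$ forces $g$ to factor through $\V$. Hence $H(f)=0$ yields, on the one hand, that $f\alpha_A$ factors through some $U_0\in\U$, and on the other that $\beta_B f$ factors through some object of $\V$. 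Fixing a factorization $f\alpha_A=g_2g_1$ with $g_1\colon A^-\to U_0$ and $g_2\colon U_0\to B$, and forming the homotopy pushout of $g_1$ along $\alpha_A$, one obtains an object $Z$ together with a morphism $Z\to B$ whose composite with the canonical map $A\to Z$ equals $f$; moreover $Z$ sits in a triangle $U_0\to Z\to V_A[1]\to U_0[1]$, where $V_A$ is the cocone term of the coreflection triangle $V_A\to A^-\xrightarrow{\alpha_A}A\to V_A[1]$.

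Since $\Ker H$ is extension-closed and $U_0\in\U\subseteq\Ker H$, to conclude $Z\in\Ker H=\N$ (and hence that $f$ factors through $Z\in\N$) it suffices to prove the sub-claim $V_A[1]\in\Ker H$. Here I would use the finer description of the coreflection triangle coming from the construction in Lemma~\ref{lem_coreflection}, diagram~\eqref{diag_const_ref}: $V_A[1]$ is the cone of a morphism $W_A\to V'_A$ with $W_A\in\W$ and $V'_A\in\V$, both already known to lie in $\Ker H$, and one applies the long exact $H$-sequence to this triangle (using $\W[1]\subseteq\C^+$ and the vanishings $H(\W)=H(\V)=0$) to see that the remaining term dies, so $H(V_A[1])=0$. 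Granting this, $Z\in\Ker H$, so $f$ factors through $\N$; the missing inclusion of (1) then follows by applying (2) to $\id_X$: if $H(X)=0$ then $\id_X$ factors through $\N$, so $X$ is a direct summand of an object of $\N$, whence $X\in\add\N=\N$. The main obstacle is precisely this sub-claim $V_A[1]\in\Ker H$: because a cotorsion pair satisfies $\U*\V[1]=\C$, cones and homotopy pushouts built from $\U$- and $\V$-objects range over all of $\C$, so membership in $\N$ cannot be read off from memberships in $\U,\V$ alone — the argument must genuinely exploit the extra structure of the objects $V_X$ (and dually $U_X$) produced by the reflection/coreflection constructions.
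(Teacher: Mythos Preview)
Your overall strategy is close to the paper's, and the reduction of (1) to (2) via $\id_X$ is fine, but the argument for the sub-claim $V_A[1]\in\Ker H$ does not work as written. From the triangle $W_A\to V'_A\to V_A[1]\to W_A[1]$ the long exact $H$-sequence only gives an injection $H(V_A[1])\hookrightarrow H(W_A[1])$, so you would need $H(W_A[1])=0$; but this fails in general. In the cluster-tilting case $(\U,\V)=(\U,\U)$ one has $\W=\U$ and $H=\pi\colon\C\to\C/[\U]$, so $H(W[1])=\pi(W[1])$ is typically nonzero (since $\U[1]\not\subseteq\U$). The observation $\W[1]\subseteq\C^+$ does not help here: $\H\subseteq\C^+$ as well, and $H$ is certainly nonzero on $\H$. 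Thus the containment $\V*\W[1]\subseteq\Ker H$ you are implicitly using is false, and the specific object $V_A[1]$ must be handled by something finer than its membership in $\V*\W[1]$; you flag this yourself, but the proposed fix does not close the gap.

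The paper sidesteps the issue by changing the leg of the pushout. Instead of pushing $g_1\colon A^-\to U_0$ out along $\alpha_A$ (cone $V_A[1]$), it first notes that $f\circ a'_A=\alpha_B\circ f^-\circ a_A$ also factors through some $U\in\U$, where $a'_A\colon U'_A[-1]\to A$ is the morphism in the third row of diagram~\eqref{diag_const_ref}, whose cone is $V'_A\in\V$. Pushing $a'_A$ out along the map $U'_A[-1]\to U$ yields a triangle $U\to K\to V'_A\to U[1]$, so $K\in\U*\V$ on the nose, and the weak pushout property gives a factorization of $f$ through $K$. No statement about $V_A[1]$ is needed. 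If you want to salvage your route, replace $\alpha_A$ by $a'_A$ at the pushout step; everything you wrote up to that point (in particular that $f\alpha_A$, hence $f a'_A$, factors through $\U$) then feeds directly into the paper's conclusion.
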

\begin{proof}
The assertion (1) is proved in \cite[Cor. 3.8]{LN19}.
To prove the assertion (2), let $f\colon A\to B$ be a morphism in $\C$.
Since the `if' part is obvious, to prove the `only if' part, we assume $H(f)=0$.
By taking coreflection triangles of $A, B$ and reflection triangles of $A^-, B^-$ successively, we get the following commutative squares $(*)$ and $(**)$:
\[
\xymatrix{
A\ar@{}[rd]|{(*)}\ar[d]_f&A^-\ar@{}[rd]|{(**)}\ar[d]_{f^-}\ar[r]^{\beta_{A^-}}\ar[l]_{\alpha_A}&A^\pm\ar[d]^{f^\pm}\\
B&B^-\ar[r]^{\beta_{B^-}}\ar[l]_{\alpha_B}&B^\pm
}
\]
where we abbreviate the symbol $({X^-})^+$ by $X^\pm$.
By definition of $H$, $f^\pm$ factors through an object in $\W$.
Thanks to Lemma \ref{lem_coreflection2}, $f^-$ factors through an object $U\in\U$.
We consider the diagrams (\ref{diag_const_ref}) for $X=A,B$ and notice that $f\circ a'_A=f\circ \alpha_A \circ a_A=\alpha_B\circ f^-\circ a_A$ is also factors through $U$, say $f\circ a'_A\colon U'_A[-1]\xto{b}U\xto{c}B$.
Taking a homotopy pushout of $a'_A$ along $b$ yields the following morphism of triangles.
\[
\xymatrix{
U'_A[-1]\ar[r]^{a'_A}\ar[d]_{b}\ar@{}[rd]|{\rm (wPO)}&A\ar[r]\ar[d]&V'_A\ar[r]\ar@{=}[d]&U'_A\ar[d]\\
U\ar[r]&K\ar[r]&V'_A\ar[r]&U[1]
}
\]
The commutative square corresponding to $c\circ b=f\circ a'_A$ shows that $f$ factors through $K\in\U*\V$.
\end{proof}

The following description of morphisms in the heart $\H/[\W]$ is fundamental, e.g., \cite[Cor. 3.11]{Oga22}.

\begin{lemma}\label{lem_description_morph}
Consider a morphism $\gamma\colon H(X)\to H(Y)$ in $\H/[\W]$.
We also let $\alpha_X\colon X^-\to X$ and $\beta_Y\colon Y\to Y^+$ be a right $\C^-$-approximation of $X$ and a left $\C^+$-approximation of $Y$ in (\ref{diag_coreflection}) and (\ref{diag_reflection}), respectively.
Then, there exists a morphism $c\colon X^-\to Y^+$ in $\C$ such that $\gamma=H(\beta_Y)^{-1}\circ H(c)\circ H(\alpha_X)^{-1}$.
\end{lemma}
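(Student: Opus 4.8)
The plan is to reduce the statement, by means of the adjoint functors of Lemma~\ref{lem_AN}, to the lifting of a single heart morphism along $H$, and then to obtain that lift by a diagram chase with the (co)reflection triangles of Lemma~\ref{lem_coreflection}. First I would record that $H$ sends the approximation morphisms $\alpha_X\colon X^-\to X$ and $\beta_Y\colon Y\to Y^+$ to isomorphisms. After applying $\pi$, these become respectively a counit of the coreflection into $\C^-/[\W]$ and a unit of the reflection into $\C^+/[\W]$ which underlie the functors $L$ and $R$ --- this is built into the construction behind Lemma~\ref{lem_AN}. Since a (co)reflector inverts its own (co)unit and $H=LR\pi$ is assembled from these two (co)reflectors, using $\eta\colon LR\xrightarrow{\sim}RL$ to exchange their order when needed, both $H(\alpha_X)$ and $H(\beta_Y)$ are invertible; in particular the right-hand side of the asserted formula makes sense.

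Granting this, put $\delta:=H(\beta_Y)\circ\gamma\circ H(\alpha_X)\colon H(X^-)\to H(Y^+)$; it then suffices to produce a morphism $c\colon X^-\to Y^+$ in $\C$ with $H(c)=\delta$, for then $\gamma=H(\beta_Y)^{-1}\circ H(c)\circ H(\alpha_X)^{-1}$, which is the claim. So the remaining task is the surjectivity of $c\mapsto H(c)\colon\C(X^-,Y^+)\to(\H/[\W])(H(X^-),H(Y^+))$. Here one uses that $X^-\in\C^-$ and $Y^+\in\C^+$: then $\pi(X^-)$ and $\pi(Y^+)$ are connected to $H(X^-)$ and $H(Y^+)$ through the unit and counit morphisms of the adjunctions of Lemma~\ref{lem_AN}, which run along the reflection triangle of $X^-$ and the coreflection triangle of $Y^+$ and which induce isomorphisms on the heart by the first paragraph; chasing these morphisms rewrites an arbitrary $\delta$ as $H$ applied to an honest morphism $X^-\to Y^+$ of $\C$. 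This is exactly the homset computation carried out in \cite[Cor.~2.8]{Oga22} (see also \cite{AN12}), which one may instead cite directly.

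I expect this surjectivity to be the main obstacle. Its subtlety is that the needed vanishings do not come from $X^-$ or $Y^+$ lying in $\Ker H$ --- they do not --- but from the structural triangles $\U[-1]\to X^-\to\W\to\U$ and $\W\to Y^+\to\V[1]\to\W[1]$ together with $\mathbb{E}(\U,\V)=0$, which force the relevant comparison morphisms to factor through $\W$ and hence to vanish in $\C/[\W]$. The work lies in bookkeeping the $[\W]$-quotient while performing these factorizations, not in any conceptual difficulty.
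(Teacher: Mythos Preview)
Your proposal is correct, and in fact it goes further than the paper does: the paper gives no proof of this lemma at all, merely stating it with the parenthetical reference \cite[Cor.~2.8]{Oga22}. Your outline---reducing to the surjectivity of $\C(X^-,Y^+)\to(\H/[\W])(H(X^-),H(Y^+))$ via the invertibility of $H(\alpha_X)$ and $H(\beta_Y)$, and then invoking the same external result---is sound, and your explicit suggestion to cite \cite[Cor.~2.8]{Oga22} directly is exactly the route the paper takes.
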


Now we are in position to prove Theorem \ref{thm_heart_construction}.

\begin{proof}[Proof of Theorem \ref{thm_heart_construction}]
Due to Proposition \ref{prop_kernel_heart}, Lemma \ref{lem_relation_relative_str} shows that the relative extriangulated structure $(\C,\mathbb{E}_\N,\mathfrak{s}_\N)$ with respect to $\N=\Ker H$ coincides with $(\C,\mathbb{E}_H,\mathfrak{s}_H)$ determined by $H$.
Since $\cone(\N,\N)=\C$ is true, the extriangulated category $(\widetilde{\C}_\N,\widetilde{\mathbb{E}}_\N,\widetilde{\mathfrak{s}}_\N)$ corresponds to an abelian category by Corollary \ref{cor_abelian}(1).
Thus, by Lemma \ref{lem_relation_relative_str}, the unique exact functor $(\widetilde{H},\widetilde{\phi})\colon (\widetilde{\C}_\N,\widetilde{\mathbb{E}}_\N,\widetilde{\mathfrak{s}}_\N)\to\H/[\W]$ is faithful.
The functor $H\colon \C\to\H/[\W]$ is essentially surjective, so is $\widetilde{H}$.
To show the fullness of $\widetilde{H}$, we consider a morphism $\gamma\colon HA\to HB$ in $\H/[\W]$.
By Lemma \ref{lem_description_morph}, considering a coreflection triangle (\ref{diag_coreflection}) of $A$ and a reflection triangle (\ref{diag_reflection}) of $B$, we have $\gamma=H(\beta_B)^{-1}\circ H(f) \circ H(\alpha_A)^{-1}$ for some morphism $f\colon A^-\to B^+$.
Since $\alpha_A, \beta_B\in\Sn$, the morphism $Q(\beta_B)^{-1}\circ Q(f) \circ Q(\alpha_A)^{-1}$ exists and corresponds to $\gamma$ through $\widetilde{H}$.
We conclude that $\widetilde{H}$ is an equivalence.
Since the structure $(\widetilde{\C}_\N,\widetilde{\mathbb{E}}_\N,\widetilde{\mathfrak{s}}_\N)$ corresponds to an abelian exact structure and $\H/[\W]$ is abelian, we have thus obtained a desired exact equivalence $(\widetilde{H},\widetilde{\phi})$.
\end{proof}

\subsection{Examples}
Let us demonstrate how our localization relates to some known constructions of cohomological functors.
Throughout this subsection, $\C=(\C,\mathbb{E},\mathfrak{s})$ is a triangulated category.

\subsubsection{Cluster tilting subcategory}
Let us consider a cluster tilting subcategory $\X\subseteq\C$, namely, the pair $(\X,\X)$ forms a cotorsion pair.
The following well-known results are due to \cite{KZ08, KR07}:
\begin{enumerate}
\item[{\rm (a)}]
The ideal quotient $\C/[\X]$ is abelian; and
\item[{\rm (b)}]
The natural functor $H\colon \C\to\C/[\X]$ is cohomological.
\end{enumerate}
These phenomena can be understood through our extriangulated localization as follows.
Since $\X$ is extension-closed, putting $\N= \X$, we have the extriangulated localization $(Q,\mu)\colon (\C,\mathbb{E}_\N,\mathfrak{s}_\N)\to (\widetilde{\C}_\N,\widetilde{\mathbb{E}}_\N,\widetilde{\mathfrak{s}}_\N)$ by Theorem \ref{thm_main1}.
Also, since $\N$ is rigid, we have a natural equivalence $\C/[\N]\xto{\sim}\widetilde{\C}_\N$ and $Q$ is just the ideal quotient by Corollary \ref{cor_localization_rigid}.
Lastly, since $\cone(\N,\N)=\C$ holds in the triangulated category $\C$, Corollary \ref{cor_abelian}(1) shows that $(\widetilde{\C}_\N,\widetilde{\mathbb{E}}_\N,\widetilde{\mathfrak{s}}_\N)$ corresponds to an abelian category, which explains the first item (a).
Again, Corollary \ref{cor_abelian}(2) says that the functor $Q$ induces a cohomological functor $Q\colon (\C,\mathbb{E},\mathfrak{s})\to \widetilde{\C}_\N$ which coincides with the one in the second item (b).

\subsubsection{Rigid subcategory}
As a generalized setup of the above, we consider a contravariantly finite and rigid subcategory $\X\subseteq\C$ which is closed under taking direct summands.
Let $k$ be a field.
We moreover assume that $\C$ is a $k$-linear $\Hom$-finite Krull-Schmidt triangulated category with a Serre functor.
Put $\N=\Ker(\X,-)$.
Buan-Marsh investigated a natural cohomological functor $(\X,-)\colon \C\to\mod\X$ in terms of Gabriel-Zisman localization passing to the ideal quotient $p\colon \C\to\C/[\N]=\overline{\C}$.
In fact, they noticed that the functor $(\X,-)$ is factored as follows,
\begin{equation}\label{Buan-Marsh's_localization}
\xymatrix{
\C\ar[rd]_{p}\ar[rr]^{(\X,-)}&&\mod\X\\
&\overline{\C}\ar[ru]_{\mathsf{Loc}}&
}
\end{equation}
where $\mathsf{Loc}$ is induced by the universality of $\overline{\C}$.
We focus on the following assertion which is a part of \cite[Thm. 5.1]{BM13b} (see also \cite[Thm. 4.6]{Bel13}).

\begin{proposition}\label{prop_BM}
Put $\mathscr{S}$ to be the class of morphisms $s$ in $\C$ with $(\X,s)$ being an isomorphism in $\mod\X$.
Then the natural functor $\mathsf{Loc}$ is realized as a Gabriel-Zisman localization for the multiplicative system $\overline{\mathscr{S}}\subseteq\Mor\overline{\C}$.
\end{proposition}

We shall prove Proposition \ref{prop_BM} in the context of our extriangulated localization to clarify their connection.
Note that $\cone(\N,\N)=\C$ holds in $(\C,\mathbb{E},\mathfrak{s})$.
Thus, similarly to the cluster tilting case,
the extension-closed subcategory $\N$ induces the extriangulated localization $(Q,\mu)\colon (\C,\mathbb{E}_\N,\mathfrak{s}_\N)\to (\widetilde{\C}_\N,\widetilde{\mathbb{E}}_\N,\widetilde{\mathfrak{s}}_\N)$ with $\widetilde{\C}_\N$ abelian.
Remind our construction of $\widetilde{\C}_\N$: it is defined as a certain Gabriel-Zisman localization by passing to the ideal quotient $\overline{\C}$.
More precisely, it can be described as follows,
\begin{equation}
\xymatrix{
\C\ar[rd]_{p}\ar[rr]^{Q\quad\quad}&&\widetilde{\C}_\N=\overline{\C}[\overline{\Sn}^{-1}]\\
&\overline{\C}\ar[ru]_{\overline{Q}}&
}
\end{equation}
where $\overline{Q}$ denotes the localization at $\overline{\Sn}$.
It suffices to show that $\widetilde{\C}_\N$ is equivalent to $\mod\X$ in a natural way.
Actually, if this is true, we have $\mathscr{S}=\Sn$ by the saturatedness of $\Sn$, see Proposition \ref{prop_saturated}.
Also, we know $\overline{\Sn}$ is a multiplicative system by (MR2) in Theorem \ref{Thm_Mult_Loc}.

\begin{claim}
There exists an exact equivalence $F\colon \widetilde{\C}_\N\to\mod\X$ with $(\X,-)=F\circ Q$.
\end{claim}
\begin{proof}
Recall from Corollary \ref{cor_abelian} that $Q$ induces a cohomological functor $Q\colon \C\to\widetilde{\C}_\N$ satisfying a universality.
As $\N=\Ker(\X,-)$, the universality of $Q$ guarantees the existence of an exact functor $F\colon \widetilde{\C}_\N\to\mod\X$ with $(\X,-)=F\circ Q$.
Lastly, we shall show that $F$ is dense and fully faithful.

To show the density of $F$, let $M$ be an object in $\mod\X$ with a projective presentation $\X(-,X_1)\xto{f\circ -}\X(-,X_0)\to M\to 0$.
The morphism $f$ is completed into a triangle $X_1\overset{f}{\lra}X_0\overset{g}{\lra}C(f)\overset{h}{\lra}X_1[1]$ in $\C$.
Since $\X$ is rigid, we have an isomorphism $M\cong (\X,C(f))$ in $\mod\X$.

To show the faithfulness, we consider a morphism $QX_1\xto{\alpha}QX_0$ in $\widetilde{\C}_\N$ with $F(\alpha)=0$.
Remind that $\alpha$ is represented by a roof diagram $X_1\xto{f}X'_0\xleftarrow{s}X_0$ in $\C$ with $s\in\Sn$.
Both the cone and the cocone of $s$ factors through an object in $\N$, so $(\X,s)$ is an isomorphism.
Thus, we have $F(\alpha)=(\X,s)^{-1}(\X,f)=0$ and $(\X,f)=0$.
Since $\X$ is contravariantly finite and rigid, $f$ factors through an object in $\N$, which shows $\alpha=(Qs)^{-1}Qf=0$.

It remains to check the fullness. Fix objects $C, C'$ in $\C$ and consider $M=(\X,C), M'=(\X,C')$ in $\mod\X$.
We also consider a morphism $\alpha\colon M\to M'$.
It is obvious that $M$ (resp. $M'$) has a projective presentation.
However, we give an explicit construction for later use.
By considering a right $\X$-approximation of $C$, we have a triangle $N_0[-1]\overset{a_0}{\lra} X_0\overset{b_0}{\lra} C\lra N_0$ with $X_0\in\X$ and $N_0\in\N$.
Also, for the object $N_0[-1]$, we have a triangle $N_1[-1]\overset{a_1}{\lra} X_1\overset{b_1}{\lra} N_0[-1]\lra N_1$ with $X_1\in\X$ and $N_1\in\N$ in the same manner.
Applying the octahedral axiom to $a_0b_1$, we have the following commutative diagram of triangles.
\begin{equation}
\xymatrix{
&&N_1\ar@{=}[r]\ar[d]&N_1\ar[d]^{a_1[1]}\\
X_1\ar[r]^{a_0b_1}\ar[d]_{b_1}&X_0\ar[r]\ar@{=}[d]&D\ar[r]\ar[d]^s&X_1[1]\ar[d]\\
N_0[-1]\ar[r]^{a_0}&X_0\ar[r]^{b_0}&C\ar[r]\ar[d]&N_0\ar[d]\\
&&N_1[1]\ar@{=}[r]&N_1[1]
}
\end{equation}
Note that the third column yields an isomorphism $(\X,s)\colon (\X,D)\xto{\sim}(\X,C)$ and $s\in\Sn$.
Hence the second row gives rise to a projective presentation $(\X,X_1)\to (\X,X_0)\to M\to 0$.
Similarly, we have a projective presentation $(\X,X'_1)\to (\X,X'_0)\to M'\to 0$ for $M'$ via a morphism $D'\xto{s'}C'$ in $\Sn$.
Since the given morphism $M\xto{\alpha}M'$ extends to a morphism of their projective presentations,
via the Yoneda functor, we have a morphism of triangles as below.
\[
\xymatrix{
X_1\ar[r]^{a_0b_1}\ar[d]&X_0\ar[r]\ar[d]&D\ar[r]\ar[d]^f&X_1[1]\ar[d]\\
X'_1\ar[r]^{a'_0b'_1}&X'_0\ar[r]&D'\ar[r]&X'_1[1]
}
\]
The morphisms $C\xleftarrow{s}D\xto{f}D'\xto{s'}C'$ obtained above gives rise to a desired morphism $(Qs')(Qf)(Qs)^{-1}$ in $\widetilde{\C}_\N$.
In fact, we have $\alpha=(\X,s')(\X,f)(\X,s)^{-1}$.
\end{proof}

\medskip
\noindent
{\bf Acknowledgements.}
The author is grateful for Hiroyuki Nakaoka, Arashi Sakai an Shunya Saito for their interest and valuable comments.
He also would like to show his gratitude to the anonymous referee who found and corrected a mistake in the preliminary version.
This work is supported by JSPS KAKENHI Grant Number JP22K13893.

%
%
%
%


\end{document}